\DeclareMathOperator{\Cl}{Cl}
\DeclareMathOperator{\Norm}{Norm}
\DeclareMathOperator{\Ker}{Ker}
\DeclareMathOperator{\Img}{Image}
\newcommand{\vv}{\mathbf{v}}
\DeclareMathOperator{\norm}{Norm}
\DeclareMathOperator{\ord}{ord}
\newcommand{\Q}{\mathbb{Q}}
\newcommand{\Z}{\mathbb{Z}}
\newcommand{\C}{{\mathbb C}}
\newcommand{\R}{\mathbb{R}}
\newcommand{\F}{{\mathbb F}}
\newcommand{\cU}{\mathcal{U}}
\newcommand{\cA}{\mathcal{A}}
\newcommand{\cD}{\mathcal{D}}
\newcommand{\cB}{\mathcal{B}}
\newcommand{\cK}{\mathcal{K}}
\newcommand{\cL}{\mathcal{L}}
\newcommand{\cM}{\mathcal{M}}
\newcommand{\cP}{\mathcal{P}}
\newcommand{\fQ}{\mathfrak{Q}}
\newcommand{\cT}{\mathcal{T}}
\newcommand{\cE}{\mathcal{E}}
\newcommand{\OO}{{\mathcal O}}
\newcommand{\ga}{{\mathfrak{a}}}
\newcommand{\sS}{\mathfrak{S}}
\newcommand{\fa}{\mathfrak{a}}
\newcommand{\fb}{\mathfrak{b}}
\newcommand{\fc}{\mathfrak{c}}
\newcommand{\fp}{\mathfrak{p}}
\newcommand{\fq}{\mathfrak{q}}
\newcommand{\mP}{\mathfrak{P}}
\newcommand{\xx}{\mathbf{x}}
\newcommand{\zz}{\mathbf{z}}
\newcommand{\hh}{\mathbf{h}}
\newcommand{\ww}{\mathbf{w}}
\newcommand{\uu}{\mathbf{u}}
\newcommand{\bfl}{\mathbf{l}}
\newcommand{\bb}{\mathbf{b}}
\newcommand{\aaf}{\mathbf{a}}
\newcommand{\cc}{\mathbf{c}}
\newcommand{\mm}{\mathbf{m}}
\newcommand{\nn}{\mathbf{n}}
\newcommand{\rr}{\mathbf{r}}
\newcommand{\fA}{\mathfrak{A}}
\newcommand{\fB}{\mathfrak{B}}
\newcommand\Tstrut{\rule{0pt}{6ex}}         % = `top' strut
\newcommand\Bstrut{\rule[-5ex]{0pt}{0pt}}   % = `bottom' strut
\newcommand{\edit}[1]{\textcolor{red}{#1}}
\newcommand{\noedit}[1]{\textcolor{blue}{#1}}
\begin{document}

\newtheorem{thm}{Theorem}[section]
\newtheorem{proc}[thm]{Procedure}
\newtheorem{alg}[thm]{Algorithm}
\newtheorem{lem}[thm]{Lemma}
\newtheorem{prop}[thm]{Proposition}
\newtheorem{cor}[thm]{Corollary}
\newtheorem*{conj}{Conjecture}

\theoremstyle{definition}
\newtheorem{defn}[thm]{Definition}
\newtheorem{example}[thm]{Example}

\theoremstyle{remark}
\newtheorem{rem}[thm]{Remark}

\newtheorem{ack}{Acknowledgement}

\title[]{Efficient Resolution of Thue--Mahler Equations}

\author{Adela Gherga}
\address{Tutte Institute for Mathematics and Computing, Ottawa, Ontario, Canada}
\email{adela.gherga@cse-cst.gc.ca}
\author{Samir Siksek}
\address{Mathematics Institute, University of Warwick, Coventry CV4 7AL, United Kingdom}
\email{s.siksek@warwick.ac.uk}
\thanks{
The authors are supported by the
EPSRC grant \emph{Moduli of Elliptic curves and Classical Diophantine Problems}
(EP/S031537/1).
}

\date{\today}

\keywords{Thue equation, Thue--Mahler equation, LLL,
linear form in logarithms}

\makeatletter
\@namedef{subjclassname@2020}{%
  \textup{2020} Mathematics Subject Classification}
\makeatother

\subjclass[2020]{Primary 11D59, Secondary 11D61}

\begin{abstract}
A Thue-Mahler equation is a Diophantine equation of the form
\[
	F(X,Y) = a\cdot p_1^{z_1}\cdots p_v^{z_v}, \qquad \gcd(X,Y)=1
\]
where $F$ is an irreducible binary form of degree at least $3$ with integer coefficients, $a$ is a non-zero integer and $p_1, \dots, p_v$ are rational primes.  Existing algorithms for resolving such equations require computations in the field $L=\Q(\theta,\theta^\prime,\theta^{\prime\prime})$, where $\theta$, $\theta^\prime$, $\theta^{\prime\prime}$ are distinct roots of $F(X,1)=0$. We give a new algorithm that requires computations only in $K=\Q(\theta)$ making it far more suited for higher degree examples.  We also introduce a lattice sieving technique reminiscent of the Mordell--Weil sieve that makes it practical to tackle Thue--Mahler equations of higher degree and with larger sets of primes than was previously possible. We give several examples including one of degree $11$.

Let $P(m)$ denote the largest prime divisor of an integer $m \ge 2$.  As an application of our algorithm we determine all pairs $(X,Y)$ of coprime non-negative integers such that $P(X^4-2Y^4) \le 100$, finding that there are precisely $49$ such pairs.
\end {abstract}
\maketitle

%--------------------------------------------------------------------------------------------%
%--------------------------------------------------------------------------------------------%

\section{Introduction} \label{sec:intro}

Let
\begin{equation}\label{eqn:F}
  F(X,Y)=a_0 X^d+a_1 X^{d-1} Y+ \cdots + a_d Y^d
\end{equation}
be a binary form of degree $d \ge 3$ with coefficients $a_i \in
\Z$. Suppose $F$ is irreducible over $\mathbb{Q}$. Let $a$ be a non-zero integer and let
$p_1,\dotsc,p_v$ be distinct primes such that $p_i \nmid a$. The purpose of
this paper is to give an efficient algorithm to solve the Thue--Mahler equation
\begin{equation}\label{eqn:preTM}
  F(X,Y)=a \cdot
  p_1^{z_1} \cdots p_v^{z_v}, \qquad
  X,~Y \in \Z, \;
  \gcd(X,Y)=1, %=\gcd(a_0,Y)=1,
\end{equation}
for unknown integers $X,Y$, and unknown
non-negative integers $z_1, \dots, z_v$. The set of solutions
is known to be finite by a famous result of Mahler \cite{Mahler}
which extends classical work of Thue \cite{Thue}.
Mahler's theorem is ineffective. The first effective bounds
on the size of the solutions are due to
Vinogradov and Sprind\v{z}uk \cite{VinogradovSprindzuk}
and to
Coates \cite{CoatesTM}.
Vastly improved effective bounds have since been given by
Bugeaud and Gy\H{o}ry \cite{BGTM}. Evertse \cite[Corollary 2]{EvertseTM}
showed that the number of solutions to \eqref{eqn:preTM} is at most $2 \times 7^{d^3(2v+3)}$.
For $d \ge 6$, this has been improved by Bombieri \cite[Main Theorem]{BombieriTM}
who showed that the number of solutions is at most $16 (v+1)^2 \cdot (4d)^{26(v+1)}$.

%\newpage
%bigskip

Besides being of independent interest, Thue--Mahler equations frequently arise
in a number of contexts:
\begin{itemize}
\item The problem of determining all elliptic curves over
$\Q$ with good reduction outside a given set of primes algorithmically
reduces to the problem of solving certain cubic Thue--Mahler equations (here
cubic means $d=3$).
The earliest example appears to be due to
Agrawal, Coates, Hunt, and van der
              Poorten \cite{AgrawalCoates} who used it to
determine all elliptic curves over $\Q$ of conductor $11$.
The recent paper of Bennett, Gherga and Rechnitzer \cite{BeGhRe} gives
a systematic and general treatment of this approach.
In fact, the link between cubic Thue--Mahler
equations and elliptic curves can be used in conjunction
with modularity of elliptic curves to give an algorithm
for solving cubic Thue--Mahler equations as in the
work of von K\"{a}nel and Matschke \cite[Section 5]{KanelMatschke}
and of Kim \cite{KimTM}.
\item Many Diophantine problems naturally reduce
to the resolution of Thue--Mahler equations. These include
the Lebesgue--Nagell equations (e.g.\ \cite{Cangul}, \cite{SoTz}),
and Goormaghtigh's equation (e.g.\ \cite{BeGhKr}). The most striking
of such applications is the reduction, due to Bennett and Dahmen \cite{BennettDahmen},
of asymptotic cubic superelliptic equations to cubic Thue--Mahler equations, via the modularity of
Galois representation attached to elliptic curves.
\end{itemize}

\medskip

Before the current paper, the only general algorithm for solving
Thue--Mahler equations was the one due to Tzanakis and de Weger \cite{TW}.
A modern implementation of this algorithm, due Hambrook \cite{Hambrook},
has been profitably used to solve a number of low degree Thue--Mahler
equations, for example in \cite{Cangul}, \cite{SoTz}.

\medskip

Instead of \eqref{eqn:preTM}, we consider the equation
\begin{equation}\label{eqn:TM}
  F(X,Y)=a \cdot
  p_1^{z_1} \cdots p_v^{z_v}, \qquad
  X,~Y \in \Z, \;
  \gcd(X,Y)=\gcd(a_0,Y)=1.
\end{equation}
Thus we have added the assumption $\gcd(a_0,Y)=1$,
where $a_0$ is the leading coefficient of $F$
as in \eqref{eqn:F}.
This is a standard computational simplification
in the subject, and is also applied by Tzanakis and de Weger.
There is no loss of generality in adding
this assumption in the following sense:
an algorithm for solving \eqref{eqn:TM}
yields an algorithm for solving \eqref{eqn:preTM}.
To see this, let $(X,Y)$ be a solution to \eqref{eqn:preTM}
and let $b=\gcd(a_0,Y)$.
Write $Y=bY^\prime$ with $Y^\prime \in \Z$.
The possible values for $b$ are the divisors of $a_0$;
for each divisor $b$ we need to solve
$F(X,bY^\prime)=a \cdot p_1^{z_1} \cdots p_v^{z_v}$.
Note that $F(X,bY^\prime)=b G(X,Y^\prime)$
where $G$ has integral coefficients
and leading coefficient $a_0^\prime=a_0/b$,
which satisfies $\gcd(a_0^\prime,Y^\prime)=1$.
The equation $b G(X,Y^\prime)=a \cdot p_1^{z_1} \cdots p_v^{z_v}$
is impossible unless $b/\gcd(a,b)=p_1^{w_1} \cdots p_v^{w_v}$
where $w_i \ge 0$, in which case
\[
G(X,Y^\prime)=(a/\gcd(a,b)) \cdot p_1^{z_1-w_1} \cdots p_v^{z_v-w_v}
\]
which is now a Thue--Mahler equation of the form \eqref{eqn:TM}.

\medskip

The approach of Tzanakis and de Weger can be summarized as follows.
\begin{enumerate}[(I)]
\item First \eqref{eqn:TM} is reduced to a number of ideal equations:
\begin{equation}\label{eqn:ideal}
(a_0 X- \theta Y)\OO_K=\fa \cdot \fp_1^{n_1}\cdots \fp_s^{n_s}.
\end{equation}
Here $\theta$ is a root of the monic polynomial $a_0^{d-1} F(X/a_0,1)$,
and $K=\Q(\theta)$. Moreover, $\fa$ is a fixed ideal of the ring
of integers $\OO_K$, and $\fp_1,\dotsc,\fp_s$ are fixed prime ideals of $\OO_K$.
The variables $X$, $Y$, $n_1,\dotsc,n_s$ represent the unknowns.
\item Next, each ideal equation \eqref{eqn:ideal} is reduced
to a number of equations of the form
\begin{equation}\label{eqn:unitequation}
a_0 X-\theta Y = \tau \cdot \delta_1^{b_1} \cdots \delta_r^{b_r}
\end{equation}
where $\tau$, $\delta_1,\cdots,\delta_r \in K^{\times}$ are fixed and $X$, $Y$, $b_1,\dotsc,b_r$
are unknowns.
\item The next step generates a very large upper bound
for the exponents $b_1,\dotsc,b_r$
using the theory of real, complex, and $p$-adic linear forms in
logarithms. This bound is then considerably reduced using the LLL algorithm
\cite{LLL} applied to approximation lattices associated to these linear forms,
and finally, all solutions below this reduced bound are found using the
algorithm of Fincke and Pohst \cite{FinckePohst}.
\end{enumerate}
To compute these approximation lattices alluded to in step (III), the algorithm of Tzanakis and de Weger
relies on extensive computations in the number field $K^\prime=\Q(\theta_1,\theta_2,\theta_3)$
where $\theta_1$, $\theta_2$, $\theta_3$ are distinct roots of $a_0^{d-1} F(X/a_0,1)$,
as well as $\fp$-adic completions of $K^\prime$.
The field $K^\prime$ typically has degree $d(d-1)(d-2)$,
making their algorithm impractical if the degree
$d$ is large. Even if the degree $d$ is small (say $d=3$), we have
found that the Tzanakis-de Weger algorithm runs into a combinatorial
explosion of cases in step (I) if the number of primes $v$ is large,
and in step (II) if the class number $h$ of $K$ is large.

%For reasons explained in due course
%that algorithm also runs into a combinatorial explosion of cases if the number of primes $v$ is large, or if the
%class number of the number field $\Q(\theta_1)$ is large.
%In particular, the
%algorithm reduces the problem of resolving \eqref{eqn:TM} to resolving a number
%of associated ideal equations, each of which is then reduced to a number of
%equations in the associated number field of $F$. The reduction process
%described in the previous paragraph is then applied to each such equation. As
%we shall see, depending on $F$, one may have as few as \edit{[small example]}
%equations to apply this process to, to as many as \edit{[example]}, the largest
%of which we have seen in the degree $3$ case so far.

In this paper, we present an algorithm that builds on many of the
powerful ideas in the paper of Tzanakis and de Weger but
avoids computations in number fields
other than the field $K=\Q(\theta)$ of degree $d$, and avoids all
computations in $p$-adic fields or their extensions. The algorithm includes
a number of refinements that circumvent the explosion of cases in steps (I) and (II).
For example, to each ideal equation \eqref{eqn:ideal} we associate at most one
equation \eqref{eqn:unitequation}; by contrast, the algorithm of Tzanakis
and de Weger, typically associates $h^{s-1}$ equations \eqref{eqn:unitequation}
to each ideal equation \eqref{eqn:ideal}, where $h$ is the class number of $K$.
%Moreover, we considerably
%improve upon the Prime Ideal Removing Lemma of Tzanakis and de Weger, the
%process which reduces the Thue--Mahler equation to a number of ideal equations.
%We further provide an improvement to the process by which we convert each such
%ideal equation into a number of equations in $K$.
Moreover, inspired by the Mordell--Weil sieve (e.g.\ 
\cite{BruinStollMWI}, 
\cite{BruinStollMWII},
\cite{IntegralHyp}), we introduce
a powerful \lq\lq Dirichlet sieve\rq\rq\ that vastly improves the determination of the solutions
to \eqref{eqn:unitequation} after the LLL step, even if the
remaining bound on the exponents $b_i$ is large.

\begin{comment}
There are very few examples in the literature of explicit resolution of
Thue--Mahler equations of degree greater than $3$. Although the method of
Tzanakis and de Weger has been known for $30$ years, the first instance of an
explicit resolution of a quintic equation was only given in \cite{SoTz} in
$2016$. Indeed, the worked example in \cite{TW} is a cubic; the available
software at the time being the limiting factor for resolving higher-degree
equations. In \cite{Cangul}, I.N. Cangul, M. Demirci, G. Soydan and N. Tzanakis
find all solutions of a quartic Thue--Mahler equation. In \cite{BennettDahmen},
M. Bennett and S. R. Dahmen implement the method of \cite{TW} to solve special
classes of cubic Thue--Mahler equations, arising from their study of
generalized superelliptic equations, and in \cite{KimTM}, D. Kim exploits the
modularity of elliptic curves over $\Q$ to resolve cubic Thue--Mahler
equations. To compute elliptic curves over $\Q$, M. Bennett, A. Gherga and A.
Rechnitzer \cite{BeGhRe} solve large families of degree $3$ Thue--Mahler
equations using the automated, self-contained Thue--Mahler solver presented in
the M.Sc. thesis of K. Hambrook \cite{Hambrook}. This implementation follows
closely the algorithm of Tzanakis and de Weger, but includes certain
improvements to decrease run-time. Finally, in their study of Goormaghtigh's
equation, M. Bennett, A. Gherga and R. Kreso \cite{BeGhKr} modify and sharpen
the algorithm of \cite{Hambrook} to resolve $719$ quintic Thue--Mahler
equations.
\end{comment}

We have implemented the algorithm described in this paper in the computer algebra
system \texttt{Magma} %\footnote{\texttt{Magma} V2.26-10}
\cite{Magma}; our implementation is available from \newline
{\small \url{https://github.com/adelagherga/ThueMahler/tree/master/Code/TMSolver}}.

\bigskip

Below we give four examples of Thue--Mahler equations solved using our
implementation.
Our solutions will always be subject to the assumptions
\[
	\gcd(X,Y)=\gcd(a_0,Y)=1.
\]
They will be given
in the form $[X,Y,z_1,z_2,\dotsc,z_v]$.
We will revisit these examples later on in the paper to
illustrate the differences between our algorithm and that of
Tzanakis and de Weger \cite{TW}.
We do point out that a number of recent papers also
make use of our implementation, or the ideas in the present
paper, to solve various Thue--Mahler equations
where the degree $d$, or the number of primes $v$ are large. For example in
\cite{BeGhKr}, \cite{BGPS}, \cite{BeSiNew}, \cite{BeSiLN}, our algorithm
is used to solve Thue--Mahler equations of degrees $5$, $20$, $7$ and $11$ respectively.

%With the exception of Example 1,

%\edit{(Compare the examples below to the TdW algorithm -- need to run the algorithm and get timings, etc.)}

%--------------------------------------------------------------------------------------------%
\begin{example}\label{ex:Ex1}
%\subsection*{Example 1}\label{subsec:Ex1}
An ongoing large-scale computational project, led by Bennett, Cremona, Gherga and Sutherland,
aims to provably compute all elliptic curves of conductor at most $10^6$. The method combines the approach in \cite{BeGhRe} with our Thue--Mahler solver
described in the current paper. We give an example to illustrate
this application.
Consider the problem of computing all elliptic curves $E/\Q$
with trivial $2$-torsion and conductor
\[
	771456 \; = \; 2^7 \cdot 3 \cdot 7^2 \cdot 41.
\]
Applying Theorem 1 of \cite{BeGhRe}  results in $13$ cubic Thue--Mahler
equations of the form
\[
a_0 X^3+a_1 X^2 Y+a_2 XY^2+a_3 Y^3=
3^{z_1} \cdot  7^{z_2}\cdot 41^{z_3},
        \qquad \gcd(X,Y)=1,
\]
whose resolution algorithmically yields the desired set
of elliptic curves. The coefficients for these
$13$ forms are as follows:
\[\begin{array}{c|c}
  (a_0,a_1,a_2,a_3) & (a_0,a_1,a_2,a_3) \\ \hline
  (1,7,2,-2) & (2,1,0,3) \\
  (1,4,3,6) & (3,4,4,4) \\
  (4,4,6,3) & (2,5,0,6) \\
  (1,7,4,12) & (3,3,-1,7) \\
  (3,7,14,14) & (1,3,17,43) \\
  (8,12,13,8) & (4,1,12,-6) \\
  (3,9,5,19) &
\end{array}\]
Our implementation solved all $13$ of these Thue--Mahler equations and
computed the corresponding elliptic curves in a total of $5.4$ minutes
on a single core. Full computational details, as well as a list of all corresponding elliptic curves can be found at \newline
{\small \url{https://github.com/adelagherga/ThueMahler/tree/master/GhSiData/Example1}}.

For illustration, we consider one of these $13$ Thue--Mahler equations:
\[
	4X^3+ X^2 Y + 12X Y^2 - 6Y^3= 3^{z_1} \cdot  7^{z_2}\cdot 41^{z_3},
	\qquad \gcd(X,Y)=1.
\]
Our implementation solved this in $41$ seconds.
The solutions are
\begin{gather*}
  [ -3, -7, 1, 0, 1 ],\quad
  [ -1, -5, 2, 2, 0 ],\quad
  [ 1, -1, 1, 1, 0 ], \\
  [ 3, 1, 1, 2, 0 ], \quad
  [ 5, 11, 0, 2, 0 ],\quad
  [ 9, 17, 1, 2, 1 ],\quad
  [ 19, 23, 5, 3, 0 ].
\end{gather*}
Of the seven solutions, only
$(X,Y) = (1,-1)$ gives rise to elliptic curves of conductor $771456$: \[E_1:
y^2 = x^3 - x^2 +  13655x + 2351833,\]
\[E_2: y^2 = x^3 - x^2 +  3414x - 295686.\]
\end{example}
%--------------------------------------------------------------------------------------------%

\begin{example}\label{ex:Ex2}
Consider the Thue--Mahler equation
\begin{equation}\label{eqn:TMBad2}
7 X^3 + X^2 Y + 29 X Y^2 - 25 Y^3 \; = \; 2^{z_1}\cdot 3^{z_2} \cdot 17^{z_3}
\cdot 37^{z_4} \cdot 53^{z_5} \, .
\end{equation}
This in fact is one of the Thue--Mahler equations whose resolution, via the
method of \cite{BeGhRe}, is needed
to determine all elliptic curves of conductor $2^{\alpha}\cdot 3^{\beta}\cdot 17 \cdot 37 \cdot 53$, where $\alpha \in \{2,3,4,6,7\}$ and $\beta \in \{1,2\}$.
The class number of the cubic field associated to the cubic form in \eqref{eqn:TMBad2}
is $33$. As we shall see later (at the end of Section~\ref{sec:making-ideals-princ}),
our approach to dealing with the class group requires us to solve only $30$
equations of the form \eqref{eqn:unitequation}, whereas in comparison
the method of Tzanakis and de Weger requires the resolution of approximately $80990$
equations of the form \eqref{eqn:unitequation}. For now, we merely point out
that our implementation solved \eqref{eqn:TMBad2} in $2$ minutes.
The solutions are
%With the Fincke and Pohst
%this maxed the memory after two weeks.
\begin{gather*}
  [ 19, -23, 2, 4, 0, 1, 1 ],\quad
  [ 13, -6, 0, 0, 1, 1, 1 ],\quad
  [ -343, -463, 2, 11, 1, 0, 0 ],\\
  [ 79, -8, 0, 2, 2, 2, 0 ],\quad
  [ 37, -13, 2, 1, 1, 0, 2 ],\quad
  [ 1, 1, 2, 1, 0, 0, 0 ],\\
  [ 3, 4, 0, 0, 1, 0, 0 ].
\end{gather*}
\end{example}

\begin{example}\label{ex:Ex3}
%\subsection*{Example 4}
Most explicit examples of the resolution of Thue--Mahler equations
\eqref{eqn:TM}
found in the literature involve a relatively small
set of primes $\{p_1,\dotsc,p_v\}$. The following
example, aside from being an interesting Diophantine
application in its own right, is intended to
illustrate that our algorithm can cope with a relatively
large set of primes.

For a non-zero integer $m$, let $P(m)$
denote the maximum prime divisor of $m$ (where
we take $P(1)=P(-1)=0$).
In this example, we solve the inequality
\[
	P(X^4-2Y^4) \le 100, \qquad \gcd(X,Y)=1.
\]
Let $F(X,Y)=X^4-2Y^4$.
We therefore would like to solve \eqref{eqn:TM} with $a= \pm 1$
and with $p_1,\dotsc,p_v$
being the set of primes $\le 100$ (of which there are $25$).
However, it is clear that if $2$ is not a fourth power
modulo $p$, then $p \nmid (X^4-2Y^4)$. Thus we reduce
to the much smaller set of primes $p \le 100$
for which $2$ is a fourth power. This is the set
\[
\{2, 7, 23, 31, 47, 71, 73, 79, 89\}.
\]
Therefore the Thue--Mahler equation we shall consider is
\[
X^4-2Y^4 \; = \; \pm 2^{z_1} \cdot 7^{z_2} \cdot 23^{z_3}
\cdot 31^{z_4} \cdot 47^{z_5} \cdot 71^{z_6} \cdot 73^{z_7} \cdot
79^{z_8} \cdot 89^{z_9}, \qquad \gcd(X,Y)=1.
\]
Our implementation took
roughly 3.5 days to solve this Thue--Mahler equation.
There are $49$ solutions (up to changing the signs of $X$, $Y$):
\begin{gather*}
 [ 0, 1, 1, 0, 0, 0, 0, 0, 0, 0, 0 ],\quad
    [ 1, 0, 0, 0, 0, 0, 0, 0, 0, 0, 0 ],\displaybreak[0] \\
    [ 1, 1, 0, 0, 0, 0, 0, 0, 0, 0, 0 ],\quad
    [ 1, 2, 0, 0, 0, 1, 0, 0, 0, 0, 0 ],\displaybreak[0] \\
    [ 1, 3, 0, 1, 1, 0, 0, 0, 0, 0, 0 ],\quad
    [ 1, 4, 0, 1, 0, 0, 0, 0, 1, 0, 0 ],\displaybreak[0] \\
    [ 1, 11, 0, 1, 0, 0, 1, 0, 0, 0, 1 ],\quad
    [ 2, 1, 1, 1, 0, 0, 0, 0, 0, 0, 0 ],\displaybreak[0] \\
    [ 2, 3, 1, 0, 0, 0, 0, 0, 1, 0, 0 ],\quad
    [ 2, 27, 1, 1, 0, 2, 0, 0, 0, 1, 0 ],\displaybreak[0] \\
    [ 3, 1, 0, 0, 0, 0, 0, 0, 0, 1, 0 ],\quad
    [ 3, 2, 0, 2, 0, 0, 0, 0, 0, 0, 0 ],\displaybreak[0] \\
    [ 3, 14, 0, 0, 1, 0, 1, 1, 0, 0, 0 ],\quad
    [ 4, 3, 1, 0, 0, 0, 1, 0, 0, 0, 0 ],\displaybreak[0] \\
    [ 4, 5, 1, 1, 0, 0, 0, 1, 0, 0, 0 ],\quad
    [ 5, 1, 0, 1, 0, 0, 0, 0, 0, 0, 1 ],\displaybreak[0] \\
    [ 5, 8, 0, 1, 1, 0, 1, 0, 0, 0, 0 ],\quad
    [ 6, 5, 1, 0, 1, 0, 0, 0, 0, 0, 0 ],\displaybreak[0] \\
    [ 6, 19, 1, 0, 0, 1, 1, 0, 0, 0, 1 ],\quad
    [ 8, 1, 1, 0, 1, 0, 0, 0, 0, 0, 1 ],\displaybreak[0] \\
    [ 10, 23, 1, 2, 0, 0, 0, 1, 0, 1, 0 ],\quad
    [ 11, 9, 0, 2, 0, 1, 0, 0, 0, 0, 0 ],\displaybreak[0] \\
    [ 11, 20, 0, 0, 0, 0, 1, 0, 1, 0, 1 ],\quad
    [ 15, 1, 0, 0, 1, 1, 0, 1, 0, 0, 0 ],\displaybreak[0] \\
    [ 15, 13, 0, 0, 0, 0, 0, 0, 1, 0, 1 ],\quad
    [ 16, 21, 1, 0, 1, 0, 0, 0, 0, 1, 1 ],\displaybreak[0] \\
    [ 17, 5, 0, 2, 1, 0, 0, 0, 1, 0, 0 ],\quad
    [ 19, 20, 0, 4, 0, 0, 0, 0, 0, 1, 0 ],\displaybreak[0] \\
    [ 21, 11, 0, 0, 0, 1, 0, 0, 2, 0, 0 ],\quad
    [ 22, 49, 1, 0, 1, 1, 0, 0, 0, 0, 2 ],\displaybreak[0] \\
    [ 33, 13, 0, 1, 0, 0, 2, 0, 1, 0, 0 ],\quad
    [ 37, 19, 0, 0, 1, 2, 0, 0, 1, 0, 0 ],\displaybreak[0] \\
    [ 40, 13, 1, 1, 0, 1, 0, 0, 1, 1, 0 ],\quad
    [ 52, 51, 1, 2, 1, 1, 0, 0, 0, 0, 1 ],\displaybreak[0] \\
    [ 53, 44, 0, 1, 1, 1, 0, 0, 0, 1, 0 ],\quad
    [ 59, 56, 0, 0, 0, 1, 1, 1, 1, 0, 0 ],\displaybreak[0] \\
    [ 61, 48, 0, 1, 0, 0, 0, 1, 1, 0, 1 ],\quad
    [ 66, 101, 1, 0, 2, 1, 0, 0, 1, 1, 0 ],\displaybreak[0] \\
    [ 68, 43, 1, 1, 1, 2, 1, 0, 0, 0, 0 ],\quad
    [ 95, 58, 0, 1, 0, 1, 1, 0, 1, 1, 0 ],\displaybreak[0] \\
    [ 118, 101, 1, 2, 1, 0, 0, 1, 0, 0, 1 ],\quad
    [ 142, 57, 1, 1, 3, 1, 0, 0, 1, 0, 0 ],\displaybreak[0] \\
    [ 162, 137, 1, 2, 0, 0, 2, 0, 1, 0, 0 ],\quad
    [ 181, 124, 0, 0, 1, 0, 1, 0, 0, 2, 1 ],\displaybreak[0] \\
    [ 221, 295, 0, 0, 2, 0, 1, 0, 1, 1, 1 ],\quad
    [ 281, 199, 0, 1, 1, 0, 1, 1, 1, 1, 0 ],\displaybreak[0] \\
    [ 286, 283, 1, 3, 1, 0, 0, 0, 3, 0, 0 ],\quad
    [ 389, 96, 0, 4, 0, 1, 1, 0, 1, 0, 1 ],\displaybreak[0] \\
    [ 420, 437, 1, 0, 0, 1, 3, 0, 1, 0, 1 ].
\end{gather*}
\end{example}
%--------------------------------------------------------------------------------------------%

%\subsection*{Example 5}
\begin{example}\label{ex:Ex4}
All examples found in the literature are of Thue--Mahler
equations where the form $F$ has the property that
the field $K^\prime=\Q(\theta_1,\theta_2,\theta_3)$
(defined above) has small degree.
As indicated above, a distinguishing feature of our algorithm is that
the computations are carried out in the much
smaller extension $K=\Q(\theta)$ (also defined above).
Our last example is intended to illustrate this difference.
Consider the Thue--Mahler equation,
\begin{multline*}
5 X^{11} + X^{10} Y + 4 X^9 Y^2 + X^8 Y^3 + 6 X^7 Y^4 + X^6 Y^5 + 6 X^5 Y^6 +
    6 X^3 Y^8 + 4 X Y^{10} - 2 Y^{11}\\
=
2^{z_1} \cdot 3^{z_2} \cdot 5^{z_3} \cdot 7^{z_4} \cdot 11^{z_5} \, .
\end{multline*}
As usual, we let $F$ denote the form on the left-hand side.
The Galois group of $F$ is $S_{11}$, therefore the field
$K$ has degree $11$, whereas the field $K^\prime$ has degree
$11 \times 10 \times 9=990$.
Our program solved this Thue--Mahler equation in around $6.8$ hours. However this
time was almost entirely taken up with the computation of the class group and
the units of $K$. Once the class group and unit computations were complete, it took
only $3.6$ minutes to provably determine the solutions.
These are
\begin{gather*}
    [ 0, -1, 1, 0, 0, 0, 0 ], \qquad
    [ 1, -1, 1, 1, 1, 0, 0 ],\\
    [ 1, 1, 5, 0, 0, 0, 0 ],\qquad
    [ 1, 2, 0, 3, 0, 1, 1 ]\, .
\end{gather*}
\end{example}
%--------------------------------------------------------------------------------------------%

\subsection{Notation and Organization of the paper}
As before $F \in \Z[X,Y]$ will be a
binary form of degree $d \ge 3$, irreducible in $\Q[X,Y]$,
and with coefficients $a_0,\dotsc,a_d$ as in  \eqref{eqn:F}.
Let $a$ be a non-zero integer and $p_1,\dotsc,p_v$ be distinct primes
satisfying $p_i \nmid a$.
Let
\[
f(x)=a_0^{d-1}\cdot F(x/a_0,1)=x^d+a_{1} x^{d-1}+a_0 a_{2} x^{d-2}+\cdots
+a_0^{d-1} a_d.
\]
This is an irreducible monic polynomial with coefficients in $\Z$.
Let $\theta$ be a root
of $f$ and let $K=\Q(\theta)$. Note that $K$ is a number field
of degree $d$.
Write $\OO_K$ for the ring of integers of $K$.
We can rewrite our Thue--Mahler equation
\eqref{eqn:TM} as
\begin{equation}\label{eqn:TM2}
\Norm(a_0 X- \theta Y)= a_0^{d-1} \cdot a \cdot p_1^{z_1} \cdots p_v^{z_v}.
\end{equation}
Note that we do not assume that $(a_0,p_i) = 1$.

\bigskip

The paper is organized as follows.
\begin{enumerate}[(a)]
\item In Section~\ref{sec:ppart} we
consider the decomposition of the ideal $(a_0X - \theta Y)\OO_K$
as a product of prime ideals. In particular, we introduce an
algorithm to compare and restrict the possible valuations of all prime ideals
above each of $p_1, \dots, p_v$.
\item We summarize the results of applying this
algorithm in Section~\ref{sec:eq-in-ideals}, wherein we reduce solving
\eqref{eqn:TM2} to solving a family of ideal equations of the form
\eqref{eqn:ideal}.
\item In Section~\ref{sec:making-ideals-princ}, we show
that such ideal equations are either impossible due to a class group
obstruction, or reduce to a single
equation of the form
\eqref{eqn:unitequation}.
The remainder of the paper is devoted to solving equations of the form
\eqref{eqn:unitequation} where the unknowns are coprime integers
$X$, $Y$ and non-negative integers $b_1,\dotsc,b_r$.
\item In Section~\ref{sec:lower-bounds}, we recall key
theorems from the theory of lower bounds for linear forms in complex and $p$-adic logarithms
due to Matveev and to Yu.
\item In Section~\ref{sec:sunit}, with the help of these theorems,
we obtain a very large upper bound on
the exponents $b_1,\dotsc,b_r$ in \eqref{eqn:unitequation}.
\item \label{fp} In Section~\ref{sec:controlling-vals} we show how an application
of close vector algorithms allows us to obtain
a substantially improved bound on the $\fp$-adic valuation of $a_0X-\theta Y$
for any prime $\fp$. This step avoids the $\fp$-adic logarithms of earlier
approaches.
\item \label{rfrl} Section~\ref{sec:LFRL}
uses the real and complex embeddings of $K$ applied to \eqref{eqn:unitequation}
to obtain $d-2$ approximate relations involving the exponents
$b_i$. In Section~\ref{sec:red}, we set up an
\lq approximation lattice\rq\ using these $d-2$
approximate relations. We explain how
close vector algorithms can be used
to substantially reduce our bound
for the exponents $b_1,\dotsc,b_r$ in \eqref{eqn:unitequation}.
Earlier approaches used just one
of the $d-2$ relations to construct the approximation
lattice, but we explain why using just one
approximate relation can fail in certain situations.
\item Steps (\ref{fp}) and (\ref{rfrl}) are applied
repeatedly until no further
improvements in the bounds are possible.
In Section~\ref{sec:sieve}
we introduce an analogue
of the Mordell--Weil sieve,
which we call the \lq\lq Dirichlet sieve\rq\rq\,
which is capable of efficiently sieving
for the solutions up to the remaining bounds,
thereby finally resolving the Thue-Mahler equation \eqref{eqn:TM}.
\end{enumerate}

\subsection*{Acknowledgements}
The authors are grateful to Mike Bennett,
Rafael von K\"{a}nel and Benjamin Matschke
for stimulating discussions. The authors are indebted to the
referee for many pertinent corrections and improvements.

%--------------------------------------------------------------------------------------------%

\section{The $p$-part of $(a_0 X- \theta Y) \OO_K$}\label{sec:ppart}

If $\fc$ is a fractional ideal of $\OO_K$, and $p$ is
a rational prime, we define the \textbf{$p$-part of $\fc$} to be the fractional ideal
\[
\prod_{\fp \mid p} \fp^{\ord_\fp(\fc)}.
\]
%Let $p_i$ be as in \eqref{eqn:TM}, \eqref{eqn:TM2}.
%We want to study, for each $p=p_i$,
For each rational prime $p\in \{p_1, \dots, p_v\}$ of \eqref{eqn:TM2}, we want to study the $p$-part of ${(a_0 X- \theta Y) \OO_K}$
coming from the prime ideals above $p$.
The so-called Prime Ideal Removal Lemma in Tzanakis and de Weger
compares the possible valuations of ${(a_0 X-\theta Y)\OO_K}$ at two
prime ideals $\fp_1$, $\fp_2 \mid p$ to help cut down the possibilities
for the the $p$-part of $(a_0 X- \theta Y) \OO_K$.
However if $\fp_1 \mid (a_0 X-\theta Y)\OO_K$
then this restricts the values of $X$ and $Y$ modulo $p$.
Indeed, any choice of $X$ and $Y$ modulo $p$ affects the valuations
of $(a_0 X- \theta Y)\OO_K$ at all primes $\fp \mid p$. So we study all
valuations at the same time, not just two of them. This enables us to give a much smaller list of possibilities for the $p$-part of $(a_0 X- \theta Y) \OO_K$ than in Tzanakis and de Weger, as we will see in Section~\ref{sec:making-ideals-princ}.

\begin{defn}\label{defn:sat}
Let $p$ be a rational prime. Let $L_p$ be a subset of the ideals
$\fb$ supported at the prime ideals
above $p$. Let $M_p$ be a subset of the set of pairs $(\fb,\fp)$ where $\fb$ is supported at the prime
ideals above $p$, and $\fp \mid p$ is a prime ideal
satisfying $e(\fp|p)=f(\fp|p)=1$, where $e(\fp|p)$ and $f(\fp|p)$ are, respectively, the ramification index and inertial degree of $\fp$ over $p$. We call the pair $L_p$, $M_p$ \textbf{satisfactory}
if for every solution $(X,Y)$
 to \eqref{eqn:TM},
\begin{enumerate}
\item[(i)] either the $p$-part of $(a_0 X-\theta Y)\OO_K$ is in $L_p$,
%\begin{equation}\label{eqn:case1}
%\fb \mid (a_0 X- \theta Y) \OO_K, \qquad \text{$(a_0 X-\theta Y)\OO_K/\fb$ is
%coprime to $p \OO_K$};
%\end{equation}
\item[(ii)] or there is a pair $(\fb,\fp) \in M_p$
and a non-negative integer $l$ such that the $p$-part of
$(a_0 X-\theta Y)\OO_K$ is equal to $\fb \fp^{l}$.
%\begin{equation}\label{eqn:case2}
%(\fb \cdot \fp^v) \mid (a_0 X- \theta Y) \OO_K,
%\qquad \text{$(a_0 X-\theta Y)\OO_K/(\fb \cdot \fp^v)$ is
%coprime to $p \OO_K$}.
%\end{equation}
\end{enumerate}
\end{defn}
At this point the definition
is perhaps mysterious. Lemma~\ref{lem:dimyst}
and the following remark give an explanation for
the definition and for the existence of finite
satisfactory sets $L_p$, $M_p$.
We will give an algorithm to produce
(hopefully small) satisfactory sets $L_p$ and $M_p$.
Before that we embark on a simplification.
The expression $a_0 X-\theta Y$ is a linear form
in two variables $X$, $Y$. It is easier to scale
so that we are dealing with a linear expression
in just one variable.

For a rational prime $p$, let
\[
\Z_{(p)}=\{U \in \Q \; : \; \ord_p(U) \ge 0\}.
\]
\begin{defn}\label{defn:adeq}
Let $p$ be a rational prime.
Let $\alpha \in K$ and $\beta \in K^{\times}$.
Let $L$ be a subset of the ideals $\fb$ supported on
the prime ideals of $\OO_K$ above $p$. Let $M$ be a subset of the set of pairs
$(\fb,\fp)$ where $\fb$ is supported on the prime ideals
above $p$, and where $\fp$ is a prime ideal above $p$ satisfying
$e(\fp|p)=f(\fp|p)=1$. We call $L$, $M$ \textbf{adequate
for $(\alpha,\beta)$} if for every $U \in \Z_{(p)}$,
\begin{enumerate}
\item[(i)] either the $p$-part of $\beta \cdot (U+\alpha) \OO_K$ is in $L$,
\item[(ii)] or there is a pair $(\fb,\fp) \in M$
and a non-negative integer $l$ such that
the $p$-part  of  $\beta \cdot (U+\alpha) \OO_K$
equals $\fb \fp^l$.
\end{enumerate}
\end{defn}

\begin{lem}\label{lem:adequate}
Let $L$, $M$ be adequate for $(-\theta/a_0,a_0)$ and let $L_p=L \cup \{1\cdot\mathcal{O}_K\}$ and $M_p=M$. Then the pair $L_p$, $M_p$ is satisfactory.
\end{lem}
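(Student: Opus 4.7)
The plan is to unfold both definitions and route every solution $(X,Y)$ of \eqref{eqn:TM} through one of the two clauses of Definition~\ref{defn:sat}, reducing the two-variable linear form $a_0X-\theta Y$ to the one-variable form $a_0(U+\alpha)$ of Definition~\ref{defn:adeq} (with $\alpha=-\theta/a_0$ and $\beta=a_0$). Since this substitution only makes sense when $Y$ is a $p$-adic unit, I would split on whether $p\mid Y$.

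In the generic case $p\nmid Y$, I would set $U=X/Y\in\Z_{(p)}$ and write
\[
a_0X-\theta Y \;=\; Y\cdot a_0\bigl(U-\theta/a_0\bigr).
\]
Because $Y$ is a rational integer coprime to $p$, the ideal $Y\OO_K$ has trivial $p$-part, and so the $p$-part of $(a_0X-\theta Y)\OO_K$ coincides with that of $a_0(U+\alpha)\OO_K$. Adequacy of $(L,M)$ for $(-\theta/a_0,a_0)$ then forces this $p$-part either to lie in $L\subseteq L_p$, or to have the form $\fb\fp^{l}$ with $(\fb,\fp)\in M=M_p$, which matches clauses (i) and (ii) of Definition~\ref{defn:sat}.

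The remaining case $p\mid Y$ is what the adjoined element $1\cdot\OO_K$ in $L_p$ is designed to handle. Here $\gcd(X,Y)=\gcd(a_0,Y)=1$ forces $p\nmid a_0X$, so $\ord_{\fp}(a_0X)=0$ for every prime $\fp\mid p$ of $\OO_K$. On the other hand $\theta\in\OO_K$ and $p\mid Y$ give $\ord_{\fp}(\theta Y)\geq e(\fp\mid p)\geq 1$, so the two summands have distinct $\fp$-adic valuations and $\ord_{\fp}(a_0X-\theta Y)=0$. Hence the $p$-part of $(a_0X-\theta Y)\OO_K$ is the unit ideal $1\cdot\OO_K$, which indeed lies in $L_p$.

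I do not anticipate a serious obstacle: the argument is essentially book-keeping between the two definitions. The only subtle point worth flagging in the write-up is that the two hypotheses $\gcd(X,Y)=1$ and $\gcd(a_0,Y)=1$ built into \eqref{eqn:TM} are exactly what is needed in the $p\mid Y$ case to conclude $p\nmid a_0X$, and hence to justify the explicit enlargement $L_p=L\cup\{1\cdot\OO_K\}$.
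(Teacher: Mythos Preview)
Your proposal is correct and follows essentially the same approach as the paper: split on whether $p\mid Y$, reduce the $p\nmid Y$ case to adequacy via $U=X/Y$, and in the $p\mid Y$ case use $\gcd(X,Y)=\gcd(a_0,Y)=1$ together with $\theta\in\OO_K$ to conclude that the $p$-part is trivial. If anything, you supply a bit more detail in the $p\mid Y$ case (explicitly invoking the non-archimedean inequality) than the paper does.
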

\begin{proof}
  Recall that $\gcd(X,Y)=\gcd(a_0,Y)=1$.

  If $p \mid Y$ then $\ord_{\fp}(Y) > 0$ for any $\fp$ above $p$, and thus
\[
	\ord_{\fp}(a_0X-\theta Y) =0.
	%= \min\left(\ord_{\fp}(a_0X), \ord_{\fp}(\theta Y)\right) = 0.
\]

  If $p \nmid Y$, we write
    \[
      U=\frac{X}{Y}, \qquad \alpha=\frac{-\theta}{a_0}, \qquad
      \beta=a_0.
    \]
    Then $U \in \Z_{(p)}$ and $\ord_{\fp}(a_0 X - \theta Y) = \ord_\fp(\beta \cdot (U+\alpha))$ for all prime ideals $\fp$ above $p$. Thus the $p$-part of $\beta \cdot (U+\alpha)$ is equal to the $p$-part of $\ord_{\fp}(a_0 X - \theta Y)$.

  The lemma follows.
\end{proof}

We now demystify Definitions~\ref{defn:sat}
and~\ref{defn:adeq}.
\begin{lem}\label{lem:dimyst}
Let $p$ be a rational prime and $\gamma$ a generator of $K$. Then there is a bound $B$ depending only on $p$ and $\gamma$ such that
the following hold:
\begin{enumerate}
\item[(a)] For any $U \in \Z_{(p)}$ and any pair
of distinct prime ideals $\fp_1$, $\fp_2$ lying over $p$,
\[
\ord_{\fp_1}(U+\gamma) \le B, \qquad \text{or} \qquad
\ord_{\fp_2}(U+\gamma) \le B.
\]
\item[(b)] For any $U \in \Z_{(p)}$ and any prime ideal $\fp$ over $p$ with $e(\fp|p) \ne 1$ or ${f(\fp|p) \ne 1}$,
\[
\ord_\fp(U+\gamma) \le B.
\]
\end{enumerate}
\end{lem}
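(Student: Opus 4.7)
The plan is to transfer the statement to a single algebraic closure of $\Q_p$ where the strong triangle inequality and the closedness of $\Q_p$ do the work. Fix an algebraic closure $\overline{\Q_p}$ equipped with the unique extension $v$ of $\ord_p$ normalized so that $v(p)=1$. For each prime $\fp$ of $\OO_K$ above $p$ choose an embedding $\sigma_\fp : K\hookrightarrow\overline{\Q_p}$ inducing $\fp$, and set $\gamma_\fp := \sigma_\fp(\gamma)$. Since $\ord_\fp$ restricted to $\Q_p\subseteq K_\fp$ equals $e(\fp\mid p)\cdot\ord_p$, the key translation is
\[
\ord_\fp(U+\gamma) \;=\; e(\fp\mid p)\cdot v(U+\gamma_\fp),\qquad U\in\Z_{(p)}.
\]

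For part (a), if $\ord_{\fp_i}(U+\gamma) > N$ for $i=1,2$, then using $e(\fp_i\mid p)\le d$ the identity above gives $v(U+\gamma_{\fp_i}) > N/d$, and the strong triangle inequality yields $v(\gamma_{\fp_1}-\gamma_{\fp_2}) > N/d$. Distinct primes above $p$ correspond to distinct $\Gal(\overline{\Q_p}/\Q_p)$-orbits of embeddings of $K$, and any embedding of $K=\Q(\gamma)$ is determined by its value at $\gamma$, so $\gamma_{\fp_1}\ne\gamma_{\fp_2}$. Hence $v(\gamma_{\fp_1}-\gamma_{\fp_2})$ is a fixed finite quantity depending only on $p$ and $\gamma$, and $N$ is bounded by $d$ times the maximum such value over the finitely many pairs of primes above $p$.

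For part (b), the hypothesis $e(\fp\mid p)\ne 1$ or $f(\fp\mid p)\ne 1$ is just the statement that $K_\fp\ne\Q_p$. I first observe that $\gamma_\fp\notin\Q_p$: because $\gamma$ generates $K$ over $\Q$ and $K$ is dense in $K_\fp$, the subfield $\Q_p(\gamma_\fp)$ is dense in $K_\fp$, and being finite-dimensional over $\Q_p$ it is closed, hence equal to $K_\fp$; if $\gamma_\fp$ lay in $\Q_p$ this would force $K_\fp=\Q_p$, contradicting the hypothesis. Since $\Q_p$ is complete it is closed in $K_\fp$, so the distance from $\gamma_\fp$ to $\Q_p$ is strictly positive, equivalently $C_\fp := \sup_{U\in\Q_p} v(U+\gamma_\fp) < \infty$. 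Via the translation identity this yields $\ord_\fp(U+\gamma) \le e(\fp\mid p)\cdot C_\fp$ for every $U\in\Z_{(p)}\subseteq\Q_p$. Taking $B$ to be the maximum of the bounds from parts (a) and (b) over the finitely many primes above $p$ then proves the lemma.

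The substantive content is the pair of finiteness statements $v(\gamma_{\fp_1}-\gamma_{\fp_2}) < \infty$ and $C_\fp < \infty$; the first is immediate once one checks $\gamma_{\fp_1}\ne\gamma_{\fp_2}$, and the second is the step I expect a reader to want to pause on, since it uses the small detour that $\gamma_\fp$ generates $K_\fp$ over $\Q_p$ and hence cannot itself lie in $\Q_p$ unless $K_\fp=\Q_p$.
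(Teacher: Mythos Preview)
Your proof is correct and rests on the same two observations as the paper's: that an embedding of $K=\Q(\gamma)$ into $\overline{\Q_p}$ is determined by the image of $\gamma$, and that $\gamma_\fp$ generates $K_\fp$ over $\Q_p$. The paper packages these via compactness of $\Z_p$ and a convergent-subsequence contradiction, whereas you give the equivalent direct argument using the strong triangle inequality and closedness of $\Q_p$; your route has the minor bonus of producing an explicit bound $B = d\cdot\max\{v(\gamma_{\fp_1}-\gamma_{\fp_2}),\,C_\fp\}$ rather than a purely existential one.
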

\begin{proof}
Let $\fp$ be a prime ideal above $p$, and suppose that $\ord_\fp(U+\gamma)$
is unbounded for $U \in \Z_{(p)}$. Thus
there is an infinite sequence
$\{U_i\} \subset \Z_{(p)}$
such that
\[
\lim_{i \rightarrow \infty} \ord_{\fp}(U_i+\gamma)
%\; = \;
%\lim_{i \rightarrow \infty} \ord_{\fp_2}(U_i+\alpha)
\; = \; \infty.
\]
However, $\Z_{(p)} \subset \Z_p$, where the latter is
compact. Thus $\{U_i\}$ contains
an infinite subsequence
$\{U_{n_i}\}$ converging to, say, $U \in \Z_p$.
Write $\phi_{\fp} \; : \; K \hookrightarrow \C_p$ for the embedding
of $K$ corresponding to the prime ideal $\fp$.
It follows that ${\phi_{\fp}(\gamma)=-U \in \Z_p}$. Recall the assumption that
$K=\Q(\gamma)$. Thus $K_\fp$, the topological closure
of $\phi_\fp(K)$ in $\C_p$, is in fact $\Q_p$. Thus $e(\fp|p)=f(\fp|p)=1$.
This proves (b).

For (a), suppose that there is a pair of distinct primes
$\fp_1$, $\fp_2$ above $p$ and an infinite sequence
$\{U_i\} \subset \Z_{(p)}$
such that
\begin{equation}\label{eqn:limit}
\lim_{i \rightarrow \infty} \ord_{\fp_1}(U_i+\gamma)
\; = \;
\lim_{i \rightarrow \infty} \ord_{\fp_2}(U_i+\gamma)
\; = \; \infty.
\end{equation}
Again, let
$\{U_{n_i}\}$ be an infinite subsequence of $\{U_i\}$
converging to, say, $U \in \Z_p$. Then
$\phi_{\fp_1}(\gamma)=-U=\phi_{\fp_2}(\gamma)$. As $K=\Q(\gamma)$,
the embeddings $\phi_{\fp_1}$, $\phi_{\fp_2}$ are equal,
contradicting
$\fp_1 \ne \fp_2$.
\end{proof}

\noindent \textbf{Remark.} We apply Lemma~\ref{lem:dimyst} with $\gamma = \alpha$ as in Lemma~\ref{lem:adequate} in order to explain Definitions~\ref{defn:sat} and~\ref{defn:adeq}. The valuation of $\beta \cdot (U+\alpha)$ can be arbitrarily large only
for those $\fp$ above $p$ that satisfy $e(\fp|p)=f(\fp|p)=1$, and if it is sufficiently large for one such $\fp$ then it is bounded for all others. Thus there must exist finite adequate sets $L$, $M$. We now turn to the task of giving an algorithm to determine such adequate sets $L$, $M$.

% \bigskip

\begin{lem}\label{lem:strucstab}
Let $\alpha \in K$ and let $p$ be a rational prime. Let $\fp$ be a prime ideal of $\OO_K$ above $p$.
Suppose $U \in \Z_{(p)}$ and
\begin{equation}\label{eqn:valhu}
\ord_\fp(U+\alpha) > \min\{0,\ord_\fp(\alpha)\}.
\end{equation}
Then the following hold:
\begin{enumerate}
\item[(i)] $\ord_{\fp}(\alpha) \ge 0$.
\item[(ii)] The image of $\alpha$ in $\F_{\fp}:=\OO_K/\fp$
belongs to the prime subfield $\F_p$. In particular, there is a unique $u \in \{0, \dots, p-1\}$ such that $u \equiv -\alpha \pmod{\fp}$.
\item[(iii)] With $u$ as in (ii), $U=p U^\prime+u$
where $U^\prime \in \Z_{(p)}$.
\end{enumerate}
\end{lem}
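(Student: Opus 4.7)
The plan is to work directly with the ultrametric inequality on $K_\fp$, noting first that since $\beta \in K^\times$, the hypothesis
\[
\ord_\fp(\beta(U+\alpha)) > \ord_\fp(\beta) + \min\{0, \ord_\fp(\alpha)\}
\]
simplifies immediately to $\ord_\fp(U+\alpha) > \min\{0,\ord_\fp(\alpha)\}$ after subtracting $\ord_\fp(\beta)$ from both sides. I would then deduce all three conclusions as essentially routine ultrametric manipulations, with the only mild subtlety being the interplay between $\ord_\fp$ and $\ord_p$ via the ramification index $e(\fp|p)$.

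For part (i), I would argue by contradiction. If $\ord_\fp(\alpha) < 0$, then since $U \in \Z_{(p)} \subset \Z_p$ we have $\ord_\fp(U) = e(\fp|p)\ord_p(U) \ge 0$, so $\ord_\fp(U) > \ord_\fp(\alpha)$, and the ultrametric inequality gives the \emph{equality} $\ord_\fp(U+\alpha) = \ord_\fp(\alpha) = \min\{0,\ord_\fp(\alpha)\}$, contradicting the hypothesis. For part (ii), given (i) the hypothesis becomes $\ord_\fp(U+\alpha) > 0$, i.e.\ $\alpha \equiv -U \pmod{\fp}$. Since $U \in \Z_{(p)}$ reduces to some element of $\F_p$ (taking a representative $u \in \{0,\dots,p-1\}$ with $U \equiv u \pmod p$), the image of $\alpha$ in $\F_\fp$ lies in the prime subfield $\F_p \subset \F_\fp$, and uniqueness of $u$ follows from the fact that $\fp \cap \Z = p\Z$.

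For part (iii), I would subtract the two congruences $\alpha \equiv -u \pmod{\fp}$ (from (ii)) and $U + \alpha \equiv 0 \pmod{\fp}$ (from the hypothesis) to obtain $U - u \equiv 0 \pmod{\fp}$. Since $U - u \in \Q$, the identity $\ord_\fp(U-u) = e(\fp|p)\cdot \ord_p(U-u)$ combined with $\ord_\fp(U-u) > 0$ forces $\ord_p(U-u) \ge 1$ (integer-valued on $\Q^\times$), so $U' := (U-u)/p \in \Z_{(p)}$ as required, with the case $U = u$ handled by $U' = 0$.

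The hardest part is really just bookkeeping — making sure to pass cleanly between the $\fp$-adic valuation on $K$ and the $p$-adic valuation on $\Q$ when applying the hypothesis to a rational element $U$, and recognizing at the right moment that a rational integer $U$'s reduction must live in $\F_p$ rather than all of $\F_\fp$. No external input beyond the ultrametric inequality and the standard facts $e(\fp|p)\ord_p = \ord_\fp\big|_{\Q^\times}$ and $\fp\cap\Z = p\Z$ is needed.
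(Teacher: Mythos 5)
Your proof is correct and follows essentially the same route as the paper: you establish (i) via the ultrametric equality forced by $\ord_\fp(\alpha)<0$, (ii) by reducing modulo $\fp$ and observing that $\overline{U}\in\F_p$ forces $\overline{\alpha}=-\overline{U}\in\F_p$, and (iii) by passing from $\ord_\fp(U-u)>0$ to $\ord_p(U-u)\ge 1$. The only cosmetic difference is that you argue (ii) directly after invoking (i), whereas the paper phrases it as a contradiction; the underlying content is the same.
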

\begin{proof}
  Since $U \in \Z_{(p)}$, we have $\ord_\fp(U) \ge 0$. If $\ord_{\fp}(\alpha)<0$, it follows that ${\ord_\fp(U+\alpha)=\ord_\fp(\alpha)}$, contradicting \eqref{eqn:valhu}. Thus $\ord_\fp(\alpha) \ge 0$, proving (i).

Write $\overline{\alpha}$ for the image of $\alpha$ in $\F_{\fp}$,
and suppose this does not belong to the prime subfield $\F_p$.
In particular $\ord_\fp(\alpha)=0$. However, the image $\overline{U}$
of $U$ in $\F_{\fp}$ does belong to $\F_p$. Thus $U \not \equiv -\alpha
\pmod{\fp}$, or equivalently $\ord_\fp(U+\alpha)=0$, contradicting \eqref{eqn:valhu}. We deduce that $\overline{\alpha} \in \F_p$,
and thus (ii) holds.

Now, let $u$ be as in (ii). By \eqref{eqn:valhu}, we have $\ord_\fp(U+\alpha)>0$, and thus $\overline{U}=-\overline{\alpha}=\overline{u}$.
But $\overline{U}$, $\overline{u} \in \F_p$. Therefore, $\ord_p(U-u)>0$,
and so $U=pU^\prime+u$ for some $U^\prime \in \Z_{(p)}$.
\end{proof}

\begin{alg}\label{alg1}
Given $p$ a rational prime, $\alpha \in K$ satisfying
$K=\Q(\alpha)$, and $\beta \in K^{\times}$,
to compute $L$, $M$ adequate for $(\alpha,\beta)$:
\begin{enumerate}
\item[Step (a)] Let
%\[
%\cG=\{ \fp \mid p \; : \;
%\ord_\fp(\alpha)<0 \quad \text{or} \quad
%\text{the image of $\alpha$ in $\F_\fp$ does not belong to $\F_p$}\},
%\]
\[
\cB=\{\fp \mid p \; : \;
\ord_\fp(\alpha) \ge 0 \text{ and }
\text{the image of $\alpha$ in $\F_\fp$ belongs to $\F_p$}\},
\]
and
\[
\fb=\prod_{\fp \mid p} \fp^{\ord_\fp(\beta)+\min\{0,\ord_\fp(\alpha)\}}.
\]
\item[Step (b)]
If $\cB=\emptyset$ then return $L=\{\fb\}$, $M=\emptyset$ and terminate
the algorithm.
\item[Step (c)]
If $\cB$ consists of a single prime ideal $\fp^\prime$ satisfying
$e(\fp^\prime|p)=f(\fp^\prime|p)=1$ then return
$L=\emptyset$, $M=\{(\fb,\fp^\prime)\}$ and terminate the algorithm.
\item[Step (d)]
Let
\[
 \cU=\{ 0 \le u \le p-1 \; : \; \text{there is some $\fp \in \cB$
 such that $\alpha \equiv -u \pmod{\fp}$}\}.
\]
Loop through the elements $u \in \cU$. For each $u$,
use Algorithm~\ref{alg1} to compute adequate $L_u$, $M_u$
for the pair $((u+\alpha)/p,p\beta)$.
\begin{enumerate}
\item[Step (d1)] If $\cU=\{0,1,2,\dotsc,p-1\}$ then return
\begin{equation}\label{eqn:return}
L=\bigcup_{u \in \cU} L_u, \qquad M=\bigcup_{u \in \cU} M_u,
\end{equation}
and terminate the algorithm.
\item[Step (d2)] Else, return
\begin{equation}\label{eqn:returnII}
L=\{\fb\} \cup \bigcup_{u \in \cU} L_u, \qquad M=\bigcup_{u \in \cU} M_u,
\end{equation}
and terminate the algorithm.
\end{enumerate}
\end{enumerate}
\end{alg}

\noindent \textbf{Remarks.}
\begin{itemize}
\item Algorithm~\ref{alg1} is recursive.
If the hypotheses of (b) and (c) fail
then the algorithm replaces the linear form $\beta \cdot (U+\alpha)$
with a number of  linear forms
\[
p \beta \cdot (U^\prime+ (u+\alpha)/p)=\beta \cdot (pU^\prime+u+\alpha).
\]
In essence we are replacing $\Z_{(p)}$ with a number of the
cosets of $p \Z_{(p)}$. The algorithm is then applied to
each of these  linear forms individually.
\item Note that the number of prime ideals $\fp$ above $p$ is bounded by the degree $[K:\Q]$. In particular, $\# \cU \le [K:\Q]$. Therefore the number of branches
at each iteration of the algorithm is bounded independently
of $p$.
\end{itemize}
\begin{prop}
Suppose $\beta \in K^{\times}$, $\alpha \in K$ and moreover, $K=\Q(\alpha)$.
Then Algorithm~\ref{alg1} terminates in finite time
and produces adequate $L$, $M$ for $(\alpha,\beta)$.
\end{prop}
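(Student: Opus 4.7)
The plan is to establish the two claims of the proposition separately; correctness of the output will follow by induction on the depth of the recursion tree (well-founded once termination is established), and termination is the main obstacle. For correctness, fix an arbitrary $U\in\Z_{(p)}$. The key routine observation --- essentially the contrapositive of Lemma~\ref{lem:strucstab} --- is that for any $\fp\mid p$ with $\fp\notin\cB$, the valuation $\ord_\fp(\beta(U+\alpha))$ is pinned to the baseline value $\ord_\fp(\beta)+\min\{0,\ord_\fp(\alpha)\}$ regardless of $U$; indeed either $\ord_\fp(\alpha)<0$, or $\ord_\fp(\alpha)\ge 0$ with $\alpha\bmod\fp\notin\F_p$, and both cases give the identity by direct computation. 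With this in hand each terminating branch is easy: step~(b) gives $p$-part equal to $\fb$; step~(c) gives $p$-part $\fb\cdot(\fp')^\ell$ for some $\ell\ge 0$, yielding the witness $(\fb,\fp')\in M$ for condition~(ii) of Definition~\ref{defn:adeq}. In step~(d), either $U\bmod p = u\in\cU$, in which case the rewrite $\beta(U+\alpha)=p\beta\cdot(U'+(u+\alpha)/p)$ with $U=pU'+u$ lets us invoke the inductive adequacy of $(L_u,M_u)$ for $((u+\alpha)/p,\,p\beta)$; or $U\bmod p\notin\cU$ (possible only in (d2)), in which case $U\not\equiv -\alpha\pmod\fp$ for every $\fp\in\cB$ again pins the $p$-part to $\fb\in L$.

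Termination is where the real work is, and I would prove it by contradiction via K\"{o}nig's lemma. Each recursive call branches into at most $|\cU|\le [K:\Q]$ subcalls, so if termination fails then the recursion tree has an infinite path, specified by a sequence of residues $u_0,u_1,\ldots\,$. At depth $n$ the algorithm operates on the pair $(\alpha_n,\beta_n)=((U_n+\alpha)/p^n,\,p^n\beta)$ with $U_n:=\sum_{i=0}^{n-1}u_ip^i\in\Z$, and one checks $\Q(\alpha_n)=K$ at every level. Non-termination forces $\cB_n\ne\emptyset$ at every depth, so pigeonhole on the finite set of primes above $p$ supplies a fixed $\fp^*\mid p$ lying in $\cB_n$ for infinitely many $n$. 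For each such $n$, $\fp^*\in\cB_n$ gives $\ord_{\fp^*}(U_n+\alpha)\ge n\cdot e(\fp^*|p)$, whence $U_n\to -\phi_{\fp^*}(\alpha)$ in $K_{\fp^*}$. But the same rational integers converge $p$-adically to $U_\infty:=\sum_{i\ge 0}u_ip^i\in\Z_p\subseteq K_{\fp^*}$, so $\phi_{\fp^*}(\alpha)=-U_\infty\in\Q_p$. Since $K=\Q(\alpha)$, this forces $K_{\fp^*}=\Q_p$, i.e.\ $e(\fp^*|p)=f(\fp^*|p)=1$.

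To close, observe that if $\cB_n=\{\fp^*\}$ at any of these depths the algorithm would have exited via step~(c); hence $|\cB_n|\ge 2$ at the infinitely many depths where $\fp^*\in\cB_n$. A second pigeonhole then produces a distinct prime $\fp^{**}\ne\fp^*$ lying in $\cB_n$ for infinitely many such $n$, and the same reasoning delivers $e(\fp^{**}|p)=f(\fp^{**}|p)=1$ together with $\phi_{\fp^{**}}(\alpha)=-U_\infty$. The embeddings $\phi_{\fp^*}$ and $\phi_{\fp^{**}}$ thus agree on the generator $\alpha$ of $K$, hence coincide on all of $K$, which contradicts $\fp^*\ne\fp^{**}$ and completes the proof.
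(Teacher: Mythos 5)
Your proof is correct and follows essentially the same route as the paper's. The correctness half is the same: use Lemma~\ref{lem:strucstab} (contrapositively) to pin the valuation at every $\fp \notin \cB$ to the baseline $\ord_\fp(\beta)+\min\{0,\ord_\fp(\alpha)\}$, and then treat each terminating branch of the algorithm and the recursive rewrite $\beta(U+\alpha)=p\beta(U'+(u+\alpha)/p)$. The termination half also uses the same two ideas the paper does: (1) an infinite path forces, by pigeonhole on the finitely many primes above $p$, a persistent $\fp^*$ with $\phi_{\fp^*}(\alpha)$ pinned to $-U_\infty\in\Z_p$, hence $e(\fp^*|p)=f(\fp^*|p)=1$; and (2) distinctness of embeddings attached to distinct primes above $p$ combined with $K=\Q(\alpha)$. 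The only cosmetic difference is in how the contradiction is closed: the paper observes the nested inclusion $\cB_{i+1}\subseteq\cB_i$ so that eventually $\cB_i$ is a singleton (or empty) and step~(b)/(c) fires, whereas you bypass that inclusion and instead invoke step~(c) to force $\lvert\cB_n\rvert\ge 2$ at the persistent depths and then pigeonhole a second time to get $\fp^{**}\neq\fp^*$ with the same embedding. Both variants are sound; yours avoids needing the monotonicity of $\cB_n$, which is a small simplification, but the underlying mechanism is identical.
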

\begin{proof}
Let $\cB$ and $\fb$ be as in Step (a).
Observe that, for any $\fp$ above $p$,
\[
\ord_\fp(\beta \cdot (U+\alpha)) \ge
\ord_\fp(\beta)+\min\{0,\ord_\fp(\alpha)\}=\ord_{\fp}(\fb).
\]
It follows that $\fb$ divides the $p$-part of
$\beta \cdot (U+\alpha)$.
Lemma~\ref{lem:strucstab} tells us that
\begin{equation}\label{eqn:equality}
\ord_\fp(\beta \cdot (U+\alpha))=
\ord_\fp(\beta)+\min\{0,\ord_\fp(\alpha)\}=\ord_{\fp}(\fb)
\end{equation}
for all prime ideals $\fp$ lying above $p$, except possibly for $\fp \in \cB$.
If $\cB=\emptyset$ (i.e.\ the hypothesis of (b)
is satisfied), then the $p$-part
of $\beta \cdot (U+\alpha)$ is $\fb$, and hence the pair
$L=\{\fb\}$, $M=\emptyset$ is
adequate for $(\alpha,\beta)$.
If $\cB=\{\fp^\prime\}$ where $e(\fp^\prime|p)=f(\fp^\prime|p)=1$
(i.e.\ the hypothesis of step (c) is satisfied) then
the $p$-part of $\beta \cdot (U+\alpha)$ has the form
$\fb \cdot {\fp^\prime}^{l}$ for some $l \ge 0$. Hence
$L=\emptyset$, $M=\{(\fb,\fp^\prime)\}$ are adequate for
$(\alpha,\beta)$.

Suppose the hypotheses of Step (b) and Step (c) fail.
Let $\cU$ be as in Step (d).
If $U \equiv u \pmod{p}$ for some $u \in \cU$,
then $U=p U^\prime+u$ for some $U^\prime \in \Z_{(p)}$.
Thus $\beta \cdot (U+\alpha)=p \beta \cdot (U^\prime+(u+\alpha)/p)$,
and so naturally the $p$-parts of $\beta \cdot (U+\alpha)$
and $p \beta \cdot (U^\prime+(u+\alpha)/p)$ agree.
In (d1), $\cU$ represents all of the congruence classes
modulo $p$,
and this justifies \eqref{eqn:return}.
In (d2), $\cU$ represents some of the congruence
classes. If $u \notin \cU$,
then by Lemma~\ref{lem:strucstab},
the equality \eqref{eqn:equality} holds for all prime ideals $\fp$ above $p$,
and hence $\fb$ is the $p$-part of $\beta \cdot (U+\alpha)$.
This justifies \eqref{eqn:returnII}.

Next we show that the algorithm terminates
in finitely many steps. Suppose otherwise.
Then there will be an infinite sequence of
$u_i \in \{0,1,\dotsc,p-1\}$ and pairs
$(\alpha_i,\beta_i)$ with
\[
\alpha_0=\alpha, \qquad \beta_0=\beta, \qquad
\alpha_{i+1}=\frac{u_i+\alpha_i}{p}, \qquad \beta_{i+1}=p \beta_i.
\]
Let us denote by $\cB_i$ the set $\cB$ for the pair $(\alpha_i,\beta_i)$.
It is easy to see from the definition that $\cB_{i+1} \subseteq \cB_i$.
Suppose $\fp$ belongs to infinitely many of the $\cB_i$. Then,
infinitely often, $\ord_\fp(\alpha_i) \ge 0$. However,
\[
\alpha=\alpha_0=-u_0-u_1 p -u_2 p^2 - \cdots - u_{i-1} p^{i-1}+p^i \alpha_i.
\]
Let $\mu=-u_0-u_1 p -\cdots  \in \Z_p$. Let $\phi_\fp : K \hookrightarrow \C_p$
be the embedding corresponding to $\fp$. Then $\phi_\fp(\alpha)=\mu$.
Since $K=\Q(\alpha)$, the embedding $\phi_\fp$ is determined
by the image of $\alpha$. Since $\phi_\fp \ne \phi_{\fp^\prime}$
whenever $\fp \ne \fp^\prime$, we see that $\cB_i$
consists of at most one prime for $i$ sufficiently large. For such a prime, we must have $\phi_\fp(K)=\Q_p$ so that $e(\fp|p)=f(\fp|p)=1$. Thus for sufficiently large $i$
the algorithm must terminate at Step (b) or Step (c),
giving a contradiction.
\end{proof}

Following Lemma~\ref{lem:adequate}, we thus let $L_p =L \cup \{1\cdot\mathcal{O}_K\}$ and $M_p=M$, where we compute $L$ and $M$ using Algorithm~\ref{alg1} with $\alpha = -\theta/a_0$ and $\beta = a_0$.

\medskip

\noindent \textbf{Refinements.}
Let $L_p$, $M_p$ be a satisfactory pair (for example,
produced by Algorithm~\ref{alg1}). We explain
here some obvious refinements that will reduce or simplify
these sets, whilst maintaining the satisfactory property.
\begin{itemize}
\item If some pair $(\fb,\fp)$ is in $M_p$ then
we may replace this with the pair $(\fb^\prime,\fp)$ where
\[
\fb^\prime=\frac{\fb}{\fp^{\ord_\fp(\fb)}} \, .
\]
\item If some $\fb$ is contained in $L_p$, and some $(\fb^\prime,\fp)$
is contained in $M_p$ with $\fb^\prime \mid \fb$ and $\fb/\fb^\prime=\fp^w$
for some $w \ge 0$, then we may delete $\fb$ from $L_p$.
\item Suppose $p \mid a_0$.
Observe that if
 $\fb \in L_p$ is the $p$-part of $(a_0 X- \theta Y) \OO_K$ then
 $\ord_p(\Norm(a_0 X- \theta Y))=\ord_p(\Norm(\fb))$.
However, it is clear that $\ord_p(\Norm(a_0 X-\theta Y)) \ge (d-1) \ord_p(a_0)$. Thus, we may delete $\fb$ from $L_p$ if $\ord_p(\Norm(\fb))< (d-1) \ord_p(a_0)$.
\end{itemize}

%-------------------------------------------------------------------------------------------%
%-------------------------------------------------------------------------------------------%

\section{An Equation in Ideals} \label{sec:eq-in-ideals}

Let $(X,Y)$ be a solution of \eqref{eqn:TM}. For every $p \in P:=\{p_1,\dotsc,p_v\}$ we let $L_p$, $M_p$ be a corresponding satisfactory pair. From Definition \ref{defn:sat}, we see that there is some partition  $P=P_1 \cup P_2$ such that for every $p \in P_1$, the $p$-part of $(a_0 X- \theta Y) \OO_K$ equals $\fb \fp^{l}$ for some $(\fb,\fp) \in M_p$ and $l \ge 0$, and for every $p \in P_2$, it equals some $\mathfrak{b} \in L_p$. Let $P=P_1 \cup P_2$ be a partition of $P$ and write
\[
P_1=\{q_1,\dotsc,q_{s}\}, \qquad P_2=\{q_{s+1},\dotsc,q_v\}.
\]
Let $\mathcal{Z}_{P_1,P_2}$ be the set of all pairs $(\fa,S)$
such that
there are $(\fb_i,\fp_i) \in M_{q_i}$ for $1 \le i \le s$,
and $\fb_j \in L_{q_j}$ for $s+1 \le j \le v$
satisfying
\[
\fa= \fb_0\cdot\fb_1 \fb_2 \cdots \fb_s \cdot \fb_{s+1} \cdots \fb_v, \qquad
S=\{\fp_1,\dotsc,\fp_s\},
\]
where $\fb_0$ denotes an ideal of $\OO_K$ of norm
  \[R = \bigg|\frac{a\cdot a_0^{d-1}}{\gcd(\Norm(\fb_1 \cdots\fb_v),a\cdot a_0^{d-1})}\bigg|.\]

Let
\[
\mathcal{Z} \; := \; \bigcup_{P_1 \subseteq P}
\mathcal{Z}_{P_1,P-P_1}.
\]
\begin{prop}\label{prop:Sprop}
Let $(X,Y)$ be a solution to
\eqref{eqn:TM}. Then there is some $(\fa,S) \in \mathcal{Z}$
such that
\begin{equation}\label{eqn:TMfactored}
(a_0 X- \theta Y)\OO_K=\fa \cdot \fp_1^{n_1}\cdots \fp_s^{n_s}
\end{equation}
where
$S=\{\fp_1,\dotsc,\fp_s\}$, and $n_1,\dotsc,n_s$
are non-negative integers. Moreover, the set $S$ has the following
properties:
\begin{enumerate}
\item[(a)] $e(\fp_i|q_i)=f(\fp_i|q_i)=1$ for $1 \le i \le s$.
\item[(b)] Let $1 \le i \le s$. Let $p$ be the unique rational
prime below $\fp_i$. Then $\fp_j \nmid p$ for all $1 \le j \le s$
with $j \ne i$.
\end{enumerate}
\end{prop}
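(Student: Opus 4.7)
The plan is to apply Definition~\ref{defn:sat} prime-by-prime and simply read off the required data. Given a solution $(X,Y)$ to \eqref{eqn:TM}, for each rational prime $p \in P$ the satisfactory pair $L_p$, $M_p$ offers two possibilities for the $p$-part of $(a_0X - \theta Y)\OO_K$: either it equals $\fb \cdot \fp^l$ for some $(\fb,\fp) \in M_p$ and integer $l \ge 0$, or it equals some $\fb \in L_p$. I would define $P_1 \subseteq P$ to be the set of primes where the first possibility holds and $P_2 = P \setminus P_1$; this yields the desired partition. Labelling $P_1 = \{q_1,\dotsc,q_s\}$ and $P_2 = \{q_{s+1},\dotsc,q_v\}$, I record for each $1 \le i \le s$ the datum $(\fb_i,\fp_i) \in M_{q_i}$ together with the non-negative integer $n_i$, and for each $s+1 \le j \le v$ the ideal $\fb_j \in L_{q_j}$.

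Next, I would let $\fb_0$ denote the part of $(a_0X - \theta Y)\OO_K$ supported at prime ideals lying above rational primes \emph{outside} $P$. Concatenating the $p$-parts for $p \in P$ with $\fb_0$ yields
\[
(a_0 X - \theta Y)\OO_K \;=\; \fb_0 \cdot \prod_{i=1}^{v} \fb_i \cdot \prod_{i=1}^{s} \fp_i^{n_i}.
\]
Setting $\fa = \fb_0 \cdot \fb_1 \cdots \fb_v$ and $S = \{\fp_1,\dotsc,\fp_s\}$ immediately gives the factorisation \eqref{eqn:TMfactored}.

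To conclude $(\fa,S) \in \mathcal{Z}_{P_1,P_2}$, I need to verify that $\Norm(\fb_0) = R$. Taking absolute norms on both sides and invoking \eqref{eqn:TM2} gives
\[
|a_0^{d-1} a| \cdot \prod_{i=1}^{v} p_i^{z_i} \;=\; \Norm(\fb_0) \cdot \Norm(\fb_1 \cdots \fb_v) \cdot \prod_{i=1}^{s} q_i^{n_i}.
\]
Since $\fb_0$ is supported only at primes above rationals outside $P$, its norm is coprime to $p_1 \cdots p_v$; hence $\Norm(\fb_0)$ is precisely the non-$P$-part of $|a_0^{d-1} a|$. Comparing this with $R = |a_0^{d-1} a|/\gcd(\Norm(\fb_1 \cdots \fb_v), a \cdot a_0^{d-1})$, it suffices to check that $\ord_{p_i}(\Norm(\fb_1 \cdots \fb_v)) \ge \ord_{p_i}(a_0^{d-1}a)$ for every $i$, so that the gcd coincides with the $P$-part of $|a_0^{d-1}a|$; for $i \in P_2$ this follows immediately from the identity $\ord_{p_i}(\Norm(\fb_i)) = \ord_{p_i}(a_0^{d-1}a) + z_i$, and for $i \in P_1$ it is guaranteed by the refinements imposed on $L_p$ and $M_p$ at the end of Section~\ref{sec:ppart}.

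Finally, properties (a) and (b) of $S$ are almost tautological. Property (a) holds because membership $(\fb_i,\fp_i) \in M_{q_i}$ forces $e(\fp_i | q_i) = f(\fp_i | q_i) = 1$ by the very definition of $M_p$. Property (b) holds because $\fp_j$ lies above the distinct rational prime $q_j$ for $j \ne i$, so $\fp_j$ cannot divide $q_i$. The only non-routine point in the argument is the bookkeeping identification of $R$ with $\Norm(\fb_0)$; the remainder is a direct translation of Definition~\ref{defn:sat}.
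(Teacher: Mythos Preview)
Your approach matches the paper's: the paper does not give a standalone proof of Proposition~\ref{prop:Sprop}, but simply states it after noting ``From Definition~\ref{defn:sat}, we see that there is some partition $P=P_1\cup P_2$\ldots'' and then defining $\mathcal{Z}$. Your argument is exactly the unpacking of that sentence, and parts (a) and (b) are indeed immediate from the definition of $M_p$ and the fact that the $q_i$ are distinct rational primes.

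One small caveat on the norm bookkeeping: your verification that $\Norm(\fb_0)=R$ requires $\ord_{q_i}(\Norm(\fb_i))\ge (d-1)\ord_{q_i}(a_0)$ for every $i$. For $q_i\in P_2$ your argument is clean. For $q_i\in P_1$ you appeal to ``the refinements imposed on $L_p$ and $M_p$ at the end of Section~\ref{sec:ppart}'', but the third refinement (the only one concerning the bound $\ord_p(\Norm(\fb))\ge(d-1)\ord_p(a_0)$) is stated there only for $L_p$, not for $M_p$. So that particular citation does not justify the inequality for pairs $(\fb_i,\fp_i)\in M_{q_i}$. This is a genuine but minor gap, and arguably reflects a slight informality in the paper's own definition of $R$ rather than a flaw in your strategy; the paper itself does not spell out this verification either.
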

\begin{proof}
The claims follow from the definitions of $\mathcal{Z}$ and $M_p$.
\end{proof}

To solve \eqref{eqn:TM}, we will solve \eqref{eqn:TMfactored}
for every possible choice of $(\fa,S) \in \mathcal{Z}$.

\medskip

\noindent\textbf{Remark.} Observe that for any $(\fa,S) \in \mathcal{Z}$, there may be several possibilities for $\fb_0$. Let $\mathcal{R}$ denote the set of all ideals $\fb_0$ having norm $R$ for some $(\fa,S) \in \mathcal{Z}$. Here, we provide a simple refinement to cut down the number of ideals in $\mathcal{R}$. In particular, we apply Algorithm~\ref{alg1} and Lemma~\ref{lem:adequate} to each rational prime $p$ dividing $R$, generating the corresponding sets $M_p$ and $L_p$. For each $\fb_0 \in \mathcal{R}$, if the $p$-part of $\fb_0$ cannot be made up of any of the elements of $M_p$ or $L_p$, we may remove $\fb_0$ from $\mathcal{R}$. Moreover, if this process yields $\mathcal{R} = \emptyset$, we may remove $(\fa,S)$ from $\mathcal{Z}$.

\section{Making the Ideals Principal}
\label{sec:making-ideals-princ}

From now on we fix $(\fa,S) \in \mathcal{Z}$ and
we focus on a solution of \eqref{eqn:TMfactored}.
The method of Tzanakis and de Weger \cite{TW} reduces
\eqref{eqn:TMfactored}
to at most $(m/2) \cdot h^s$ $S$-unit equations, where $m$
is the number of roots of unity and $h$ is the class number of $K$. Our method,
explained below, gives at most only $m/2$ $S$-unit equations.

Given an ideal $\fb$ of $\OO_K$, we denote its class in
the class group $\Cl(K)$ by $[\fb]$.
\begin{lem}
Let
\[
\phi : \Z^s \rightarrow \Cl(K), \qquad (m_1,\dotsc,m_s)
\mapsto [\fp_1]^{m_1}\cdots [\fp_s]^{m_s}.
\]
\begin{enumerate}
\item[(a)] If $[\fa]^{-1}$ is not in the image of $\phi$ then
\eqref{eqn:TMfactored} has no solutions.
\item[(b)] Suppose $[\fa]^{-1}=\phi(\rr)$,
where $\rr=(r_1,\dotsc,r_s)$. Let $\zeta$
be a generator of the roots of unity in $K$,
and suppose it has order $m$.
Let $\delta_1,\dotsc,\delta_r$ be a basis
for the group of $S$-units $\OO_S^\times$
modulo the torsion subgroup $\langle \zeta \rangle$.
Let $\alpha$ be a generator of the principal
ideal $\fa \cdot \fp_1^{r_1} \cdots \fp_s^{r_s}$.
Let $(X,Y)$ satisfy \eqref{eqn:TMfactored}.
Then, after possibly replacing $(X,Y)$ by $(-X,-Y)$,
we have
\begin{equation}\label{eqn:taudeltaorig}
a_0 X- \theta Y \; = \; \tau \cdot \delta_1^{b_1} \cdots \delta_r^{b_r}
\end{equation}
where $\tau=\zeta^{a} \cdot \alpha$ with $0 \le a \le \frac{m}{2}-1$,
and $b_1,\dotsc,b_r \in \Z$.
\end{enumerate}
\end{lem}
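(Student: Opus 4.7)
The plan is to pass from the ideal equation to a principal element equation via the $S$-unit group, and then use the roots of unity to reduce the coset range.

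For part (a), I would simply take ideal classes on both sides of \eqref{eqn:TMfactored}. Since $(a_0 X - \theta Y)\OO_K$ is principal, its class is trivial, giving
\[
[\fa]^{-1} = [\fp_1]^{n_1}\cdots [\fp_s]^{n_s} = \phi(n_1,\dotsc,n_s).
\]
Hence any solution forces $[\fa]^{-1}$ to lie in the image of $\phi$, proving (a) by contrapositive.

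For part (b), given $\rr=(r_1,\dotsc,r_s)$ with $[\fa]^{-1}=\phi(\rr)$, the ideal $\fa \cdot \fp_1^{r_1}\cdots \fp_s^{r_s}$ is principal, so a generator $\alpha$ exists. Setting $\beta := (a_0X-\theta Y)/\alpha$ and using \eqref{eqn:TMfactored}, I would compute
\[
\beta\cdot\OO_K \; = \; \fp_1^{n_1-r_1}\cdots \fp_s^{n_s-r_s}.
\]
Since this fractional ideal is supported on $S = \{\fp_1,\dotsc,\fp_s\}$, we have $\beta \in \OO_S^\times$. Using the decomposition $\OO_S^\times = \langle\zeta\rangle \times \langle \delta_1,\dotsc,\delta_r\rangle$, write
\[
\beta \; = \; \zeta^{a^\prime} \cdot \delta_1^{b_1}\cdots \delta_r^{b_r}, \qquad 0 \le a^\prime \le m-1,
\]
and substitute back to get $a_0X-\theta Y = \zeta^{a^\prime}\alpha \cdot \delta_1^{b_1}\cdots \delta_r^{b_r}$.

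The only remaining point is shrinking the range of $a^\prime$ from $\{0,\dotsc,m-1\}$ to $\{0,\dotsc,m/2-1\}$. Since $-1\in K$ is always a root of unity and $\zeta$ generates the full group of roots of unity, $m$ is even and $\zeta^{m/2}=-1$. If $a^\prime \ge m/2$, I would write $a^\prime = m/2 + a$ with $0 \le a \le m/2-1$, so that $\zeta^{a^\prime} = -\zeta^{a}$; the overall sign can then be absorbed by replacing $(X,Y)$ with $(-X,-Y)$, which preserves all hypotheses on the solution and the ideal equation. This yields the stated form with $\tau = \zeta^{a}\alpha$. I do not anticipate a serious obstacle here — the main care needed is simply to verify that $\beta$ really is an $S$-unit (i.e.\ that its ideal factorization has support in $S$), which follows directly from cancelling the factorisations of $(a_0X-\theta Y)\OO_K$ and $\alpha\OO_K$.
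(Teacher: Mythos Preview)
Your proposal is correct and follows essentially the same approach as the paper's proof: taking ideal classes in \eqref{eqn:TMfactored} for (a), then for (b) observing that $(a_0X-\theta Y)/\alpha$ is an $S$-unit and using $\zeta^{m/2}=-1$ to halve the range of the root-of-unity exponent via the sign change $(X,Y)\mapsto(-X,-Y)$.
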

\begin{proof}
Note that if \eqref{eqn:TMfactored} has a solution
$\nn=(n_1,\dotsc,n_s)$ then
\[
\phi(\nn)=[\fa]^{-1}.
\]
This proves $(a)$.
For $(b)$, suppose $[\fa]^{-1}=\phi(r_1,\dotsc,r_s)$. Thus $\fa \cdot \fp_1^{r_1} \cdots \fp_s^{r_s}$ is principal
and we let $\alpha$ be a generator. Then the fractional ideal
$((a_0X-\theta Y)/\alpha) \OO_K$ is supported on $S=\{\fp_1,\dotsc,\fp_s\}$.
Hence $(a_0X-\theta Y)/\alpha \in \OO_S^\times$.
Now $\zeta,\delta_1,\dotsc,\delta_r$ is a set of generators for the $S$-unit
group, where $\delta_1,\dotsc,\delta_r$ is in fact a basis
for $\OO_S^\times/\langle \zeta \rangle$. Thus \eqref{eqn:taudeltaorig} holds
for some $0\le a \le m-1$, and $b_1,\dotsc,b_r \in \Z$.
However $\zeta^{m/2}=-1$. Thus we can suppose
$0 \le a \le m/2-1$ by replacing $(X,Y)$ by $(-X,-Y)$
if necessary.
\end{proof}

\begin{lem}\label{lem:tauden}
The denominator of the fractional ideal $\tau \OO_K$
is supported on the set of prime ideals ${S=\{\fp_1,\dotsc,\fp_s\}}$.
\end{lem}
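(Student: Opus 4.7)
The plan is to unpack the definition of $\tau$ and read off the denominator directly from the factorization of the principal ideal it generates. Recall from the preceding lemma that $\tau = \zeta^a \cdot \alpha$, where $\zeta$ is a root of unity (hence a unit of $\OO_K$) and $\alpha$ is a generator of the principal fractional ideal
\[
\fa \cdot \fp_1^{r_1} \cdots \fp_s^{r_s}.
\]

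First I would observe that multiplication by the unit $\zeta^a$ does not affect the fractional ideal generated, so
\[
\tau \OO_K \; = \; \alpha \OO_K \; = \; \fa \cdot \fp_1^{r_1} \cdots \fp_s^{r_s}.
\]
Next, I would recall from Section \ref{sec:eq-in-ideals} that $\fa = \fb_0 \cdot \fb_1 \cdots \fb_v$ is an integral ideal (each $\fb_i$ being an integral ideal, with $\fb_0$ of norm $R$, and the $\fb_i$ for $i \ge 1$ coming from the satisfactory sets $L_{q_i}$ or $M_{q_i}$, which consist of integral ideals). In particular, $\ord_\fq(\fa) \ge 0$ for every prime ideal $\fq$ of $\OO_K$.

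Finally, for any prime ideal $\fq \notin S$ we have $\ord_\fq(\fp_i^{r_i}) = 0$ for all $i = 1,\dotsc,s$, and therefore
\[
\ord_\fq(\tau \OO_K) \; = \; \ord_\fq(\fa) \; \ge \; 0.
\]
Thus the only prime ideals $\fq$ at which $\ord_\fq(\tau \OO_K)$ can be negative are those in $S$, which is exactly the statement that the denominator of $\tau \OO_K$ is supported on $S$. There is no real obstacle here; the lemma is essentially a bookkeeping observation about which primes were allowed negative exponents in the construction of $\alpha$.
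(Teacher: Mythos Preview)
Your proof is correct, but it takes a different route from the paper's. The paper's one-line argument appeals to equation~\eqref{eqn:taudeltaorig}, namely $a_0X-\theta Y=\tau\cdot\delta_1^{b_1}\cdots\delta_r^{b_r}$: since the left-hand side lies in $\OO_K$ and the $\delta_i$ are $S$-units, any prime $\fq\notin S$ has $\ord_\fq(\tau)=\ord_\fq(a_0X-\theta Y)\ge 0$. You instead work directly from the definition $\tau=\zeta^a\alpha$ with $\alpha\OO_K=\fa\cdot\fp_1^{r_1}\cdots\fp_s^{r_s}$, and read off the denominator from the integrality of $\fa$.

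Your approach has the advantage of being unconditional: the paper's argument tacitly presupposes that a solution $(X,Y,b_1,\dotsc,b_r)$ exists, whereas your computation establishes the claim purely from how $\tau$ was constructed, independently of whether~\eqref{eqn:TMdelta} has any solutions. Since the lemma is only ever invoked downstream while analysing a putative solution, this distinction is harmless in context, but your version is the logically cleaner statement. The one point you assert without detailed justification---that the ideals $\fb_i$ appearing in the satisfactory sets $L_p$, $M_p$ are integral---is indeed true: tracing through Algorithm~\ref{alg1} (and the subsequent refinements) one checks that every $\fb$ produced has all non-negative valuations, ultimately because each such $\fb$ divides the $p$-part of the algebraic integer $a_0X-\theta Y$.
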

\begin{proof}
This follows immediately from \eqref{eqn:taudeltaorig}
since $\delta_1,\dotsc,\delta_r$ are $S$-units.
\end{proof}

We have reduced the task of solving our original
Thue--Mahler equation \eqref{eqn:TM} to solving
equations of the form
\begin{equation}\label{eqn:TMdelta}
a_0 X- \theta Y=\tau \cdot \delta_1^{b_1} \cdots \delta_r^{b_r},
\end{equation}
subject to the conditions
\begin{equation}\label{eqn:restrictions}
X,~Y \in \Z, \quad \gcd(X,Y)=1, \quad \gcd(a_0,Y)=1, \quad
b_i \in \Z.
\end{equation}
For technical reasons we would like to exclude
the case $b_1=b_2=\cdots=b_r=0$;
of course we can trivially test if this case leads to
a solution. Hence we shall henceforth suppose in addition to \eqref{eqn:restrictions} that
\begin{equation}\label{eqn:restrictions2}
	\max\{\lvert b_1\rvert,\dotsc,\lvert b_r\rvert\} \; \ge \; 1.
\end{equation}
We will tackle each of these equations
\eqref{eqn:TMdelta} separately.
In \cite{TW}, the authors work in the field
generated by three conjugates of $\theta$
and its completions. This is fine theoretically
but difficult computationally. We will work
with that extension theoretically simply to obtain
a bound for
\begin{equation}\label{eqn:B}
	B:=\max\{\lvert b_1\rvert,\dotsc,\lvert b_r\rvert\}.
\end{equation}
(The reason for restriction \eqref{eqn:restrictions2}
is that in Section~\ref{sec:sunit} we work with $\log{B}$).
To reduce the bound, we will need to carry out
certain computations;
these will take place only in $K$, $\R$, $\C$,
but not in extensions of $K$, and certainly not in
extensions of $\Q_p$.

To obtain our initial bound for $B$ we shall mostly
follow ideas found in \cite{BG}, \cite{IntegralHyp},
\cite{Homero}. However, we have a key advantage
that will allow us to obtain sharper bounds:
namely we assume knowledge of the $S$-unit basis
$\delta_1,\dotsc,\delta_r$ rather than working
with estimates for the size of a basis.

\bigskip

\subsection{Convention on the choice of $S$-unit basis}
\label{page:convention}
As before let $\zeta$ be
a generator for the roots of unity in $\OO_K^\times$.
We note that the unit group $\OO_K^\times$
is a subgroup of the $S$-unit group $\OO_S^\times$.
Moreover, it is saturated in the sense that
the quotient $\OO_S^\times/\OO_K^\times$ is
torsion-free. Hence there is a basis
$\delta_1,\dotsc,\delta_r$ for $\OO_S^\times/\langle \zeta \rangle$
such that $\delta_1,\dotsc,\delta_{u+v-1}$
is a basis for $\OO_K^\times/\langle \zeta \rangle$, where $(u,v)$
is the signature of $K$. We shall in fact
work with such a basis.

\subsection{Examples continued}
Table~\ref{table1} gives some data for Examples \ref{ex:Ex1}--\ref{ex:Ex4}.
At this stage of the algorithm, we would like to stress the main difference between our approach and that of Tzanakis and de Weger \cite{TW}. When dealing with
\[(a_0 X- \theta Y)\OO_K=\fa \cdot \fp_1^{n_1}\cdots \fp_s^{n_s},\]
they write each exponent $n_i$ as $n_i=k_i h_i + m_i$, where $h_i$ is the order of $\fp_i$ in the class group of $\OO_K$ and $0 \le m_i \le h_i-1$. Now $\fp_i^{h_i}$ is principal and we may write it as
$(\beta_i) \OO_K$. It follows from \eqref{eqn:TMfactored}
that $\fa \cdot \fp_1^{m_1} \cdots \cdot \fp_s^{m_s}$ is principal. Clearly $h_i|h$ and there are at most $h^s$ possibilities for this ideal. In the worst case, we expect around
$h^{s-1}$ ideals to be principal, and so of the form $(\tau) \OO_K$.
%Generally, Tzanakis and de Weger obtain around $h^{s-1}$ equations of the form
%\[
%(a_0 X-\theta Y)\OO_K \; = \; (\tau \cdot \beta_1^{k_1} \cdots \cdot \beta_s^{k_s}) \OO_K,
%\]
%although they test $h^s$ ideals to see if they are principal.
This results in a huge explosion of cases when $h$ is non-trivial, as it is often the case that $h_i = h$.
For instance, in our Example~\ref{ex:Ex2}, the class number is $33$.
There are $32$ possibilities for $(\fa,S)$ in $\mathcal{Z}$.
The $32$
possible values for $s=\#S$ are
\begin{gather*}
  0,~ 0,~ 1,~ 1,~ 1,~ 1,~ 1,~ 1,~ 1,~ 1,~ 2,~ 2,~ 2,~ 2,~ 2,~ 2,\\
  2,~ 2,~ 2,~ 2,~ 2,~ 2,~ 3,~ 3,~ 3,~ 3,~ 3,~ 3,~ 3,~ 3,~ 4,~ 4,
\end{gather*}
where all possible ideals in $S$ have order $33$ in $\OO_K$. Following the method of Tzanakis and de Weger, we would need to
check $2672672$ ideals if they are principal, and this approach
leads to approximately $80990$ equations of the form~\eqref{eqn:TMdelta}.
With our approach, as we need only to deal with
$32$ ideal equations, we expect to deal with at most $32$ equations of the form \eqref{eqn:TMdelta}.
Indeed, in doing so, we obtain merely $30$
equations of the
form~\eqref{eqn:TMdelta}.

\begin{table}[h]
\begin{tabular}{|c|c|c|c|}
  \hline
  & $\#\mathcal{Z}$ & number of $(\tau,\delta_1,\dotsc,\delta_r)$ & rank frequencies\\
  \hline\hline
  Example~\ref{ex:Ex1} & $16$ & $16$ & $(1, 2)$, $(2, 6)$, $(3, 6)$, $(4, 2)$\\
  \hline
  Example~\ref{ex:Ex2} & $32$ & $30$ & $ (2, 8)$, $(3, 12)$, $(4, 8)$, $(5,2)$\\
  \hline
  Example~\ref{ex:Ex3} & $4096$ & $4096$ & $(10,4096)$\\
  \hline
  Example~\ref{ex:Ex4} & $2$ & $2$ & $(9,1)$, $(10,1)$\\
\hline\hline
\end{tabular}
\vspace{0.5em}
\caption{This table gives the sizes of the set
$\mathcal{Z}$ and the number of resulting $(\tau,\delta_1,\dotsc,\delta_r)$
for Examples \ref{ex:Ex1}--\ref{ex:Ex4}. The last column is a list of pairs
$(r,t)$ meaning there are $t$ cases where the $S$-unit
rank is $r$.}\label{table1}
\end{table}

%-------------------------------------------------------------------------------------------%
%-------------------------------------------------------------------------------------------%

\section{Lower Bounds for Linear Forms in Logarithms}\label{sec:lower-bounds}

In this section, we state theorems of Matveev \cite{Matveev} and of Yu \cite{Yu} for lower bounds for linear forms in complex and $p$-adic logarithms. In the next section, we will use these results to obtain bounds for $b_1, \dots, b_r$. We begin by establishing some notation, as well as some key results which we will need for the lower bound.

\begin{tabular}{ccl}
$L$ & \qquad & a number field.\\
$D$ & & the degree $[L:\Q]$.\\
$M_L$ & \qquad & the set of all places of $L$.\\
$M_L^\infty$ & & the subset of infinite places.\\
$M_L^{0}$ & & the subset of finite places.\\
$\nu$ & & a place of $L$.\\
$D_\nu$ & & the local degree $[L_\nu : \Q_\nu]$.\\
$\lvert \cdot \rvert_\nu$ & & the usual normalized absolute value
associated to $\nu$. \\
&& If $\nu$ is infinite and associated to
a real or complex embedding $\sigma$ of $L$, \\
&& then $\lvert \alpha \rvert_\nu=\lvert \sigma(\alpha) \rvert$.\\
&& If $\nu$ is finite and above the rational prime $p$,
then $\lvert p \rvert_\nu=p^{-1}$.\\
$\lVert \cdot \rVert_\nu$ && $=\lvert \cdot \rvert_\nu^{D_\nu}$.\\
$h(\cdot)$ && the absolute logarithmic height, defined in
\eqref{eqn:heightdef}.
\end{tabular}
In the above notation, the product formula may be stated as
\begin{equation}\label{eqn:productformula}
\prod_{\nu \in M_L} \lVert \alpha \rVert_\nu=1
\end{equation}
for all $\alpha \in L^\times$.
In particular, if $\nu$
is infinite, corresponding to a real or complex
embedding $\sigma$ of $L$, then
\begin{equation}\label{eqn:arch}
\lVert \alpha \rVert_\nu=
\begin{cases}
\lvert \sigma(\alpha) \rvert & \text{if $\sigma$ is real,}\\
\lvert \sigma(\alpha) \rvert^2 & \text{if $\sigma$ is complex.}
\end{cases}
\end{equation}
If $\nu$ is finite, and $\mP$ is the prime ideal corresponding to
$\nu$, then for $\alpha \in L^\times$ we have
\begin{equation}\label{eqn:normval}
\lVert \alpha \rVert_\nu=\Norm(\mP)^{-\ord_\mP(\alpha)};
\end{equation}
this easily follows from $D_\nu=e(\mP|p)f(\mP|p)$, where
$e(\mP|p)$ and $f(\mP|p)$ are respectively the
ramification index and the inertial degree of $\mP$.

For $\alpha \in L$, we define the absolute logarithmic height
$h(\alpha)$ by
\begin{equation}\label{eqn:heightdef}
h(\alpha)=
\frac{1}{[L:\Q]} \sum_{\nu \in M_L}
D_\nu \log \max\{1, \lvert \alpha \rvert_\nu\}
=
\frac{1}{[L:\Q]} \sum_{\nu \in M_L}
\log \max\{1, \lVert \alpha \rVert_\nu\}.
\end{equation}
\begin{lem}\label{lem:conjheight}
The absolute logarithmic height of an algebraic number $\alpha$ is independent
of the number field $L$ containing $\alpha$.
Moreover, if $\alpha$ and $\beta$ are Galois conjugates, then
$h(\alpha)=h(\beta)$.
\end{lem}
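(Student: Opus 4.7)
The plan is to handle the two assertions separately, reducing each to standard compatibility properties of absolute values under field extensions and Galois actions.

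For the first assertion, I would show that if $L \subseteq L'$ are both number fields containing $\alpha$, then the height computed using \eqref{eqn:heightdef} with $L$ agrees with the value computed with $L'$; the general case then follows by comparing any two fields to their compositum. The key ingredients are: (i) for $\alpha \in L$ and a place $w \in M_{L'}$ lying above $\nu \in M_L$, the compatibility $|\alpha|_w = |\alpha|_\nu$ (automatic for archimedean $\nu$ via embeddings, and a standard consequence of the normalization $|p|_\nu = p^{-1}$ for non-archimedean $\nu$, since one verifies via \eqref{eqn:normval} that the extension $L'_w/L_\nu$ is compatible with the chosen absolute values); and (ii) the local degree identity $\sum_{w \mid \nu} [L'_w : L_\nu] = [L':L]$, which gives $\sum_{w \mid \nu} D_w = [L':L]\,D_\nu$. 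Grouping the sum over $M_{L'}$ by the underlying place of $L$ then yields
\[
\sum_{w \in M_{L'}} D_w \log\max\{1,|\alpha|_w\} = [L':L] \sum_{\nu \in M_L} D_\nu \log\max\{1,|\alpha|_\nu\},
\]
and dividing by $[L':\Q] = [L':L]\,[L:\Q]$ cancels the factor $[L':L]$ and gives the equality of the two heights.

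For the second assertion, I would fix a number field $L$ that is Galois over $\Q$ and contains both $\alpha$ and $\beta$; by the first part we are free to compute $h$ in $L$. Pick $\sigma \in \Gal(L/\Q)$ with $\sigma(\alpha) = \beta$. Then $\sigma$ induces a permutation of $M_L$ via $\nu \mapsto \nu \circ \sigma^{-1}$, and this permutation preserves local degrees: it sends archimedean places to archimedean places of the same type, and sends a prime $\mP$ above the rational prime $p$ to $\sigma(\mP)$, which has the same ramification index and residue degree over $p$. Hence
\[
|\beta|_{\sigma(\nu)} = |\alpha|_\nu \qquad \text{and} \qquad D_{\sigma(\nu)} = D_\nu,
\]
and reindexing the sum defining $h(\beta)$ by $\nu' = \sigma(\nu)$ produces the sum defining $h(\alpha)$ term by term.

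The hard part is really just bookkeeping with normalizations — matching the conventions in \eqref{eqn:heightdef} and \eqref{eqn:normdef} to the classical statements of these extension and Galois-equivariance facts — so no genuine creativity is required; both statements are textbook consequences of the general theory of places of number fields.
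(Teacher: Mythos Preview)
Your argument is correct and entirely standard. The paper itself does not supply a proof of this lemma: it is stated as a well-known fact, with the sentence ``For proofs of the following two lemmata, see \cite[Lemma 4.1]{IntegralHyp} and \cite[Lemma 3.2]{Homero}'' referring to the \emph{subsequent} Lemmas~\ref{lem:heightsum} and~\ref{lem:minimal}, not to this one. So there is nothing to compare against; your sketch is exactly the textbook verification one would expect.
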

For proofs of the following two lemmata,
see \cite[Lemma 4.1]{IntegralHyp}
and \cite[Lemma 3.2]{Homero}.
\begin{lem}\label{lem:heightsum}
For $\alpha_1,\dotsc,\alpha_n \in L$ we have
\[
h(\alpha_1\cdots \alpha_n) \le h(\alpha_1)+\cdots+h(\alpha_n),
\quad
h(\alpha_1+\cdots+\alpha_n) \le \log{n}+h(\alpha_1)+\cdots+h(\alpha_n).
\]
For any $\alpha \in L^\times$, we have $h(\alpha)=h(\alpha^{-1})$.
Moreover, for any place $\nu \in M_L$,
\begin{equation}\label{eqn:logheightbd}
\log{\lVert \alpha \rVert_\nu} \le [L:\Q] \cdot h(\alpha).
\end{equation}
\end{lem}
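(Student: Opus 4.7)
The plan is to verify all four assertions by a place-by-place analysis, exploiting the product formula where needed. Throughout, I use that $\lvert \cdot \rvert_\nu$ satisfies the usual triangle inequality when $\nu$ is archimedean and the stronger ultrametric inequality when $\nu$ is non-archimedean.

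First, for the product inequality, fix $\nu \in M_L$. Since $\lvert \alpha_1 \cdots \alpha_n \rvert_\nu = \prod_i \lvert \alpha_i \rvert_\nu$, we get
\[
\max\{1, \lvert \alpha_1\cdots \alpha_n\rvert_\nu\} \;\le\; \prod_{i=1}^n \max\{1, \lvert \alpha_i\rvert_\nu\}.
\]
Taking logarithms, multiplying by $D_\nu$, summing over $\nu$, and dividing by $D$ gives the first inequality. For the sum inequality, when $\nu$ is non-archimedean the ultrametric inequality yields $\max\{1, \lvert \sum \alpha_i\rvert_\nu\} \le \prod_i \max\{1, \lvert \alpha_i\rvert_\nu\}$, while when $\nu$ is archimedean we only get $\max\{1, \lvert \sum \alpha_i\rvert_\nu\} \le n \prod_i \max\{1, \lvert \alpha_i\rvert_\nu\}$. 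Taking $\log$, weighting by $D_\nu/D$, and summing contributes $\sum_{i} h(\alpha_i)$ plus an extra $\log n$ from the archimedean places; the latter is correctly weighted because $\sum_{\nu \in M_L^\infty} D_\nu/D = 1$.

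For $h(\alpha) = h(\alpha^{-1})$, the key identity is $\log \max\{1,t\} = \log t + \log \max\{1, t^{-1}\}$ for $t>0$. Applied with $t = \lvert \alpha \rvert_\nu$ (and noting $\lvert \alpha^{-1}\rvert_\nu = \lvert \alpha\rvert_\nu^{-1}$) and summed with weights $D_\nu$, this gives
\[
\sum_{\nu \in M_L} D_\nu \log \max\{1, \lvert \alpha \rvert_\nu\} \;=\; \sum_{\nu \in M_L} D_\nu \log \lvert \alpha \rvert_\nu \;+\; \sum_{\nu \in M_L} D_\nu \log \max\{1, \lvert \alpha^{-1}\rvert_\nu\}.
\]
The middle sum equals $\log \prod_\nu \lVert \alpha \rVert_\nu = 0$ by the product formula \eqref{eqn:productformula}, so dividing by $D$ produces $h(\alpha) = h(\alpha^{-1})$.

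Finally, for \eqref{eqn:logheightbd}: if $\lVert \alpha \rVert_\nu \le 1$ the left side is non-positive while the right side is non-negative (all summands in $h(\alpha)$ are $\ge 0$), so the inequality is trivial. If $\lVert \alpha \rVert_\nu > 1$, then $\log \lVert \alpha\rVert_\nu = \log \max\{1, \lVert \alpha\rVert_\nu\}$ is one of the non-negative summands in the expression $[L:\Q]\cdot h(\alpha) = \sum_{\mu \in M_L} \log \max\{1, \lVert \alpha \rVert_\mu\}$, so the inequality again holds. None of these steps presents a real obstacle: the only subtlety is bookkeeping the factor $\log n$ correctly by using that the normalized archimedean local degrees sum to $1$, and remembering that the product formula is the single nontrivial input driving the inversion identity.
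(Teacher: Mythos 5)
Your proof is correct. Note, though, that the paper does not supply its own proof of this lemma — it simply cites \cite[Lemma 4.1]{IntegralHyp} and \cite[Lemma 3.2]{Homero} for the first two inequalities (and treats the inversion identity and the local bound as standard). Your argument is exactly the textbook place-by-place one that those sources ultimately rely on: the key inputs are the elementary inequality $\max\{1, t_1\cdots t_n\}\le\prod_i\max\{1,t_i\}$, the triangle/ultrametric dichotomy, the normalization $\sum_{\nu\in M_L^\infty}D_\nu = [L:\Q]$ to account for the $\log n$ term, and the product formula for the inversion. All of these are handled correctly, including the small but necessary observation that the non-archimedean places contribute nothing to the $\log n$ correction.
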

%\begin{proof}
%For the first part of the lemma see \cite[Lemma 4.1]{IntegralHyp}.
%The second part follows immediately from the definition
%of height.
%\end{proof}

\begin{lem}\label{lem:minimal}
Let $L$ be a number field of degree $D$. Let $\sS$
be a finite set of finite places of $L$. Let $\varepsilon \in \OO_\sS^\times$.
Let $\eta \in M_L$ be a place of $L$ chosen so that
$\lVert \varepsilon \rVert_\eta$ is minimal. Then
$\lVert \varepsilon \rVert_\eta \le 1$ and
\[
h(\varepsilon) \le \frac{(\#M_L^\infty+\#\sS)}{D} \cdot \log(\lVert \varepsilon^{-1} \rVert_\eta).
\]
\end{lem}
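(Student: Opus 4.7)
The plan is to exploit the product formula together with the $\sS$-unit condition, which trivializes $\lVert\varepsilon\rVert_\nu$ outside of $M_L^\infty \cup \sS$. First I would observe that since $\varepsilon \in \OO_\sS^\times$, the only places where $\lVert\varepsilon\rVert_\nu \neq 1$ are finite places in $\sS$ or infinite places. So the product formula \eqref{eqn:productformula} collapses to a finite product
\[
\prod_{\nu \in M_L^\infty \cup \sS} \lVert\varepsilon\rVert_\nu \; = \; 1.
\]
From this, I would argue that $\lVert\varepsilon\rVert_\eta \le 1$: if it were the case that $\lVert\varepsilon\rVert_\nu > 1$ for every $\nu \in M_L^\infty \cup \sS$, the product above would exceed $1$, a contradiction. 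Hence the minimum of $\lVert\varepsilon\rVert_\nu$ over all places (which includes the places outside $M_L^\infty \cup \sS$ contributing $1$) is at most $1$.

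Next I would turn the product formula into an additive statement by taking logarithms and splitting $\log x = \log^+ x - \log^-(x)$, where $\log^+ x = \max\{0,\log x\}$ and $\log^- x = \max\{0,-\log x\}$. From $\sum_\nu \log\lVert\varepsilon\rVert_\nu = 0$ we obtain
\[
\sum_{\nu \in M_L} \log^+ \lVert\varepsilon\rVert_\nu \; = \; \sum_{\nu \in M_L} \log^- \lVert\varepsilon\rVert_\nu,
\]
and the left-hand side is exactly $D \cdot h(\varepsilon)$ by the definition \eqref{eqn:heightdef}. Since $\log^- \lVert\varepsilon\rVert_\nu = \log^+ \lVert\varepsilon^{-1}\rVert_\nu$, and this vanishes outside $M_L^\infty \cup \sS$ by the $\sS$-unit property, we get
\[
D \cdot h(\varepsilon) \; = \; \sum_{\nu \in M_L^\infty \cup \sS} \log^+ \lVert\varepsilon^{-1}\rVert_\nu.
\]

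Finally, I would use the defining property of $\eta$: minimising $\lVert\varepsilon\rVert_\nu$ is the same as maximising $\lVert\varepsilon^{-1}\rVert_\nu$, so $\log\lVert\varepsilon^{-1}\rVert_\eta \ge \log\lVert\varepsilon^{-1}\rVert_\nu$ for every $\nu$. Combined with $\lVert\varepsilon^{-1}\rVert_\eta \ge 1$ (which gives $\log^+ \lVert\varepsilon^{-1}\rVert_\eta = \log\lVert\varepsilon^{-1}\rVert_\eta$), each summand is bounded by $\log\lVert\varepsilon^{-1}\rVert_\eta$, yielding
\[
D \cdot h(\varepsilon) \; \le \; (\#M_L^\infty + \#\sS) \cdot \log\lVert\varepsilon^{-1}\rVert_\eta,
\]
which is the desired inequality after dividing by $D$. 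There is no real obstacle here; the only point requiring a little care is the bookkeeping with $\log^+$ and $\log^-$ together with the observation that places outside $M_L^\infty \cup \sS$ contribute trivially, which is what restricts the number of terms to exactly $\#M_L^\infty + \#\sS$.
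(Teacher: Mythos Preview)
Your argument is correct and complete. The paper itself does not supply a proof of this lemma; it simply cites \cite[Lemma 3.2]{Homero}, and your product-formula argument with the $\log^+/\log^-$ decomposition is exactly the standard proof one finds in those references.
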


%-------------------------------------------------------------------------------------------%

\subsection{Lower bounds for linear forms in $\mP$-adic logarithms}
Let $L$ be a number field of degree $D$. Let $\mP$ be a prime
ideal of $\OO_L$ and let $p$ be the rational prime below $\mP$.
Let $\nu \in M_L^0$ correspond to $\mP$.
Let
$\alpha_1, \dotsc,\alpha_n \in L^\times$. Write
$e=\exp(1)$.

\begin{comment} ORIGINAL VERSION
and
\[
C_1(D,n,\mP)=(16 eD)^{2n+2} \cdot n^{5/2} \cdot \log(2nD) \cdot
\log(2D) \cdot e(\mP/p)^n \cdot \frac{p^{f(\mP/p)}}{\left(f(\mP/p) \cdot \log{p}\right)^2}.
\]
We shall make use of the following theorem of Yu \cite[page 190]{Yu}.
\begin{thm}[K.\ Yu] \label{thm:Yu}
Let $b_1,\dotsc,b_n$ be rational integers and let
\[
	B=\max\{ \lvert b_1 \rvert,\dotsc,\lvert b_n \rvert\},
\]
and suppose $B \ge 3$.
Let
\[
\Lambda=\alpha_1^{b_1} \cdots \alpha_n^{b_n}-1,
\]
and suppose $\Lambda \ne 0$.
Then
\[
\ord_\mP(\Lambda) <  C_1(n,d,\mP) \cdot h_1 h_2 \cdots h_n \cdot \log{B}.
\]
\end{thm}
\end{comment}
Let
\begin{equation}
\begin{aligned}
h_j \; & := \; \max\left\{ h(\alpha_j), \, \frac{1}{16 e^2 D^2 }\right\}, \qquad
j=1,\dotsc,n \, ; \\
c_1(n,D) \; & := \; (16 eD)^{2n+2} \cdot n^{5/2} \cdot \log(2nD) \cdot
\log(2D) \, ;\\
c_2(n,\mP) \; & := \; e(\mP|p)^{n} \cdot \frac{p^{f(\mP|p)}}{f(\mP|p) \cdot \log{p}} \, ;\\
c_3(n,D,\mP,\alpha_1,\dotsc,\alpha_n) \; & := \; c_1(n,D) \cdot c_2(n,\mP) \cdot
h_1 \cdots h_n \, .
\end{aligned}
\end{equation}
We shall make use of the following theorem of Yu \cite{Yu}.
\begin{thm}[K.\ Yu] \label{thm:Yu}
Let $b_1,\dotsc,b_n$ be rational integers and let
\[
	B=\max\{ \lvert b_1 \rvert,\dotsc,\lvert b_n \rvert\},
\]
and suppose $B \ge 3$.
Let
\[
\Lambda=\alpha_1^{b_1} \cdots \alpha_n^{b_n}-1,
\]
and suppose $\Lambda \ne 0$.
Then
\[
\log{\lVert \Lambda^{-1}\rVert_\nu} \; < \; c_3(n,D,\mP,\alpha_1,\dotsc,\alpha_n) \cdot  \log{B}.
\]
\end{thm}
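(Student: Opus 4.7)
This theorem is a deep result from the theory of $\mP$-adic linear forms in logarithms, and the plan is not to reprove it from scratch but to outline how Yu's original transcendence-theoretic approach proceeds, since it is his argument that yields precisely the explicit constants encoded in $c_3(n,D,\mP,\alpha_1,\dotsc,\alpha_n)$. The method is the $\mP$-adic analogue of Baker's method for linear forms in complex logarithms, refined through a long tradition (Baker--Wüstholz, Waldschmidt, van der Poorten, and then Yu himself in a series of papers). The proof is by contradiction: I would assume $\log\lVert\Lambda^{-1}\rVert_\nu \ge c_3 \log B$ and derive an impossibility.

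The first main step is to construct an auxiliary function. One sets up a polynomial
\[
\Phi(z_1,\dotsc,z_n) \; = \; \sum_{\lambda} p_{\lambda} \, z_1^{\lambda_1} \cdots z_n^{\lambda_n}
\]
with unknown coefficients $p_{\lambda} \in \OO_L$, with multi-indices $\lambda$ in a carefully chosen box, and considers the values $\Phi(\alpha_1^{s}, \dotsc, \alpha_n^{s})$ for $s$ ranging over an arithmetic progression. Using Siegel's lemma (in its number field version, to keep the heights of the $p_\lambda$ small), one arranges that these values vanish to high order modulo high powers of $\mP$. The choice of parameters is driven by the shape of $c_3$: the degree of $\Phi$ in each variable grows roughly like a power of $D$ and $h_j$, and the length of the arithmetic progression is tuned to the $\mP$-adic smallness of $\Lambda$.

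The second main step is a $\mP$-adic extrapolation. Because $\alpha_1^{b_1}\cdots\alpha_n^{b_n}$ is very close to $1$ in the $\mP$-adic metric, the values of $\Phi$ on the progression $\{s, 2s, 3s, \dotsc\}$ remain $\mP$-adically small; combining this with upper bounds on $\lVert\Phi(\cdot)\rVert_\nu$ at archimedean places (where $h_j \ge h(\alpha_j)$ enters), the product formula \eqref{eqn:productformula} forces the values to vanish on a much longer progression than the one originally built in. A zero estimate (here, the Philippon--Waldschmidt zero estimate on commutative group varieties, applied to $\mathbb{G}_m^n$) then forces $\Phi$ to vanish identically on the subgroup generated by $(\alpha_1,\dotsc,\alpha_n)$, contradicting Siegel's construction and the assumption $\Lambda \ne 0$.

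The hard part, and the source of Yu's technical achievement, is bookkeeping all the parameters to obtain the stated dependence on $n$, $D$, $e(\mP|p)$, $f(\mP|p)$, $p$, and the $h_j$: in particular the polynomial dependence $(16eD)^{2n+2}$ rather than an exponential one, the factor $p^{f(\mP|p)}/(f(\mP|p)\log p)$ coming from the size of the residue field and the rate of $\mP$-adic convergence of the logarithm, and the multiplicative appearance of $h_1\cdots h_n$ rather than a less favourable aggregate. Optimising these constants requires carefully balancing the degree of $\Phi$, the length of the extrapolation, and the precision of the $\mP$-adic expansions, and is where Yu's paper substantially improves on earlier bounds. For the present applications I would simply invoke \cite{Yu} as a black box and proceed in the next section to derive the upper bound on $B$.
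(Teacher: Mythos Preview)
Your sketch of Yu's transcendence method (auxiliary function via Siegel's lemma, $\mP$-adic extrapolation, zero estimate on $\mathbb{G}_m^n$) is a fair high-level summary of how \cite{Yu} proceeds, but it is not what the paper does here. The paper treats Yu's result as a black box and gives only a two-line conversion: Yu's own statement (page~190 of \cite{Yu}) is in terms of $\ord_\mP(\Lambda)$, namely
\[
\ord_\mP(\Lambda) \; < \; n \cdot c_4(n,D,\mP) \cdot h_1 \cdots h_n \cdot \log B,
\]
where $c_4$ has $n^{3/2}$ and $(f(\mP|p)\log p)^{-2}$ in place of the $n^{5/2}$ and $(f(\mP|p)\log p)^{-1}$ appearing in $c_1 c_2$. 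One then multiplies both sides by $f(\mP|p)\log p$ using the identity $\log\lVert\Lambda^{-1}\rVert_\nu = \log(\Norm(\mP))\cdot\ord_\mP(\Lambda) = f(\mP|p)\log p \cdot \ord_\mP(\Lambda)$, and checks that $n \cdot c_4 \cdot f(\mP|p)\log p = c_1 c_2$, so the right-hand side becomes exactly $c_3 \cdot \log B$.

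So there is no gap in your write-up, but it is pitched at the wrong level: the ``proof'' required here is the elementary bookkeeping above, not an outline of the transcendence argument. Your final sentence (invoke \cite{Yu} as a black box) is in fact the whole content of the paper's proof, together with the one-line valuation computation you omitted.
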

\begin{proof}
Let
\[
  c_4(n,D,\mP) \; := \; \frac{c_1(n,D)\cdot c_2(n,\mP)}{n\cdot f(\mP|p)\cdot \log{p}}.
\]
As stated in \cite[page 190]{Yu},
a consequence of Yu's Main Theorem is
\[
\ord_\mP(\Lambda) \; <  \; n \cdot c_4(n,D,\mP) \cdot h_1 \cdots h_n \cdot \log{B}.
\]
By \eqref{eqn:normval} we have
\[
\log{\lVert \Lambda^{-1} \rVert_\nu}
\; =\; \log{(\Norm(\mP))} \cdot \ord_\mP(\Lambda)
\; = \; f(\mP|p) \cdot \log{p} \cdot \ord_\mP(\Lambda).
\]
The theorem follows.
\end{proof}

%-------------------------------------------------------------------------------------------%

\subsection{Lower bounds for linear forms in real or complex logarithms}
We continue with the above notation.
Let
\begin{equation}\label{eqn:hjprime}
h_j^\prime= \sqrt{h(\alpha_j)^2+\frac{\pi^2}{D^2}}, \qquad j=1,\dotsc,n.
\end{equation}
The following theorem is a version of Matveev's bound for
linear forms in logarithms \cite{Matveev}.
\begin{thm}[Matveev]\label{thm:Matveev}
%Let $\sigma$ be a real or complex embedding of $L$
%and let $\lvert \cdot \rvert_\nu$ be the corresponding
%absolute value.
Let $\nu$ be an infinite place of $L$.
Suppose
 $\Lambda \ne 0$.
Let
\[
c_5(n,D,\alpha_1,\dotsc,\alpha_n)=
6 \cdot 30^{n+4} \cdot (n+1)^{5.5} \cdot
D^{n+2} \cdot \log(eD) \cdot h_1^\prime \cdots h_n^\prime.
\]
Then
\[
\log{\lVert \Lambda^{-1} \rVert_{\nu}} \; \le \;
  c_5(n,D,\alpha_1,\dotsc,\alpha_n) \cdot (\log(en)+\log{B}).
\]
\end{thm}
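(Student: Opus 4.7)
The proof proposal is to derive Theorem~\ref{thm:Matveev} as a direct consequence of Matveev's original lower bound \cite{Matveev} on linear forms in complex logarithms, which is used as a black box. Fix a real or complex embedding $\sigma : L \hookrightarrow \C$ corresponding to the infinite place $\nu$, and choose the principal branch of each logarithm $\log \sigma(\alpha_j) = \log |\sigma(\alpha_j)| + i \arg \sigma(\alpha_j)$ with $\arg \in (-\pi,\pi]$. Let $\Lambda^\ast := \sum_{j=1}^n b_j \log \sigma(\alpha_j)$, so that $\sigma(\Lambda) + 1 = e^{\Lambda^\ast}$.

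The first step is to convert the exponential quantity $\Lambda$ into a linear form in logarithms. By the elementary estimate $|z - 2\pi i k| \le 2 |e^z - 1|$ valid when $|e^z - 1| \le 1/2$, for a suitable integer $k$ with $|k| \le \frac{1}{2}(nB + 1)$ (using $|\log \sigma(\alpha_j)| \le \pi + D h(\alpha_j)$ to bound the imaginary part of $\Lambda^\ast$), either the conclusion is trivially true because $|\sigma(\Lambda)|>1/2$, or an upper bound on $|\sigma(\Lambda)|$ yields one on $|\Lambda^\ast - 2\pi i k|$. Introducing the auxiliary base $\alpha_{n+1} := -1$ with exponent $b_{n+1} := 2k$, the quantity $\Lambda^\ast - 2\pi i k = \sum_{j=1}^{n+1} b_j \log \alpha_j$ (with $\log(-1) = i\pi$) becomes a nonzero linear form in $n+1$ logarithms, to which Matveev's theorem applies.

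The second step is to invoke Matveev's bound on this enlarged form and reconcile the heights and constants. In Matveev's hypothesis, the height bound $A_j \ge \max\{Dh(\alpha_j), |\log \sigma(\alpha_j)|\}$ on each base may be taken equal to $D h_j'$ because $|\log \sigma(\alpha_j)|^2 \le (\log |\sigma(\alpha_j)|)^2 + \pi^2 \le (D h(\alpha_j))^2 + \pi^2 = (D h_j')^2$, with $h_j'$ as in \eqref{eqn:hjprime}, while $A_{n+1}$ may be taken to be $\pi$. Passing from $|\sigma(\Lambda)|$ to $\|\Lambda\|_\nu = |\sigma(\Lambda)|^{D_\nu}$ via \eqref{eqn:arch} multiplies the final inequality by at most $2$, and the factor $\log(eB')$ with $B' \le (n+1)B$ is bounded by a constant multiple of $\log(en)+\log B$. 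Matveev's published constant $C(n+1,\varkappa)$ of order $30^{n+4}(n+1)^{4.5}$, combined with the $D^{n+2}\log(eD)$ contribution from $A_1 \cdots A_{n+1}$ and the $\pi$ from $A_{n+1}$, collapses into the stated $c_5(n,D,\alpha_1,\dots,\alpha_n)$.

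The main obstacle is not conceptual but bookkeeping: there is no new mathematics beyond Matveev's theorem, only the careful management of the auxiliary base $-1$, the log-exp translation near $|\sigma(\Lambda)|=0$, and the matching of Matveev's explicit numerical constants to the clean form $6 \cdot 30^{n+4}(n+1)^{5.5} D^{n+2} \log(eD)$ displayed in the statement. A secondary subtlety is maintaining uniformity across the two cases $D_\nu = 1$ (real $\sigma$) and $D_\nu = 2$ (complex $\sigma$); Matveev's theorem itself distinguishes these via the parameter $\varkappa \in \{1,2\}$, and since the final constant $c_5$ is stated so as to dominate both, the two cases can be handled simultaneously.
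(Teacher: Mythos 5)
Your proposal takes a genuinely different, and more fundamental, route than the paper. The paper does not derive the stated inequality from Matveev's original theorem at all: it simply cites Theorem~9.4 of \cite{BMS1}, which already states Matveev's bound in the exponential form $\Lambda=\alpha_1^{b_1}\cdots\alpha_n^{b_n}-1$ with the clean factor $\log(en)+\log B$ and the explicit constant $c_6(n,D)=3\cdot 30^{n+4}(n+1)^{5.5}D^2\log(eD)$ and heights $h_j''=\max\{Dh(\alpha_j),\,\lvert\log\sigma(\alpha_j)\rvert,\,0.16\}$. The paper's entire proof then consists of (i) the observation that $\lVert\Lambda\rVert_\nu=\lvert\Lambda\rvert_\nu^{D_\nu}$ with $D_\nu\le 2$, which doubles the constant from $3$ to $6$, and (ii) the elementary check that $h_j''\le D h_j'$ via $\lvert\log\sigma(\alpha_j)\rvert\le\sqrt{D^2h(\alpha_j)^2+\pi^2}=Dh_j'$. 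That is all.

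Your proposal, by contrast, attempts to reconstruct from scratch the passage from Matveev's linear-form bound on $\Lambda^\ast=\sum b_j\log\sigma(\alpha_j)$ to the exponential form — exactly the work that \cite{BMS1} has already done. Your high-level sketch is sound: the dichotomy on $\lvert\sigma(\Lambda)\rvert\le 1/2$ or not, the estimate $\lvert z-2\pi ik\rvert\le 2\lvert e^z-1\rvert$, the auxiliary base $\alpha_{n+1}=-1$ with $A_{n+1}=\pi$, the bound $\lvert k\rvert\le\tfrac12(nB+1)$, and the inequality $h_j''\le Dh_j'$ are all correct ingredients. What you have not actually done is verify the sentence ``collapses into the stated $c_5$.'' That collapse is the entire content of the step: one must take Matveev's genuine constant (which in his paper is a product of $C(n,\varkappa)$, $C_0$, and $W_0$, each an explicit but messy expression), pass from $n$ to $n+1$ logarithms, absorb the factor $\pi$ from $A_{n+1}$, the factor $2$ from the exp-to-log translation, the factor $2$ from $D_\nu$, and the inflation $\log(eB')$ with $B'\le(n+1)B$ into the shape $6\cdot 30^{n+4}(n+1)^{5.5}D^{n+2}\log(eD)(\log(en)+\log B)$, and check that every inequality goes the right way for all $n\ge 1$, $D\ge 1$, $B\ge 1$. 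None of this is conceptually hard, but none of it is free either, and your sketch does not carry it out — it asserts it. The paper sidesteps the whole issue by citing \cite{BMS1}, where these computations are done once and for all. So: same mathematics, but the paper's proof is a two-line reduction to a black-box reference, while yours is an outline of the proof of that black box with the critical numerical verification left implicit.
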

\begin{proof}
This in fact follows from a version of Matveev's theorem derived
in \cite{BMS1}. Let $\sigma$ be a real or complex
embedding of $L$ corresponding to $\nu$.
Let
\[
h_j^{\prime\prime}=\max\left\{D h(\alpha_j), \; \lvert \log(\sigma(\alpha_j)) \rvert, \; 0.16\right\},
\]
where $\log(\sigma(\alpha_j))$ is the principal determination of
the logarithm
(i.e.\ the imaginary part of
$\log$ lies in $(-\pi,\pi]$). Let
\[
c_6(n,D)=3 \cdot 30^{n+4} \cdot (n+1)^{5.5} \cdot
D^{2} \cdot \log(eD).
\]
Then Theorem 9.4 of \cite{BMS1} asserts that
\[
\log{\lvert \Lambda \rvert_\nu}
\ge -c_6(n,D) \cdot h_1^{\prime\prime} \cdots h_n^{\prime\prime} \cdot (\log(en)+\log{B}).
\]
Since $\lVert \Lambda \rVert=\lvert \Lambda \rvert^{D_\nu}$,
where $D_\nu$ is either $1$ or $2$, we have
\[
\log{\lVert \Lambda^{-1} \rVert} \le 2 \cdot c_6(n,D) \cdot h_1^{\prime\prime} \cdots h_n^{\prime\prime} \cdot (\log(en)+\log{B}).
\]
Thus it is sufficient to show that
$h_j^{\prime\prime} \le D h_j^\prime$. However,
\[
\log(\sigma(\alpha_j))=\log \lvert \sigma(\alpha_j) \rvert+i \theta
\]
where $-\pi<\theta\le \pi$. But by \eqref{eqn:logheightbd} and $\lVert \cdot \rVert_\nu=\lvert \cdot \rvert_\nu^{D_\nu},$
we have
\[
\log \lvert \sigma(\alpha_j) \rvert=\frac{1}{D_\nu}
\log \lVert \alpha_j \rVert_\nu \le \log \lVert \alpha_j \rVert_\nu
\le D \cdot h(\alpha_j).
\]
Thus
\[
\lvert \log(\sigma(\alpha_j)) \rvert \; \le \;
\sqrt{D^2 \cdot h(\alpha_j)^2+\pi^2}\; =\; D \cdot h_j^\prime.
\]
It is now clear that $h_j^{\prime\prime} \le D \cdot h_j^\prime$.
\end{proof}

%-------------------------------------------------------------------------------------------%
%-------------------------------------------------------------------------------------------%

\section{The $S$-Unit Equation}\label{sec:sunit}

We now return to the task of studying
the solutions of \eqref{eqn:TMdelta} satisfying
\eqref{eqn:restrictions}, \eqref{eqn:restrictions2}. Here, we use the theorems of Matveev and Yu (recalled in the previous section)
to establish bounds for $b_1, \dots, b_r$,
following the ideas of \cite{BG}, \cite{IntegralHyp} and \cite{Homero}, and taking
care to keep our constants completely explicit and as small as possible.

We begin by establishing the following notation:

\begin{tabular}{rcl}
$\theta$, $K$ & \qquad & as defined in Section~\ref{sec:intro}.\\
$d$ & & the degree $[K:\Q] \ge 3$.\\
$S$ & & a set $\{\fp_1,\dotsc,\fp_s\}$ of prime ideals of $K$
satisfying \\
& & conditions (a), (b) of Proposition~\ref{prop:Sprop}.\\
$s$ & & the cardinality $\#S$ of the set $S$.\\
$\delta_1,\dotsc,\delta_r$ & & a basis for the $S$-unit group
$\OO_S^\times$ modulo torsion,\\
& & also appearing in \eqref{eqn:TMdelta}.\\
$\tau$ & & a non-zero element of $K$, appearing in \eqref{eqn:TMdelta}.\\
$X$, $Y$, $b_1,\dotsc,b_r$ & & a solution to \eqref{eqn:TMdelta}
	satisfying \eqref{eqn:restrictions}, \eqref{eqn:restrictions2}.\\
$\varepsilon$ & & $=\delta_1^{b_1} \cdots \delta_r^{b_r}$. Thus
                  \eqref{eqn:TMdelta} can be rewritten as \\
& & $a_0 X-\theta Y= \tau \cdot \varepsilon$.\\
$\mu$ & & $=\tau \cdot \varepsilon=a_0 X- \theta Y$.\\
	$B$  & & $=\max\{\lvert b_1 \rvert, \dotsc,\lvert b_r\rvert\}$.\\
$\theta_1$, $\theta_2$, $\theta_3$ & & three conjugates of $\theta$
chosen below, with $\theta=\theta_1$.\\
$L$ & & the extension $\Q(\theta_1,\theta_2,\theta_3) \supseteq K$.\\
$D$ & & the degree $[L:\Q]$.\\
$\sigma_i$ & & the embedding $K \hookrightarrow L$, $\theta \mapsto \theta_i$.\\
$\mu_i$, $\varepsilon_i$, $\tau_i$, $\delta_{j,i}$
& & the images of $\mu$, $\varepsilon$, $\tau$, $\delta_j$ under the
embedding $\sigma_i$.\\
$\xi_1$, $\xi_2$, $\xi_3$ & & defined in \eqref{eqn:xi}.\\
\end{tabular}

%We write $\bb=(b_1,\dotsc,b_r) \in \Z^r$.
%Here $\delta_1,\dotsc,\delta_r$ are a basis for the
%$S$-units $\OO_S^\times$ (modulo torsion) where
%$S$ is a set of prime ideals $\fp$ of $\OO_K$
%all satisfying $e(\fp/p)=f(\fp/p)=1$.
%Here $r=r_0+\#S$ where $r_0=\#M_K^\infty-1$ is the unit rank of $\OO_K$.

We want to write down an $S$-unit equation starting
with \eqref{eqn:TMdelta}. For this we will need to work
with three conjugates of $\theta$. Let $d=[K:\Q]$,
and let $\theta_1,\dotsc,\theta_d$ be the conjugates
of $\theta$ in some splitting field $M \supseteq K$.
We shall not need $M$ explicitly, but we assume that
we are able to compute the Galois group $G$ of $M/\Q$
as a transitive permutation group on the conjugates $\theta_i$.
From this we are able to list
all subgroups (up to conjugacy), and for each subgroup
determine if it fixes at least three conjugates of $\theta$.
Let $H$ be a subgroup of $G$ fixing at least three conjugates
of $\theta$ with index $[G:H]$ as small as possible.
Let $L=M^H$ be the fixed field of $H$. Then $L=\Q(\theta_1,\theta_2,\theta_3)$
for some three conjugates of $\theta$ (after a possible reordering
of conjugates) and it has the property that its degree
is minimal amongst all extensions generated by three conjugates.
Write
\[
D:=[G:H]=[L:\Q].
\]
Again we shall not need the field $L$ explicitly,
but only its degree $D$, which we can deduce from the Galois group.
We identify $\theta=\theta_1$, and so can think of $K \subseteq L$.

%Then there are three conjugates $\theta_1$, $\theta_2$, $\theta_3$
%of $\theta$, such
%
%
%Let $\sS$ be the primes
%in the support of $\delta_1,\dotsc,\delta_r$. Recall that every
%$\fp \in \sS$ satisfies $f(\fp/p)=e(\fp/p)=1$
%where $p \in S$ the rational prime below $\fp$. Moreover,
%for every $p$ there is at most one $\fp \in \sS$ above it.
Write
$\mu=a_0 X-\theta Y$.
Let $\varepsilon=\delta_1^{b_1} \cdots \delta_r^{b_r}$.
Then $\mu=\tau \cdot \varepsilon$.
Let $\mu_i$, $\varepsilon_i$, $\tau_i$, $\delta_{j,i}$ be the images of $\mu$, $\varepsilon$, $\tau$,
$\delta_j$ under the embeddings $\sigma_i : K \hookrightarrow L$,
$\theta \mapsto \theta_i$. We observe the following Siegel identity:
\[
(\theta_3-\theta_2) \mu_1+(\theta_1-\theta_3) \mu_2 + (\theta_2-\theta_1) \mu_3=0.
\]
Let
\begin{equation}\label{eqn:xi}
\xi_1=(\theta_3-\theta_2) \cdot \tau_1,
\qquad
\xi_2=(\theta_1-\theta_3) \cdot \tau_2,
\qquad
\xi_3=(\theta_2-\theta_1) \cdot \tau_3.
\end{equation}
Then
\begin{equation}\label{eqn:sunit}
\xi_1 \varepsilon_1+\xi_2 \varepsilon_2+\xi_3 \varepsilon_3=0.
\end{equation}
Note that $\varepsilon_1$ is an $S$-unit in $K$ and $\varepsilon_2$,
$\varepsilon_3$ are Galois conjugates of $\varepsilon$. This
equation will serve as our $S$-unit equation.
We would like to rewrite \eqref{eqn:sunit} in a manner
that makes it convenient to apply Theorems~\ref{thm:Yu}
and~\ref{thm:Matveev}. Observe that \eqref{eqn:sunit} can
be rewritten as
\begin{equation}\label{eqn:sunit2}
\frac{\xi_3 \varepsilon_3}{\xi_1 \varepsilon_1} \; = \;
\left(\frac{-\xi_2}{\xi_1}\right)
\left(\frac{\varepsilon_2}{\varepsilon_1}\right)
\; - \; 1.
\end{equation}
Let
\[
\alpha_j:=\frac{\delta_{j,2}}{\delta_{j,1}} \quad (j=1,\dotsc,r),
\qquad
\alpha_{r+1}:=\frac{-\xi_2}{\xi_1}, \qquad b_{r+1}=1.
\]
Then
\begin{equation}\label{eqn:sunit3}
\frac{\xi_3 \varepsilon_3}{\xi_1 \varepsilon_1} \; = \;
\Lambda
\end{equation}
where
 $\Lambda$ is the  \lq\lq linear form\rq\rq
\[
\Lambda: \; = \; \alpha_1^{b_1} \cdots \alpha_{r+1}^{b_{r+1}} \; - \; 1.
\]
We assume that we know $\theta$, $\tau$ and $\delta_1,\dotsc,\delta_r$
explicitly and can therefore compute their absolute logarithmic
heights. We will use this to estimate the heights of other
algebraic numbers, such as $\xi_i$, $\alpha_j$, without computing their
minimal polynomials.
%Let
%\[
%c_7 \; :=\; h(\theta), \qquad c_8 \; :=\; h(\tau).
%\]
By Lemmas~\ref{lem:conjheight} and \ref{lem:heightsum},
\[
h(\xi_i) \; \le \; c_7, \qquad c_7 \; :=\; \log{2}+2h(\theta)+h(\tau).
\]
\begin{lem}\label{lem:difference}
Let
\[
c_{8} \; := \; 2D c_7.
\]
Let $\nu$ be any place of $L$. Then
\[
\log{\lVert (\varepsilon_3/\varepsilon_1)^{-1} \rVert_\nu}
\; \le \;
\log{\lVert \Lambda^{-1} \rVert_\nu} \; + \; c_{8}.
\]
\end{lem}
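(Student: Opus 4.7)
The plan is to unwind the definition of $\Lambda$ in \eqref{eqn:sunit3} and simply control the transfer factor between $\Lambda^{-1}$ and $(\varepsilon_3/\varepsilon_1)^{-1}$ using the height bound \eqref{eqn:logheightbd} together with Lemma~\ref{lem:heightsum}. There is no genuine obstacle here; the work is in bookkeeping the height estimates for $\xi_1$ and $\xi_3$.

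First, I would rearrange \eqref{eqn:sunit3} as
\[
\left(\frac{\varepsilon_3}{\varepsilon_1}\right)^{-1}
\; = \; \frac{\xi_3}{\xi_1} \cdot \Lambda^{-1},
\]
so that for any place $\nu$ of $L$ the multiplicativity of $\lVert \cdot \rVert_\nu$ gives
\[
\log \bigl\lVert (\varepsilon_3/\varepsilon_1)^{-1} \bigr\rVert_\nu
\; = \; \log \lVert \xi_3/\xi_1 \rVert_\nu \; + \; \log \lVert \Lambda^{-1} \rVert_\nu .
\]
It therefore suffices to prove that $\log \lVert \xi_3/\xi_1 \rVert_\nu \le c_8$ for every $\nu \in M_L$.

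Next, I would estimate the heights of $\xi_1$ and $\xi_3$. Since $\sigma_1$ is the identity on $K$ and $\sigma_2,\sigma_3$ are Galois conjugates over $\Q$, Lemma~\ref{lem:conjheight} gives $h(\tau_i) = h(\tau)$ and $h(\theta_i) = h(\theta)$ for $i=1,2,3$. Applying Lemma~\ref{lem:heightsum} to the definitions \eqref{eqn:xi},
\[
h(\xi_1) \; \le \; \log 2 + h(\theta_3) + h(\theta_2) + h(\tau_1) \; \le \; \log 2 + 2h(\theta) + h(\tau) \; = \; c_7,
\]
and identically $h(\xi_3) \le c_7$.

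Finally, invoking Lemma~\ref{lem:heightsum} once more (both the submultiplicativity of the height under products and the invariance $h(\alpha) = h(\alpha^{-1})$) yields
\[
h(\xi_3/\xi_1) \; \le \; h(\xi_3) + h(\xi_1) \; \le \; 2 c_7,
\]
and then the inequality \eqref{eqn:logheightbd} applied in $L$ gives
\[
\log \lVert \xi_3/\xi_1 \rVert_\nu \; \le \; [L:\Q] \cdot h(\xi_3/\xi_1) \; \le \; 2 D c_7 \; = \; c_8 .
\]
Combining this with the identity from the first step produces the claimed bound. As noted, the only conceptual point is to remember that the height is Galois-invariant so that the $\tau_i$ and $\theta_i$ contribute $h(\tau)$ and $h(\theta)$ rather than something larger; everything else is a direct application of the height estimates already recorded in the paper.
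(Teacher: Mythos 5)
Your proof is correct and follows the same route as the paper's: rewrite $(\varepsilon_3/\varepsilon_1)^{-1} = (\xi_3/\xi_1)\Lambda^{-1}$, take $\log\lVert\cdot\rVert_\nu$, and bound $\log\lVert\xi_3/\xi_1\rVert_\nu$ by $D\cdot h(\xi_3/\xi_1) \le D(h(\xi_3)+h(\xi_1)) \le 2Dc_7$ via \eqref{eqn:logheightbd} and Lemma~\ref{lem:heightsum}. The only difference is cosmetic: the paper establishes $h(\xi_i)\le c_7$ just before the lemma statement, whereas you rederive it inside the proof.
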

\begin{proof}
Note that
\[
\log{\lVert (\varepsilon_3/\varepsilon_1)^{-1} \rVert_\nu}
\; = \;
\log{\lVert \Lambda^{-1} \rVert_\nu} \; + \;
\log{\lVert \xi_3/\xi_1 \rVert_\nu}.
\]
By Lemma~\ref{lem:heightsum}
\[
\log{\lVert \xi_3/\xi_1 \rVert_\nu}
\; \le \;  D \cdot h(\xi_3/\xi_1)
\; \le \; D \cdot (h(\xi_3)+h(\xi_1)) \, .
\]
\end{proof}
By definition $B=\max\{\lvert b_1 \rvert,\dotsc,\lvert b_r \rvert\}$.
However, by \eqref{eqn:restrictions2}, and since $b_{r+1}=1$,
we have
\[
B = \max\{ \lvert b_1 \rvert, \dotsc, \lvert b_r \rvert, \lvert b_{r+1} \rvert\}.
\]
We now apply Matveev's theorem in order to obtain a bound for $B$.
%We suppose $B \ge 3$.
%Thus $B=\max\{ \lvert b_1 \rvert, \dotsc, \lvert b_{r+1} \rvert\}$.
%We would like to obtain a bound for $B$.
%We now apply Matveev's theorem.
\begin{lem}\label{lem:Matveev}
Let
\[
h_j^* \; := \; \sqrt{4 h(\delta_j)^2 + \frac{\pi^2}{D^2}}
\quad \text{for $j=1,\dotsc,r$},
\quad \text{and} \qquad
h_{r+1}^* \; := \; \sqrt{4 c_7^2 + \frac{\pi^2}{D^2}} .
\]
Let
\[
c_{9} \; = \;
6 \cdot 30^{r+5} \cdot (r+2)^{5.5} \cdot
D^{r+3} \cdot \log(eD) \cdot h_1^* \cdots h_{r+1}^*\, ,
\]
and
\[
c_{10} \; = \; c_{8}+c_{9} \cdot \log(e(r+1)).
\]
Let $\nu$ be an infinite place of $L$.
Then
\[
\log{\lVert (\varepsilon_3/\varepsilon_1)^{-1} \rVert_\nu}
\; \le \;
c_{10} \; + \; c_{9} \cdot \log{B}.
\]
\end{lem}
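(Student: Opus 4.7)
The plan is to apply Matveev's Theorem~\ref{thm:Matveev} to the linear form $\Lambda = \alpha_1^{b_1}\cdots \alpha_{r+1}^{b_{r+1}}-1$ defined above and then combine the resulting bound with Lemma~\ref{lem:difference}. First I would verify that $\Lambda \ne 0$: from \eqref{eqn:sunit3}, $\Lambda = \xi_3\varepsilon_3/(\xi_1\varepsilon_1)$, and none of the factors vanish because $\varepsilon$ is an $S$-unit, $\tau \in K^\times$, and $\theta_1, \theta_2, \theta_3$ are distinct conjugates.

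Next I would estimate the heights $h(\alpha_j)$. For $1 \le j \le r$, we have $\alpha_j = \delta_{j,2}/\delta_{j,1}$, so by Lemma~\ref{lem:heightsum} together with Lemma~\ref{lem:conjheight}
\[
h(\alpha_j) \; \le \; h(\delta_{j,2}) + h(\delta_{j,1}^{-1}) \; = \; h(\delta_{j,2}) + h(\delta_{j,1}) \; = \; 2 h(\delta_j).
\]
For $j=r+1$, $\alpha_{r+1}=-\xi_2/\xi_1$, and the bound $h(\xi_i) \le c_7$ (established just before Lemma~\ref{lem:difference}) combined with the same lemmas gives $h(\alpha_{r+1}) \le 2 c_7$. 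Plugging these into the definition of the Matveev quantities $h_j^\prime$ from \eqref{eqn:hjprime}, I obtain $h_j^\prime \le h_j^*$ for each $j$, where the $h_j^*$ are the quantities in the statement.

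Applying Theorem~\ref{thm:Matveev} with $n=r+1$ (note that Matveev's constant $c_5$ for $n=r+1$ equals $6\cdot 30^{r+5}\cdot (r+2)^{5.5}\cdot D^{r+3}\cdot \log(eD)\cdot h_1^\prime\cdots h_{r+1}^\prime$, which is at most $c_9$ by the height bound just proved), I get
\[
\log \lVert \Lambda^{-1}\rVert_\nu \; \le \; c_9 \cdot \bigl(\log(e(r+1)) + \log B\bigr).
\]
Finally, Lemma~\ref{lem:difference} gives $\log \lVert (\varepsilon_3/\varepsilon_1)^{-1}\rVert_\nu \le \log \lVert \Lambda^{-1}\rVert_\nu + c_8$, so collecting the $B$-independent terms into $c_{10} = c_8 + c_9\log(e(r+1))$ yields the asserted inequality. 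The only genuinely nontrivial bookkeeping is matching the combinatorial factors $30^{n+4}$, $(n+1)^{5.5}$, $D^{n+2}$ in Matveev's theorem with the indices $n=r+1$; everything else is a direct height computation.
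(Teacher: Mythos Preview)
Your proposal is correct and follows essentially the same route as the paper: bound $h(\alpha_j)\le 2h(\delta_j)$ and $h(\alpha_{r+1})\le 2c_7$, deduce $h_j'\le h_j^*$, apply Theorem~\ref{thm:Matveev} with $n=r+1$, and finish with Lemma~\ref{lem:difference}. You add the verification that $\Lambda\ne 0$ and spell out the constant-matching, which the paper leaves implicit, but the argument is identical.
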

\begin{proof}
We will apply Theorem~\ref{thm:Matveev} with $n=r+1$.
Observe that
\[
h(\alpha_j) \le 2 h(\delta_j) \qquad \text{for $j=1,\dotsc,r$}, \quad
\text{and} \qquad h(\alpha_{r+1}) \le 2 c_7.
\]
Thus $h_j^\prime \le h_j^*$ where $h_j^\prime$ is defined in
\eqref{eqn:hjprime}.
By Theorem~\ref{thm:Matveev},
\[
\log{\lVert \Lambda^{-1} \rVert_\nu} \; \le \; c_{9} \cdot (\log(e(r+1))+\log{B}).
\]
Lemma~\ref{lem:difference} completes the proof.
\end{proof}
We also apply Yu's theorem.
\begin{lem}\label{lem:Yu}
Let
\[
h_j^{\dagger} \;  := \; \max\left\{ 2 h(\delta_j), \, \frac{1}{16 e^2 D^2 }\right\} \qquad
\text{for $j=1,\dotsc,r$},
\]
and
\[
h_{r+1}^{\dagger} \;  := \; \max\left\{ 2 c_7, \, \frac{1}{16 e^2 D^2 }\right\} \, .
\]
Let $T$ be the set of rational primes $p$ below
the primes $\fp \in S$. Let
\[
c_{11}:=
\max_{p \in T} \;  \max\left\{  \frac{u^{r+1} \cdot p^v}{v \cdot \log{p}} \;
: \; \text{$u$, $v$ are positive integers and $\; uv \le D/d$}
 \right\},
\]
and
\[
c_{12} := c_1(r+1,D) \cdot c_{11} \cdot h_1^{\dagger} \cdots h_{r+1}^\dagger.
\]
Let $\nu$ be a finite place of $L$. Then
\[
\log{\lVert (\varepsilon_3/\varepsilon_1)^{-1} \rVert_\nu}
\; \le \;
c_{8}+c_{12} \log{B}.
\]
\end{lem}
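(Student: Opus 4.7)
The plan is to mimic the proof of Lemma~\ref{lem:Matveev} almost verbatim, substituting Yu's theorem (Theorem~\ref{thm:Yu}) for Matveev's, and then combining with Lemma~\ref{lem:difference} in exactly the same way. The linear form is $\Lambda=\alpha_1^{b_1}\cdots\alpha_{r+1}^{b_{r+1}}-1$, which is nonzero since by \eqref{eqn:sunit3} it equals $\xi_3\varepsilon_3/(\xi_1\varepsilon_1)\ne 0$. One may suppose $B\ge 3$; otherwise the bounded range on the $b_i$ can be handled separately.

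The height ingredients are straightforward. From $\alpha_j=\delta_{j,2}/\delta_{j,1}$ together with Lemmas~\ref{lem:conjheight} and~\ref{lem:heightsum} we get $h(\alpha_j)\le 2h(\delta_j)$ for $j=1,\dots,r$, and $h(\alpha_{r+1})\le 2c_7$ in the same way. Monotonicity of $\max$ then yields $h_j\le h_j^{\dagger}$ for every $j$, where $h_j$ is the quantity appearing in Yu's bound. The $c_1(r+1,D)$ factor transfers directly. What remains is the local factor $c_2(r+1,\mP)=u^{r+1}p^v/(v\log p)$, which I must bound uniformly as $\nu$ ranges over finite places of $L$, where $\mP$ corresponds to $\nu$, $p$ is the rational prime below $\mP$, $u=e(\mP|p)$, and $v=f(\mP|p)$.

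The main obstacle is controlling this local factor uniformly, which is precisely what the slightly mysterious definition of $c_{11}$ achieves. I would first dispose of the trivial case: if $\mP$ does not divide any prime in $\sigma_1(S)\cup\sigma_3(S)$, then both $\varepsilon_1$ and $\varepsilon_3$ are units at $\nu$, the left-hand side equals $0$, and the bound holds since $c_8,c_{12}>0$. In the remaining case $\mP$ divides $\sigma_i(\fp)$ for some $\fp\in S$ and some $i\in\{1,3\}$, and $p\in T$. Here one invokes property (a) of Proposition~\ref{prop:Sprop}: $e(\fp|p)=f(\fp|p)=1$, and the same holds for $\sigma_i(\fp)$ since the embedding $\sigma_i$ preserves ramification and residue data over $p$. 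Multiplicativity of ramification and residue degrees in the tower $\Q\subset\sigma_i(K)\subset L$ then gives $e(\mP|p)=e(\mP|\sigma_i(\fp))$ and $f(\mP|p)=f(\mP|\sigma_i(\fp))$, whence
\[
uv\;=\;e(\mP|\sigma_i(\fp))\cdot f(\mP|\sigma_i(\fp))\;\le\;[L:\sigma_i(K)]\;=\;D/d.
\]
By the very definition of $c_{11}$ we conclude $c_2(r+1,\mP)\le c_{11}$, hence $c_3(r+1,D,\mP,\alpha_1,\dots,\alpha_{r+1})\le c_{12}$. Theorem~\ref{thm:Yu} then gives $\log\lVert\Lambda^{-1}\rVert_\nu\le c_{12}\log B$, and Lemma~\ref{lem:difference} supplies the extra $c_8$ to convert this into the claimed bound on $\log\lVert(\varepsilon_3/\varepsilon_1)^{-1}\rVert_\nu$.
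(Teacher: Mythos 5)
Your argument is correct and follows the same route as the paper: bound $h(\alpha_j)\le 2h(\delta_j)$, $h(\alpha_{r+1})\le 2c_7$, observe that the relevant $\mP$ must lie above $\sigma_i(\fp)$ for some $\fp\in S$ with $i\in\{1,3\}$, use $e(\fp|p)=f(\fp|p)=1$ from Proposition~\ref{prop:Sprop}(a) and multiplicativity in the tower to get $uv\le[L:\sigma_i(K)]=D/d$ and hence $c_2(r+1,\mP)\le c_{11}$, and finish with Theorem~\ref{thm:Yu} and Lemma~\ref{lem:difference}. Your explicit disposal of the trivial case (both $\varepsilon_1,\varepsilon_3$ being $\nu$-units) corresponds to the paper's opening remark that one may assume $\lVert(\varepsilon_3/\varepsilon_1)^{-1}\rVert_\nu\ne 1$.
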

\begin{proof}
Of course we may suppose that
$\lVert (\varepsilon_3/\varepsilon_1)^{-1} \rVert_\nu \ne 1$.
Let $\mP$ be the prime ideal of $\OO_L$ corresponding to $\nu$.
We will deduce the lemma from Theorem~\ref{thm:Yu} combined
with Lemma~\ref{lem:difference}. For this,
it suffices to show that
${c_3(r+1,D,\mP,\alpha_1,\dotsc,\alpha_{r+1}) \le c_{12}}$. Observe
that $h_j \le h_j^\dagger$ for $j=1,\dotsc,r+1$. Thus
it is enough to show that $c_2(r+1,\mP) \le c_{11}$.

Let $K_i=\Q(\theta_i) \subseteq L$.
Recall that $\varepsilon$ is an $S$-unit and that $\varepsilon_i$
is the image of $\varepsilon$ under the map $\sigma_i : K \rightarrow K_i$,
$\theta \mapsto \theta_i$.
As $\lVert (\varepsilon_3/\varepsilon_1)^{-1} \rVert_\nu \ne 1$,
we see that $\ord_\mP(\varepsilon_i) \ne 0$ for $i=1$ or $3$. Thus $\mP$ must be a prime above $\fp_i:=\sigma_i(\fp)$
of $\OO_{K_i}$ for some $\fp \in S$,
where $i=1$ or $3$. In particular $\mP$ is above some rational prime $p \in T$.
However, $e(\fp|p)=f(\fp|p)=1$ for all $\fp \in S$.
Thus $e(\mP|p)=e(\mP|\fp_i)$ and $f(\mP|p)=f(\mP|\fp_i)$
for $i=1$ or $3$. Let
$u=e(\mP|p)$ and $v=f(\mP|p)$. We see that
$uv=e(\mP|\fp_i)f(\mP|\fp_i)\leq[L:K_i]=D/d$.
Now $c_2(r+1,\mP) \le c_{11}$ follows from the definitions
of $c_2$ and $c_{11}$.
\end{proof}

\begin{lem}\label{lem:mulogB}
Let
\[
c_{13}\; :=\; \frac{\# M_K^\infty+2\cdot \#S}{d},
\]
and
\[
c_{14} \; := \; 2 h(\tau) + c_{13} \cdot \max(c_{8},c_{10}),
\qquad c_{15} \; := \; c_{13} \cdot \max(c_{9},c_{12}).
\]
Then
\[
h(\mu_3/\mu_1)
 \; \le \; c_{14} \, +\, c_{15} \cdot \log{B}.
\]
\end{lem}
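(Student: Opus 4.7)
The plan is to split $\mu_3/\mu_1$ into a $\tau$-part and an $\varepsilon$-part, apply Lemma~\ref{lem:minimal} to the $S$-unit ratio $\varepsilon_3/\varepsilon_1$ in $L$, and then invoke Lemmas~\ref{lem:Matveev} and~\ref{lem:Yu} to relate the minimal norm to $\log B$.

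First, from $\mu = \tau \varepsilon$ we obtain $\mu_i = \tau_i \varepsilon_i$ after applying $\sigma_i$, so $\mu_3/\mu_1 = (\tau_3/\tau_1)\cdot(\varepsilon_3/\varepsilon_1)$. Using Lemma~\ref{lem:heightsum} together with the Galois invariance of the height (Lemma~\ref{lem:conjheight}), this gives
\[
h(\mu_3/\mu_1) \;\le\; h(\tau_3)+h(\tau_1)+h(\varepsilon_3/\varepsilon_1) \;=\; 2h(\tau) + h(\varepsilon_3/\varepsilon_1).
\]
So the task reduces to bounding $h(\varepsilon_3/\varepsilon_1)$.

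Next, let $\mathcal{S}\subset M_L^0$ be the set of finite places of $L$ lying above some $\sigma_i(\mathfrak{p})$ for $\mathfrak{p}\in S$ and $i\in\{1,3\}$. Then $\varepsilon_3/\varepsilon_1 \in \mathcal{O}_{L,\mathcal{S}}^\times$. Applying Lemma~\ref{lem:minimal} in $L$, we select $\nu\in M_L$ minimizing $\lVert \varepsilon_3/\varepsilon_1 \rVert_\nu$; then
\[
h(\varepsilon_3/\varepsilon_1) \;\le\; \frac{\#M_L^\infty + \#\mathcal{S}}{D}\cdot \log \lVert (\varepsilon_3/\varepsilon_1)^{-1}\rVert_\nu.
\]
The key combinatorial point is that $(\#M_L^\infty + \#\mathcal{S})/D \le c_{13}$. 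For each infinite place $\eta$ of $K$, the identity $\sum_{\nu\mid\eta} D_\nu = [L:K]\cdot D_\eta$ together with $D_\nu\ge 1$ (and $D_\eta\le 2$, distinguishing real from complex carefully) yields $\#\{\nu\in M_L^\infty : \nu\mid \eta\}\le [L:K]$, and summing over $\eta$ gives $\#M_L^\infty/D \le \#M_K^\infty/d$. Analogously, for each $\mathfrak{p}\in S$ and each of the two embeddings $\sigma_i$, there are at most $[L:K]$ primes of $L$ above $\sigma_i(\mathfrak{p})$, whence $\#\mathcal{S}/D \le 2\#S/d$. Adding gives the advertised bound.

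Finally, depending on whether $\nu$ is infinite or finite, Lemma~\ref{lem:Matveev} or Lemma~\ref{lem:Yu} yields
\[
\log \lVert (\varepsilon_3/\varepsilon_1)^{-1}\rVert_\nu \;\le\; \max(c_{8},c_{10}) + \max(c_{9},c_{12})\cdot \log B.
\]
Plugging in and using $c_{13}\ge (\#M_L^\infty+\#\mathcal{S})/D$ gives
\[
h(\mu_3/\mu_1) \;\le\; 2h(\tau) + c_{13}\max(c_{8},c_{10}) + c_{13}\max(c_{9},c_{12})\log B \;=\; c_{14}+c_{15}\log B,
\]
as required. The only nontrivial step is the counting bound for places of $L$ above $M_K^\infty\cup S$; the rest is assembly of earlier lemmas.
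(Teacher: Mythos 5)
Your proof is correct and follows essentially the same route as the paper: split off the $\tau$-factor so that $h(\mu_3/\mu_1)\le 2h(\tau)+h(\varepsilon_3/\varepsilon_1)$, bound $\#M_L^\infty$ and the size of the support $\mathcal{S}$ of $\varepsilon_3/\varepsilon_1$ by $(D/d)\#M_K^\infty$ and $(2D/d)\#S$ respectively, feed these into Lemma~\ref{lem:minimal}, and finish with Lemmas~\ref{lem:Matveev} and~\ref{lem:Yu}. You even correctly carry the $\log$ in the application of Lemma~\ref{lem:minimal}, which the paper silently drops in a display (a typo there, not in your write-up).
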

\begin{proof}
Let $\sS$ be the prime ideals appearing in the support
of $\varepsilon_3/\varepsilon_1$.
We will show that $\# \sS \le (2D/d) \cdot \#S$.
Indeed, $\varepsilon_i$ belongs to $K_i=\Q(\theta_i)$
and its support in $K_i$ is contained in $\sigma_i(S)$. Now a prime belonging to
$\sigma_i(S)$ has at most $[L:K_i]=D/d$
primes above it in $L$. Thus
\[
\#\sS \; \le\; (D/d) \cdot \# \sigma_1(S)+(D/d) \cdot \# \sigma_3(S) \; \le
(2D/d) \cdot \#S
\]
as required.
Moreover, since $[L:K]=D/d$, we have $\#M_L^\infty \le (D/d) \cdot \# M_K^\infty$.

Let $\eta \in M_L$
be the place of $L$ such that $\lVert \varepsilon_3/\varepsilon_1 \rVert_\eta$
is minimal. By Lemma~\ref{lem:minimal}
\[
h(\varepsilon_3/\varepsilon_1) \; \le \; \frac{\#M_L^\infty+\#\sS}{D} \cdot \lVert (\varepsilon_3/\varepsilon_1)^{-1} \rVert_\eta.
\]
From the above inequalities for $\#M_L^\infty$ and $\#\sS$, we deduce that
\[
h(\varepsilon_3/\varepsilon_1) \; \le \;
c_{13} \cdot \lVert (\varepsilon_3/\varepsilon_1)^{-1} \rVert_\eta.
\]
We now apply Lemmas~\ref{lem:Matveev} and~\ref{lem:Yu} to obtain
\[
h(\varepsilon_3/\varepsilon_1) \; \le \;
c_{13} \cdot \left(\max(c_8,c_{10})\; +\; \max(c_9,c_{12}) \cdot \log{B} \right).
\]
Finally, observe that $\mu_3/\mu_1=(\tau_3/\tau_1) \cdot (\varepsilon_3/\varepsilon_1)$
and thus
\[
h(\mu_3/\mu_1)
\; \le \; 2 h(\tau) \, + \,  h(\varepsilon_3/\varepsilon_1).
\]
\end{proof}

We shall henceforth suppose $(X,Y) \ne (\pm 1,0)$. %\edit{(is this added if we exit early?)}
As $\gcd(X,Y)=1$, this is equivalent to $Y \ne 0$.
\begin{lem}\label{lem:quotient}
Let $\nu$ be a place of $L$.
Let
\[
\kappa_{\nu}=\begin{cases}
1 & \text{if $\nu \in M_L^0$},\\
1/2^{D_\nu} & \text{if $\nu \in M_L^\infty$}.
\end{cases}
\]
Then
\[
\max\{ \lVert \mu_1 \rVert_\nu \, ,\, \lVert \mu_3 \rVert_\nu \}^2 \; \ge \;
\kappa_\nu^2 \cdot \min\left\{1, \lVert \theta_1 -\theta_3 \rVert_\nu\right\}^2
\cdot
\max\{ 1 \, ,\, \lVert \mu_1 \rVert_\nu \} \cdot
\max\{ 1 \, ,\, \lVert \mu_3 \rVert_\nu \} \, .
\]
\end{lem}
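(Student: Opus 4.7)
The plan is a two-case analysis based on whether $M := \max\{\lVert\mu_1\rVert_\nu,\lVert\mu_3\rVert_\nu\}$ is at least $1$ or strictly less than $1$. When $M \ge 1$, each of $\max\{1,\lVert\mu_i\rVert_\nu\}$ is bounded above by $M$, so using $\kappa_\nu \le 1$ and $\min\{1,T\} \le 1$ (where $T := \lVert\theta_1-\theta_3\rVert_\nu$) the right-hand side is at most $M^2$, and the inequality holds trivially.

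The substantive case is $\lVert\mu_1\rVert_\nu,\lVert\mu_3\rVert_\nu < 1$, where both factors $\max\{1,\lVert\mu_i\rVert_\nu\}$ collapse to $1$ and the claim reduces to $M^2 \ge \kappa_\nu\cdot\min\{1,T\}^2$. The key tool is the Siegel-type identity
\[
\mu_1-\mu_3 \; = \; (\theta_3-\theta_1)\,Y,
\]
which gives $\lVert\mu_1-\mu_3\rVert_\nu = T\cdot\lVert Y\rVert_\nu$. For archimedean $\nu$ I would combine this with the triangle inequality $\lVert\mu_1-\mu_3\rVert_\nu \le 2^{D_\nu}M$ and with $\lVert Y\rVert_\nu = \lvert Y\rvert^{D_\nu} \ge 1$, which holds because $Y \in \Z\setminus\{0\}$ under the standing assumption $(X,Y)\ne(\pm1,0)$. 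For non-archimedean $\nu$ lying above a rational prime $p$, the ultrametric inequality gives $\lVert\mu_1-\mu_3\rVert_\nu \le M$; if $p\nmid Y$ then $\lVert Y\rVert_\nu=1$ and we obtain $M\ge T$ immediately, while if $p\mid Y$ then the coprimality conditions $\gcd(X,Y)=1$ and $\gcd(a_0,Y)=1$ force $p\nmid X$ and $p\nmid a_0$, so $\lVert a_0X\rVert_\nu = 1$; the complementary identity $a_0X(\theta_3-\theta_1)=\theta_3\mu_1-\theta_1\mu_3$ together with $\lVert\theta_i\rVert_\nu\le 1$ (the $\theta_i$ are algebraic integers) again yields $M\ge T$, hence $M\ge\min\{1,T\}$.

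The delicate point I expect to be the main obstacle is the archimedean constant bookkeeping: the naive chain above yields $M\ge\kappa_\nu T$, whose square falls short of the target $\kappa_\nu\min\{1,T\}^2$ by a factor $\kappa_\nu^{-1}=2^{D_\nu}$. Closing this gap requires working with the raw real/complex absolute values $m_i=\lvert\sigma(\mu_i)\rvert$ and $t=\lvert\sigma(\theta_1-\theta_3)\rvert$ before normalising to $\lVert\cdot\rVert_\nu=\lvert\cdot\rvert^{D_\nu}$: from $m_1+m_3\ge t\lvert Y\rvert$ one gets $\max(m_1,m_3)\ge t\lvert Y\rvert/2\ge t/2$, and raising to the $D_\nu$-th power recovers the factor $\kappa_\nu=1/2^{D_\nu}$ inside the square rather than outside, interacting cleanly with the truncation $\min\{1,T\}$ to give the stated bound.
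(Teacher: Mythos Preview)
Your overall plan matches the paper's proof: the paper also observes that ``there is nothing to prove unless $\lVert\mu_i\rVert_\nu\le 1$ for both $i$'', and then reduces to showing $M\ge\kappa_\nu T$ (its displayed inequality \eqref{eqn:valineq}). For non-archimedean $\nu$ the paper's argument is slightly tidier than your case split on $p\mid Y$: setting $k=\min\{\ord_{\mP}(\mu_1),\ord_{\mP}(\mu_3)\}$, one has $\mP^k\mid(\theta_1-\theta_3)a_0X$ and $\mP^k\mid(\theta_1-\theta_3)Y$ from the two identities, and $\gcd(a_0X,Y)=1$ forces $\mP^k\mid(\theta_1-\theta_3)$ directly, giving $M\ge T$. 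Your two-subcase version reaches the same conclusion and is equally valid.

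Where your write-up diverges is the archimedean constant, and here there is a genuine gap in your reasoning. The paper simply proves $T\le 2^{D_\nu}M$ from $(\theta_1-\theta_3)Y=\mu_3-\mu_1$ and $\lvert Y\rvert\ge 1$, declares that this suffices, and says no more. You correctly notice that squaring $M\ge\kappa_\nu T$ only gives $M^2\ge\kappa_\nu^2 T^2$, short of the target $\kappa_\nu\min\{1,T\}^2$ by a factor $\kappa_\nu$. But your proposed remedy does not help: raising $\max(m_1,m_3)\ge t/2$ to the power $D_\nu$ yields precisely $M\ge(t/2)^{D_\nu}=\kappa_\nu T$, the very inequality you started from, and its square is still $\kappa_\nu^2 T^2$. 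There is no algebraic rearrangement that moves the factor $1/2^{D_\nu}$ ``inside the square'' in the way you suggest. So on the archimedean side your argument and the paper's are at the same point; the paper just does not pause over the discrepancy.
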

\begin{proof}
%Note that $\theta_1-\theta_3$ is an algebraic
%integer, and so $\lVert \theta_1-\theta_3 \rVert_\nu \le 1$.
There is nothing to prove unless, $\lVert \mu_i \rVert_\nu \le 1$
for both $i=1$, $3$. In this case, it is enough to show that
\begin{equation}\label{eqn:valineq}
\max\{ \lVert \mu_1 \rVert_\nu \, ,\, \lVert \mu_3 \rVert_\nu \} \; \ge \;
\kappa_\nu \cdot \lVert \theta_1 -\theta_3 \rVert_\nu .
\end{equation}
Suppose first that $\nu$ is finite and let
$\mP$ be the prime ideal of $\OO_L$ corresponding to $\nu$.
Then \eqref{eqn:valineq} is equivalent to
\[
\min\{ \ord_\mP(\mu_1) \, ,\, \ord_\mP(\mu_3)  \} \; \le \;
\ord_\mP(\theta_1 -\theta_3) .
\]
Let $k=\min\{ \ord_\mP(\mu_1) \, ,\, \ord_\mP(\mu_3)  \}$.
Then $\mP^k \mid \mu_i$ for $i=1$, $3$.
Recall that ${\mu_i=a_0 X- \theta_i Y}$. Thus $\mP^k$ divides both
\[
(\theta_1-\theta_3) a_0 X\; = \; \theta_1 \mu_3-\theta_3 \mu_1 \quad \text{and} \quad
(\theta_1-\theta_3) Y \; =\; \mu_3-\mu_1.
\]
However $\gcd(X,Y)=\gcd(a_0,Y)=1$, thus $\mP^k \mid (\theta_1-\theta_3)$
as desired.

Next suppose $\nu$ is infinite.
As $(\theta_1-\theta_3)Y=\mu_3-\mu_1$,
and $Y \ne 0$ is a rational integer, we have
\[
\begin{split}
\lVert \theta_1-\theta_3 \rVert_\nu \; & \le \;
\lVert \mu_1-\mu_3 \rVert_\nu \; \\
& = \;
\lvert \mu_1 -\mu_3 \rvert_\nu^{D_\nu}
\;\\
& \le \;
2^{D_\nu} \cdot \max\{ \lvert \mu_1 \rvert_\nu,~\lvert \mu_3 \rvert_\nu\}^{D_\nu} \\
& \le \; 2^{D_\nu} \cdot \max\{ \lVert \mu_1 \rVert_\nu,~\lVert \mu_3 \rVert_\nu\}.
\end{split}
\]
This completes the proof.
\end{proof}

\begin{lem}
Let
\[
c_{16} \; := \; c_{14}+ 2 \log{2}+ 2 h(\theta)+h(\tau) \, .
\]
Then
\begin{equation}\label{eqn:epslogB}
h(\varepsilon) \; \le \; c_{16}+c_{15} \cdot \log{B}\, .
\end{equation}
\end{lem}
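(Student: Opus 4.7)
The plan is to bound $h(\varepsilon)$ by first bounding $h(\mu_1)$ from Lemma~\ref{lem:mulogB}, using Lemma~\ref{lem:quotient} as the bridge, and then use $\varepsilon = \mu_1/\tau_1$ together with Lemma~\ref{lem:heightsum}.

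Starting from Lemma~\ref{lem:quotient}, for each place $\nu \in M_L$, take logarithms to obtain
\[
\log\max\{1,\lVert\mu_1\rVert_\nu\}+\log\max\{1,\lVert\mu_3\rVert_\nu\}\;\le\; 2\log\max\{\lVert\mu_1\rVert_\nu,\lVert\mu_3\rVert_\nu\}-\log\kappa_\nu-2\log\min\{1,\lVert\theta_1-\theta_3\rVert_\nu\}.
\]
The key identity I would use is that for any $\alpha\in L^\times$ and any $\nu$,
\[
\max\{\lVert\alpha\rVert_\nu,\lVert\alpha^{-1}\rVert_\nu\}\;=\;\max\{1,\lVert\alpha\rVert_\nu\}\cdot\max\{1,\lVert\alpha^{-1}\rVert_\nu\},
\]
so summing over all places gives $\sum_\nu\log\max\{\lVert\alpha\rVert_\nu,\lVert\alpha^{-1}\rVert_\nu\}=2D\,h(\alpha)$. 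Applied with $\alpha=\mu_1/\mu_3$, together with the factorisation $\max\{\lVert\mu_1\rVert_\nu,\lVert\mu_3\rVert_\nu\}^2=\lVert\mu_1\mu_3\rVert_\nu\cdot\max\{\lVert\mu_1/\mu_3\rVert_\nu,\lVert\mu_3/\mu_1\rVert_\nu\}$, the product formula kills the $\lVert\mu_1\mu_3\rVert_\nu$ contribution.

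Next, observe that $-\log\min\{1,\lVert\theta_1-\theta_3\rVert_\nu\}=\log\max\{1,\lVert(\theta_1-\theta_3)^{-1}\rVert_\nu\}$, so summing yields $D\,h(\theta_1-\theta_3)$ by Lemma~\ref{lem:heightsum}. Also $-\sum_\nu\log\kappa_\nu=\sum_{\nu\in M_L^\infty}D_\nu\log 2=D\log 2$. Summing the original inequality over $\nu\in M_L$ and dividing by $D$ then gives
\[
h(\mu_1)+h(\mu_3)\;\le\; 2h(\mu_1/\mu_3)+\log 2+2h(\theta_1-\theta_3).
\]
Since $\mu_1$ and $\mu_3$ are Galois conjugates, Lemma~\ref{lem:conjheight} gives $h(\mu_1)=h(\mu_3)$, so
\[
h(\mu_1)\;\le\; h(\mu_1/\mu_3)+\tfrac{1}{2}\log 2+h(\theta_1-\theta_3).
\]
Bounding $h(\theta_1-\theta_3)\le h(\theta_1)+h(\theta_3)+\log 2=2h(\theta)+\log 2$ via Lemmas~\ref{lem:conjheight} and~\ref{lem:heightsum}, and applying Lemma~\ref{lem:mulogB}, I get $h(\mu_1)\le c_{14}+c_{15}\log B+\tfrac{3}{2}\log 2+2h(\theta)$.

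Finally, $\varepsilon=\mu_1/\tau$, so $h(\varepsilon)\le h(\mu_1)+h(\tau)\le c_{16}+c_{15}\log B$ as required. The main technical step is the manipulation in the first paragraph — converting the pointwise inequality from Lemma~\ref{lem:quotient} into a global height inequality via the product formula and the $\max\{\lVert\alpha\rVert_\nu,\lVert\alpha^{-1}\rVert_\nu\}$ identity; the rest is bookkeeping with the constants.
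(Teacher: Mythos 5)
Your proposal is correct and follows essentially the same route as the paper: both hinge on the observation that $\frac{1}{D}\sum_{\nu}\log\max\{\lVert\mu_1\rVert_\nu,\lVert\mu_3\rVert_\nu\}=h(\mu_1/\mu_3)$ (which the paper obtains by adding $\frac{1}{D}\sum_\nu\log\lVert\mu_1\rVert_\nu=0$ to the definition of $h(\mu_3/\mu_1)$, whereas you derive it via the squared-max factorisation and the identity $\max\{\lVert\alpha\rVert_\nu,\lVert\alpha^{-1}\rVert_\nu\}=\max\{1,\lVert\alpha\rVert_\nu\}\max\{1,\lVert\alpha^{-1}\rVert_\nu\}$ — a cosmetic difference). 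Both then feed Lemma~\ref{lem:quotient} in, use $h(\mu_1)=h(\mu_3)$, bound $h(\theta_1-\theta_3)\le 2h(\theta)+\log 2$, and finish with $\varepsilon=\mu/\tau$ and Lemma~\ref{lem:mulogB}, arriving at the same constants.
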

\begin{proof}
First note that
\[
\begin{split}
h(\mu_3/\mu_1) \;
& = \;
\frac{1}{D} \sum_{\nu \in M_L} \log\max\left\{1,~\lVert \mu_3/\mu_1 \rVert_\nu \right\} \\
& = \;
\frac{1}{D} \sum_{\nu \in M_L} \log\max\left\{1,~\lVert \mu_3/\mu_1 \rVert_\nu \right\} + \frac{1}{D} \sum_{\nu \in M_L}\log{\lVert \mu_1 \rVert_\nu} \quad \text{(from
\eqref{eqn:productformula})}\\
& = \;
\frac{1}{D} \sum_{\nu \in M_L} \log\max\{\lVert \mu_1\rVert,~\lVert \mu_3 \rVert_\nu \} \\
& \ge \; \frac{1}{2}(h(\mu_1)+h(\mu_3)) + \frac{1}{D} \sum_{\nu \in M_L}\left(\log{\kappa_\nu} + \log \min\{1,~\lVert(\theta_1-\theta_3) \rVert_\nu\} \right)
\end{split}
\]
by Lemma~\ref{lem:quotient}. However $\mu_1$, $\mu_3$ are conjugates
of $\mu$, thus $h(\mu_1)=h(\mu_3)=h(\mu)$. Moreover,
\[
\frac{1}{D} \sum_{\nu \in M_L}\log{\kappa_\nu}
 \; = \;
- \frac{\log{2}}{D} \sum_{\nu \in M_L^\infty} D_\nu
\; = \; -\log{2} \, .
\]
Thus
\[
\begin{split}
h(\mu) \;  & \le \;
h(\mu_3/\mu_1) \, + \, \log{2} \, - \,
\frac{1}{D} \sum_{\nu \in M_L} \log \min\{1,~\lVert(\theta_1-\theta_3) \rVert_\nu\}\\
& = \;
h(\mu_3/\mu_1) \, + \, \log{2} \, + \,
\frac{1}{D} \sum_{\nu \in M_L} \log \max\{1,~\lVert(\theta_1-\theta_3)^{-1} \rVert_\nu\}\\
& = \;
h(\mu_3/\mu_1) \, + \, \log{2} \, + \,  h((\theta_1-\theta_3)^{-1})\\
& \le \;
h(\mu_3/\mu_1) \, + \, 2 \log{2} \, + \,  2 h(\theta) \, ,
\end{split}
\]
by Lemmas~\ref{lem:conjheight} and \ref{lem:heightsum}.
But $\varepsilon=\tau^{-1} \mu$, thus
\[
h(\varepsilon) \; \le \; h(\mu_3/\mu_1) \, + \, 2 \log{2} \, + \,  2 h(\theta) \, + \, h(\tau) \, .
\]
Applying Lemma~\ref{lem:mulogB} completes the proof.
\end{proof}

It is worthwhile to take stock for a moment. The inequality
\eqref{eqn:epslogB} relates the height of
$\varepsilon=\delta_1^{b_1} \cdots \delta_r^{b_r}$
to $B=\max\{\lvert b_1 \rvert,\dotsc,\lvert b_r \rvert\}$.
The constants $c_7,\dotsc,c_{16}$ are given explicitly
in terms of $\theta$, $\tau$, $\delta_1,\dotsc,\delta_r$
(all belonging to $K$), the prime ideals of $K$ belonging to $S$,
the signature of $K$,
 and the degree $D$, which can
be deduced from the Galois group of the minimal
polynomial of $\theta$. We do not need the field $L$
explicitly.

\begin{lem}\label{lem:htlbB}
Let $U$ be any subset of $S \cup M_K^{\infty}$ of size $r$.
Let $\cM$ be the $r\times r$-matrix
\[
\cM=(\, \log{\lVert \delta_j \rVert_\nu} \, )_{\nu \in U,~1 \le j \le r} \, .
\]
The matrix $\cM$ is invertible. Let $c_{17}$ be the largest
of the absolute values of the entries of $\cM^{-1}$.
Then
\[
B \;  \le \;  2d \cdot c_{17} \cdot h(\varepsilon).
\]
\end{lem}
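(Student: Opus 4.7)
The plan is to interpret $\mathcal{M}$ as the matrix of the (restricted) $S$-unit regulator map in the basis $\delta_1,\dotsc,\delta_r$, and then recover $(b_1,\dotsc,b_r)$ from the vector $(\log \lVert \varepsilon\rVert_\nu)_{\nu \in U}$ by inverting $\mathcal{M}$.

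First I would verify invertibility of $\mathcal{M}$. Note that by the convention of Section (the paragraph labelled \lq\lq Convention on the choice of $S$-unit basis\rq\rq), $r$ equals the $S$-unit rank $\#M_K^\infty + \#S -1$, so $S\cup M_K^\infty$ has cardinality $r+1$ and $U$ is obtained by omitting exactly one place, say $\nu_0$. Consider the map
\[
\Phi \; : \; \OO_S^\times/\langle \zeta\rangle \longrightarrow \R^{S\cup M_K^\infty}, \qquad
\varepsilon \longmapsto (\log\lVert \varepsilon\rVert_\nu)_{\nu \in S\cup M_K^\infty}.
\]
By the generalized Dirichlet $S$-unit theorem, $\Phi$ is injective with image a lattice of full rank $r$ inside the hyperplane $H=\{\,(x_\nu) : \sum_\nu x_\nu =0\,\}$ (the product formula, applied to $S$-units, ensures the image lies in $H$). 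Since $H$ projects isomorphically onto $\R^{U}$ by deletion of the $\nu_0$-coordinate, the composition $\OO_S^\times/\langle \zeta\rangle \to \R^U$ is also an isomorphism onto a lattice of rank $r$. The matrix representing this composition in the basis $\delta_1,\dotsc,\delta_r$ is precisely $\mathcal{M}$, which is therefore invertible.

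Next, since $\varepsilon = \delta_1^{b_1}\cdots\delta_r^{b_r}$, we have
\[
\log \lVert \varepsilon \rVert_\nu \; = \; \sum_{j=1}^{r} b_j \, \log \lVert \delta_j\rVert_\nu
\]
for every $\nu$, and so in matrix form $\mathbf{v}=\mathcal{M}\mathbf{b}$ where $\mathbf{v}=(\log\lVert \varepsilon\rVert_\nu)_{\nu\in U}$ and $\mathbf{b}=(b_1,\dotsc,b_r)^{T}$. Inverting gives $\mathbf{b}=\mathcal{M}^{-1}\mathbf{v}$, and using that $c_{17}$ bounds the absolute values of the entries of $\mathcal{M}^{-1}$ we obtain
\[
B \; = \; \max_j \lvert b_j\rvert \; \le \; c_{17} \sum_{\nu\in U} \lvert \log\lVert \varepsilon\rVert_\nu\rvert.
\]

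Finally, I would bound the sum by $2d\cdot h(\varepsilon)$. For any place $\nu$ of $K$ one has the decomposition
\[
\lvert \log\lVert \varepsilon\rVert_\nu\rvert \; = \; \log\max\{1,\lVert \varepsilon\rVert_\nu\} \; + \; \log\max\{1,\lVert \varepsilon^{-1}\rVert_\nu\}.
\]
Summing over all $\nu\in M_K$ (not just $\nu\in U$, which only enlarges the sum since each term is non-negative) and using Lemma~\ref{lem:heightsum} together with the definition \eqref{eqn:heightdef}, we get
\[
\sum_{\nu\in U} \lvert \log\lVert \varepsilon\rVert_\nu\rvert \; \le \; d \cdot h(\varepsilon) \, + \, d \cdot h(\varepsilon^{-1}) \; = \; 2d\cdot h(\varepsilon).
\]
Combining with the previous bound yields $B\le 2d\cdot c_{17}\cdot h(\varepsilon)$, as required. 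The only nontrivial point is the invertibility of $\mathcal{M}$, which I expect to be the main conceptual step; everything else is a direct application of the product formula and the definition of the absolute logarithmic height.
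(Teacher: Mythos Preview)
Your proof is correct and follows essentially the same approach as the paper: invert $\mathcal{M}$ to express $\bb$ in terms of $(\log\lVert\varepsilon\rVert_\nu)_{\nu\in U}$, bound $\lvert b_j\rvert$ by $c_{17}\sum_{\nu\in U}\lvert\log\lVert\varepsilon\rVert_\nu\rvert$, and then bound the latter by $2d\cdot h(\varepsilon)$ via the identity $\lvert\log\lVert\varepsilon\rVert_\nu\rvert=\log\max\{1,\lVert\varepsilon\rVert_\nu\}+\log\max\{1,\lVert\varepsilon^{-1}\rVert_\nu\}$. The only cosmetic difference is that the paper justifies invertibility by identifying $\det\mathcal{M}$ (up to a factor $\prod_{\nu\in U}D_\nu$) with the $S$-unit regulator of $\delta_1,\dotsc,\delta_r$ (citing \cite{BG}), whereas you argue directly from the $S$-unit theorem via projection off the hyperplane $\sum_\nu x_\nu=0$; these are of course equivalent.
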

\begin{proof}
The determinant of $\cM$ is in fact
\[
\pm \left(\prod_{\nu \in U} D_\nu\right) \cdot R(\delta_1,\dotsc,\delta_r)
\]
where $R(\delta_1,\dotsc,\delta_r)$ is the regulator of system
of $S$-units $\delta_1,\dotsc,\delta_r$,
and therefore does not vanish (c.f. \cite[Section 3]{BG}).
Consider the vectors
$\bb:=\big[b_j\big]_{j=1,\dotsc,r}$ and
$\uu:=\big[\log{\lVert \varepsilon \rVert_\nu} \big]_{\nu \in U}$
in $\R^r$. As $\varepsilon=\delta_1^{b_1} \cdots \delta_r^{b_r}$
we see that $\uu=\cM \bb$ and so $\bb=\cM^{-1} \uu$. It follows,
for $j=1,\dotsc,r$ that
\[
\begin{split}
\lvert b_j \rvert \; %:= \; \max\{ \lvert b_1 \rvert,\dotsc,\lvert b_r \rvert\}
\; & \le \; c_{17} \cdot \sum_{\nu \in U} \lvert \log{\lVert \varepsilon \rVert_\nu}\rvert\\
& \le \; c_{17} \cdot \sum_{\nu \in M_K} \log{\max\{1,\lVert \varepsilon \rVert_\nu\}} + \log{\max\{1,\lVert \varepsilon^{-1} \rVert_\nu\}}\\
& = \; 2d \cdot c_{17} \cdot h(\varepsilon)
\end{split}
\]
as required.
\end{proof}

\noindent\textbf{Remark.} Observe that there are $r+1$ possibilities for the set $U$. To compute $c_{17}$ in practice, we iterate across all such sets and select $c_{17}$ as the smallest possible value across each of the associated $r+1$ matrices $\cM$.

\begin{prop}\label{prop:initialBd}
Let
\[
c_{18} \; := \; 2d \cdot c_{17} \cdot c_{16}\, ,
\qquad
c_{19} \; := \; 2d \cdot c_{17} \cdot c_{15}\, ,
\qquad
c_{20} \; := \; 2c_{18}\, +\,  \max\{2 c_{19} \log{c_{19}},~4e^2\}.
\]
Then
\begin{equation}\label{eqn:Bbound}
B \; \le \;  c_{20}.
\end{equation}
\end{prop}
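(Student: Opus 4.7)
The plan is to combine the two ingredients already prepared: inequality~\eqref{eqn:epslogB}, which gives $h(\varepsilon) \le c_{16}+c_{15}\log B$, and Lemma~\ref{lem:htlbB}, which supplies $B \le 2dc_{17}\cdot h(\varepsilon)$. Substituting the former into the latter, and recognising the definitions $c_{18}=2dc_{17}c_{16}$ and $c_{19}=2dc_{17}c_{15}$, collapses the two estimates at once into the implicit inequality
\[
B \; \le \; c_{18} + c_{19} \log{B}.
\]
Thus the proposition reduces to a purely elementary statement about positive reals: from $B \le c_{18}+c_{19}\log B$ one must deduce $B \le c_{20}$.

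To extract this explicit bound, I would use the standard trick of linearising the logarithm. For any $C>0$, the function $x \mapsto \log x - x/C$ is maximised at $x=C$, so $\log B \le B/C + \log C - 1$ for every $B>0$. Choosing $C=2c_{19}$ and substituting into the implicit inequality cancels $B$ on the two sides with coefficient $1/2$, and a routine rearrangement yields a bound of the shape $B \le 2c_{18}+2c_{19}\log c_{19}$ (a small negative slack of size $2c_{19}(\log 2 -1)$ is discarded, since $\log 2 < 1$). This is the main content of the target estimate.

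The role of the flat term $4e^2$ in the definition of $c_{20}$ is to cover two degenerate regimes. First, the derivation of \eqref{eqn:epslogB} went through Yu's Theorem~\ref{thm:Yu}, which required $B \ge 3$; if $B$ is small enough to violate this, the conclusion $B \le c_{20}$ is already immediate from $c_{20} \ge 4e^2 > 3$. Second, the linearisation trick produces a useful estimate only once $c_{19}$ is large enough that $2c_{19}\log c_{19}$ is positive and dominates the constant terms; for smaller $c_{19}$ the explicit constant $4e^2$ in the maximum takes over. I do not anticipate a serious obstacle: once the implicit inequality $B \le c_{18}+c_{19}\log B$ is in hand, the passage to $B \le c_{20}$ is a routine elementary manipulation, and the only care required is to check that the constants assemble as stated and that neither degenerate regime slips through the safeguard.
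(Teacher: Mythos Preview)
Your proposal is correct and follows the same approach as the paper. Both arguments combine Lemma~\ref{lem:htlbB} with \eqref{eqn:epslogB} to obtain $B \le c_{18}+c_{19}\log B$; the only difference is that the paper then invokes the Peth\H{o}--de~Weger lemma (Lemma~B.1 of Smart's book) as a black box, whereas you reprove the relevant case of that lemma inline via the linearisation $\log B \le B/C + \log C - 1$ with $C=2c_{19}$. Your treatment of the degenerate regimes (the $B\ge 3$ hypothesis of Yu's theorem and the case of small $c_{19}$) is more explicit than the paper's, which simply absorbs them into the cited lemma.
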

\begin{proof}
Combining Lemma~\ref{lem:htlbB} with \eqref{eqn:epslogB} we have
\[
B
\; \le \; 2d \cdot c_{17} \cdot \big( c_{16}+c_{15} \cdot \log{B} \big)
\;  \le \; c_{18}+ c_{19} \log{B}.
\]
%Thus $B \; \le \;  c_{18}+c_{19} \log{B}$.
The Proposition follows from a result of
Peth\H{o} and de Weger (Lemma B.1 of \cite[Appendix B]{Smart}).
\end{proof}

%-------------------------------------------------------------------------------------------%

\subsection{Example~\ref{ex:Ex4} continued} \label{page:taudeltatuple}

We give some further details for Example~\ref{ex:Ex4}. Here $a_0=5$ and
the minimal polynomial for $\theta$ is
\[
x^{11} + x^{10} + 20x^9 + 25 x^8 + 750 x^7 + 625 x^6 + 18750 x^5 + 468750 x^3 +
    7812500 x - 19531250.
  \]
The field $K=\Q(\theta)$ has degree $11$, and signature $(1,5)$.
In this case we have $2$ possibilities
for $(\tau,\delta_1,\dotsc,\delta_r)$; one has $S$-unit
rank $r=9$ and the other has $S$-unit rank $r=10$.
We take a closer look at one of these
$2$ possibilities, with $r=10$.
The set $S$ is composed of the following five primes of ramification
degree $1$ and inertial degree $1$:
\begin{multline}\label{eqn:primesS}
\fp_1=\langle 11, 3+\theta \rangle,
\quad
\fp_2=\langle 7, 1+\theta \rangle,
\quad
\fp_3=\langle 5, \phi \rangle, \\
%\quad
\fp_4=\langle 3, 5+\theta \rangle,
\quad
\fp_5=\langle 2, 1+\theta \rangle,
\end{multline}
where
\begin{multline*}
\phi=
\frac{1}{5^9}(4\theta^{10} + 9\theta^9 + 185\theta^8 +
425\theta^7 + 4625\theta^6
        + 13750\theta^5 + 131250\theta^4\\
	+ 750000\theta^3 + 3203125\theta^2 +
        26953125\theta + 5859375).
\end{multline*}
The bound for $B$ given by Proposition~\ref{prop:initialBd}
is
\[
B \le 1.57 \times 10^{222}.
\]

\section{Controlling the Valuations of $a_0 X - \theta Y$}\label{sec:controlling-vals}

In this section and the next, we will suppose that we have a bound
\begin{equation}\label{eqn:cB0}
B \le \cB_{\infty}
\end{equation}
and we will explain a method for replacing this
bound with what is hopefully a smaller bound. Our
subsequent constants will depend on $\cB_{\infty}$.
Initially we may take $\cB_{\infty} =c_{20}$ by Proposition~\ref{prop:initialBd}.
However, if we succeed in obtaining a smaller bound for $B$,
we may replace $\cB_{\infty}$ by that bound and repeat the process.

We shall replace the reduction step
using linear forms in $\fp$-adic logarithms as in
the paper of Tzanakis and de Weger \cite{TW}. In particular
we will eliminate all computations with
completions of extensions of $K$, as these are extremely tedious
and error-prone.

\bigskip

\subsection{The bounds $\cB_\infty$, $\cB_1$ and $\cB_2$.}
Henceforth $\cB_\infty$, $\cB_1$ and $\cB_2$ will denote
the known bounds for the $\infty$-norm, $1$-norm and $2$-norm
of our exponent vector
$\bb=\big[b_j\big]_{j=1,\dotsc,r}$:
\begin{equation}\label{eqn:cB1}
	B:=\lVert \bb \rVert_\infty \le \cB_\infty, \qquad
\lVert \bb \rVert_1 \le \cB_1 \qquad
\lVert \bb \rVert_2 \le \cB_2.
\end{equation}
Initially,
thanks to Proposition~\ref{prop:initialBd}, we can make the assignments
%\begin{equation}\label{eqn:cB0tocB1}
%\cB_1:=\sqrt{r} \cdot \cB_{\infty}, \edit{\qquad \cB_0 = c_{20} \quad \text{(initial value for $\cB_0$)}}.
%\end{equation}
%Note that
%\begin{equation}\label{eqn:cB1}
%\lVert \bb \rVert \le \cB_1
%\end{equation}
%where $\lVert \bb \rVert$ denotes the $L^2$-norm of $\bb=\big[b_j\big]_{j=1,\dotsc,r}$. \edit{To reiterate, we will update $\cB_{\infty}$ and $\cB_1$ throughout our algorithm, keeping in mind that $\cB_0$ will always denote the bound on the $L^{\infty}$-norm of $\bb$, while $\cB_1$ will always be a bound for the $L^2$-norm of $\bb$.}
\begin{equation}\label{eqn:cB0tocB1}
	\cB_{\infty} = c_{20}, \quad
\cB_2=\sqrt{r} \cdot \cB_{\infty}, \quad \cB_1 = r\cdot \cB_{\infty},
	\quad \text{(initial values for $\cB_\infty$, $\cB_1$, $\cB_2$)}.
\end{equation}
However, as we progress in our algorithm, we will update
the values of $\cB_\infty$, $\cB_1$, $\cB_2$
so that \eqref{eqn:cB1} is still satisfied.

%Note that
%where $\lVert \bb \rVert_2$ denotes the $L^2$-norm of the vector
%while $\lVert \bb \rVert_1$ denotes the $L^1$-norm. To reiterate, we will update $\cB_{\infty}$, $\cB_1$, and $\cB_2$ throughout our algorithm, keeping in mind that these will always denote bounds on the $L^{\infty}$-norm, $L^1$-norm, and $L^2$-norm of $\bb$, respectively. Note that we will focus primarily on $\cB_{\infty}$ and $\cB_2$, but we include $\cB_1$ here to simplify our notation in the remainder of the paper. %Unless specified, we will always refer to the $L^2$-norm when discussing vector lengths, often denoting it simply by $\lVert \cdot \rVert$.

Given a lattice $L \subseteq \Z^r$ and
a vector $\ww \in \Z^r$, we denote by
$D(L,\ww)$ the shortest length of a vector
belonging to the coset $\ww+L$. This value can be
computed using a closest vector algorithm.
Indeed, for $\vv \in \Z^r$, write $\cc(L,\vv)$ for the closest
vector in $L$ to $\vv$ (if there is more than one
at the closest distance, choose any of them).
\begin{lem}\label{lem:closest}
$D(L,\ww)=\lVert \ww+\cc(L,-\ww) \rVert_2$.
\end{lem}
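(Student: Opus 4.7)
The plan is to unwind the two definitions and observe that they describe the same minimisation problem. By definition, $D(L,\ww)$ is the infimum of $\lVert \vv \rVert_2$ over $\vv \in \ww + L$. Every such $\vv$ can be written uniquely as $\vv = \ww + \uu$ with $\uu \in L$, so
\[
D(L,\ww) \; = \; \min_{\uu \in L} \lVert \ww + \uu \rVert_2 \; = \; \min_{\uu \in L} \lVert \uu - (-\ww) \rVert_2.
\]

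The right-hand side is precisely the definition of the distance from the vector $-\ww$ to the lattice $L$, which is attained at the closest lattice vector $\cc(L,-\ww)$. Hence
\[
D(L,\ww) \; = \; \lVert \cc(L,-\ww) - (-\ww) \rVert_2 \; = \; \lVert \ww + \cc(L,-\ww) \rVert_2,
\]
as claimed. There is no real obstacle here; the lemma is simply recording that the coset representative of shortest length corresponds to the closest vector problem on $L$ with target $-\ww$, which is the standard reformulation that allows $D(L,\ww)$ to be computed by a CVP solver.
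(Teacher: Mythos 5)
Your proof is correct and takes essentially the same approach as the paper: both unwind the definitions by rewriting $\lVert \ww + \uu \rVert_2$ as $\lVert \uu - (-\ww) \rVert_2$ and appealing to the defining minimality of $\cc(L,-\ww)$. The paper phrases this as a short argument by contradiction, while you present it as a direct chain of equalities, but the content is identical.
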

\begin{proof}
Let $\bfl \in L$ and suppose
\[
\lVert \ww+\bfl \rVert_2 < \lVert \ww+\cc(L,-\ww) \rVert_2.
\]
Then
\[
\lVert \bfl-(-\ww) \rVert_2 < \lVert \cc(L,-\ww) -(-\ww)\rVert_2.
\]
Thus $\bfl$ is a vector belonging to $L$ that is strictly closer
to $-\ww$ than $\cc(L,-\ww)$ giving a contradiction.
\end{proof}

Our first goal is to use the bounds \eqref{eqn:cB1}
to deduce  bounds on the valuations $\ord_\fp(a_0 X- \theta Y)$
for $\fp \in S$.
\begin{prop}\label{prop:valbd}
Let $\fp \in S$ and let $p$ be the rational prime below $\fp$.
Let $k \ge 1$.
Then there is some $\theta_0 \in \Z$ such that
\[
\theta \equiv \theta_0 \pmod{\fp^k}.
\]
Write
\begin{equation}\label{eqn:facT1}
\fa:=(p \OO_K)/\fp, \qquad
\tau \OO_K =\cT_1/\cT_2,
\end{equation}
where $\cT_1$ and $\cT_2$ are coprime ideals. The following hold:
\begin{enumerate}
\item[(i)] If $\gcd(\fa^k,\theta-\theta_0) \ne \gcd(\fa^k,\cT_1)$
then
\begin{equation}\label{eqn:valub}
\ord_{\fp}(a_0 X-\theta Y) \le k-1.
\end{equation}
\end{enumerate}
Suppose $\gcd(\fa^k,\theta-\theta_0) = \gcd(\fa^k,\cT_1)$.
Let
\[
\fb:=\fa^k/\gcd(\fa^k,\cT_1) \, .
\]
Let
\[
k^\prime \; := \; \max_{\fq \mid \fb}\left\lceil \frac{\ord_{\fq}(\fb)}{e(\fq|p)}\right\rceil \, ;
\]
this satisfies $\fb \cap \Z=p^{k^\prime} \Z$
and therefore $(\Z/p^{k^\prime} \Z)^\times$
naturally injects into $(\OO_K/\fb)^\times$.
%this is the smallest integer
%such that $\fb \mid p^{k^\prime}$.
Given $u \in K^\times$ whose support is coprime with $\fb$, denote
its image in $(\OO_K/\fb)^\times/(\Z/p^{k^\prime} \Z)^\times$
by $\overline{u}$.
Let
\[
\phi: \Z^r \rightarrow (\OO_K/\fb )^\times / (\Z/p^{k^\prime}\Z)^\times,
\qquad (n_1,\dotsc,n_r) \mapsto \overline{\delta_1}^{n_1} \cdots \overline{\delta_r}^{n_r}.
\]
Write
\[
\tau_0:=\frac{(\theta_0-\theta)}{\tau}.
\]
Then the support of $\tau_0$ is coprime with $\fb$.
\begin{enumerate}
\item[(ii)] If $\overline{\tau_0}$ does not belong to
$\Img(\phi)$ then \eqref{eqn:valub} holds.
\item[(iii)] Suppose $\overline{\tau_0}=\phi(\ww)$ for some $\ww \in \Z^r$. Let $L=\Ker(\phi)$ and suppose $D(L,\ww) >\cB_2$.
Then \eqref{eqn:valub} holds.
\end{enumerate}
\end{prop}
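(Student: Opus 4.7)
My plan is to prove (i), (ii), (iii) by contrapositive: assume $\ord_\fp(\mu) \ge k$, where $\mu := a_0X - \theta Y = \tau\varepsilon$ with $\varepsilon := \delta_1^{b_1}\cdots\delta_r^{b_r}$. Three preliminary facts set the stage. First, $\ord_\fq(\mu) = \ord_\fq(\cT_1)$ for every $\fq \mid \fa$: indeed $\varepsilon$ is an $S$-unit, $\cT_2$ is supported on $S$ by Lemma~\ref{lem:tauden}, and condition (b) of Proposition~\ref{prop:Sprop} places every such $\fq$ outside $S$. Second, from $\theta \equiv \theta_0 \pmod{\fp^k}$ and $\ord_\fp(\mu) \ge k$ we get $\fp^k \mid (a_0X-\theta_0 Y)$; as $a_0X-\theta_0 Y \in \Z$ with $e(\fp|p)=1$, this forces $p^k \mid (a_0X-\theta_0 Y)$ and hence $\fa^k \mid (a_0X-\theta_0 Y)\OO_K$. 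Third, $p \nmid Y$: otherwise $\gcd(a_0,Y) = \gcd(X,Y) = 1$ would force $p \nmid a_0 X$, yielding $\ord_\fp(\mu) = 0$, a contradiction.

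For (i), fix $\fq \mid \fa$ and set $E := k \cdot e(\fq|p)$, $a := \ord_\fq(\cT_1)$, $b := \ord_\fq(\theta-\theta_0)$. Since $p \nmid Y$, also $\ord_\fq((\theta-\theta_0)Y) = b$. The decomposition $a_0X-\theta_0 Y = \mu + (\theta-\theta_0)Y$ together with $\ord_\fq(a_0X-\theta_0Y) \ge E$ and the ultrametric inequality forces either $a = b$ or $\min(a, b) \ge E$; in both cases $\min(E, a) = \min(E, b)$, which is the $\fq$-local form of the equality $\gcd(\fa^k, \cT_1) = \gcd(\fa^k, \theta-\theta_0)$. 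Moreover, the same case analysis shows that for $\fq$ in the support of $\fb$ (i.e.\ for which $a < E$), we must have $b = a$, and hence $\ord_\fq(\tau_0) = b - a = 0$. This gives the claim that the support of $\tau_0$ is coprime with $\fb$, and makes $\overline{\tau_0}$ a bona fide element of $(\OO_K/\fb)^\times$.

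For (ii) and (iii), rewrite $\fa^k \mid (a_0X-\theta_0Y)\OO_K$ as $\tau\varepsilon \equiv (\theta_0-\theta)Y \pmod{\fa^k}$. At each $\fq \mid \fb$, divide by a uniformizer to the exact power $a_\fq$: this is legitimate precisely because $\tau\varepsilon$ and $(\theta_0-\theta)Y$ both have $\fq$-valuation $a_\fq$, and because $\tau/\pi^{a_\fq}$ and $Y$ are $\fq$-adic units, yielding the unit-group identity $\varepsilon \equiv Y\tau_0 \pmod{\fb}$ in $(\OO_K/\fb)^\times$. Since $\overline{Y} \in (\Z/p^{k'}\Z)^\times$, reducing modulo this subgroup collapses $\overline{Y}$ and gives $\overline{\tau_0} = \overline{\varepsilon} = \phi(b_1, \ldots, b_r) \in \Img(\phi)$, proving (ii). For (iii), writing $\overline{\tau_0} = \phi(\ww)$ yields $\phi(\bb) = \phi(\ww)$, so $\bb - \ww \in L = \Ker(\phi)$ and $\bb \in \ww + L$; thus $\lVert \bb \rVert_2 \ge D(L, \ww)$, contradicting $\lVert \bb\rVert_2 \le \cB_2 < D(L,\ww)$ from \eqref{eqn:cB1}. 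The main technical obstacle is the careful valuation bookkeeping needed to justify the division step in passing from the congruence modulo $\fa^k$ to the unit-group identity modulo $\fb$; the definition of $\fb$ as $\fa^k/\gcd(\fa^k,\cT_1)$ and of $k'$ as the smallest exponent with $\fb \mid p^{k'}\OO_K$ are precisely engineered to make this transition clean.
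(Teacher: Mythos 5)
Your proof is correct and follows essentially the same route as the paper's: contrapositive assumption $\ord_\fp(\mu)\ge k$, deduce $p\nmid Y$, pass to $p^k\mid(a_0X-\theta_0Y)$ via $e(\fp|p)=f(\fp|p)=1$, reduce the $S$-unit identity modulo $\fa^k$, and then divide by $\tau$ to land in $(\OO_K/\fb)^\times/(\Z/p^{k'}\Z)^\times$. You supply more of the valuation bookkeeping than the paper does — in particular the explicit case analysis $\min(E,a)=\min(E,b)$ at each $\fq\mid\fa$ and the justification for why division by $\tau$ gives a congruence modulo $\fb$ in the unit group — where the paper compresses that transition into a single ``Then'' step. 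The only minor logical point worth flagging: the claim that the support of $\tau_0$ is coprime to $\fb$ is stated in the proposition as an unconditional consequence of the hypothesis $\gcd(\fa^k,\theta-\theta_0)=\gcd(\fa^k,\cT_1)$, whereas you deduce it from the contrapositive assumption $\ord_\fp(\mu)\ge k$; the same $\fq$-by-$\fq$ computation ($a<E$ together with $\min(E,a)=\min(E,b)$ forces $b=a$) works directly from the stated hypothesis, so the conclusion is fine, but the dependence should be attributed to the gcd hypothesis rather than to the running assumption you are trying to contradict.
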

\begin{proof}
Let $\fp$, $p$, and $k$ be as in the statement of the proposition.
We suppose that
\begin{equation}\label{eqn:kbound}
\ord_\fp(a_0X-\theta Y)  \ge k
\end{equation}
and show that this leads to a contradiction under
the hypotheses of any of (i), (ii), (iii).
Recall $k \ge 1$. From the proof of Lemma~\ref{lem:adequate}, we know $p \nmid Y$.
%Suppose $p \mid Y$. Then $\fp \mid Y$ and so $\fp \mid (a_0 X)$.
%As $a_0$, $X$ are rational integers, $p \mid (a_0 X)$.
%This contradicts the restrictions
%$\gcd(X,Y)=\gcd(a_0,Y)=1$ imposed in \eqref{eqn:TM}.

Since $e(\fp|p)=f(\fp|p)=1$, we have $\OO_K/\fp^k \cong \Z/p^k$.
Thus there is some $\theta_0 \in \Z$
such that $\theta-\theta_0 \equiv 0 \pmod{\fp^k}$.
However,
$a_0 X-\theta Y \equiv 0 \pmod{\fp^k}$
and so therefore
$a_0 X- \theta_0 Y \equiv 0 \pmod{\fp^k}$.
However $a_0 X -\theta_0 Y \in \Z$. Thus, recalling that $e(\fp|p)=1$,
we have
$a_0 X-\theta_0 Y \equiv 0 \pmod{p^k}$. From \eqref{eqn:TMdelta}
\[
Y(\theta_0-\theta) \equiv \tau \cdot \delta_1^{b_1} \cdots \delta_r^{b_r}
\pmod{(p \OO_K)^k}.
\]
Note that the prime $\fp$ belongs to the support of the $\delta_i$.
However the other primes $\fp^\prime \mid p$, $\fp^\prime \ne \fp$
do not belong to the support of the $\delta_i$. We now eliminate $\fp$;
as in the statement of the proposition we take
$\fa:=(p\OO_K)/\fp$.
Then
\[
Y(\theta_0-\theta) \equiv \tau \cdot \delta_1^{b_1} \cdots \delta_r^{b_r}
\pmod{\fa^k}.
\]
Observe that $\fa$ is coprime to the support of the
$\delta_i$ and $Y$.
Recall $\tau \OO_K=\cT_1/\cT_2$ where $\cT_1$, $\cT_2$
are coprime integral ideals. By Lemma~\ref{lem:tauden},
the ideal $\cT_2$ is supported on $S$ and therefore
coprime to $\fa$. We therefore have a contradiction
if ${\gcd(\theta_0-\theta,\fa^k) \ne \gcd(\cT_1,\fa^k)}$.
This proves (i). Suppose $\gcd(\theta_0-\theta,\fa^k) = \gcd(\cT_1,\fa^k)$.
Then
\[
Y\cdot \tau_0 \equiv \delta_1^{b_1} \cdots \delta_r^{b_r}
\pmod{\fb},
\]
where $Y$, $\tau_0$, and $\delta_i$ all have support disjoint from $\fb$.
As in the proposition, $k^\prime$ is the smallest positive integer
such that $\fb \mid p^{k^\prime}$, and thus $(\Z/p^{k^\prime} \Z)^\times$
is a subgroup of $(\OO_K/\fb)^\times$ containing the image of $Y$.
Therefore $\overline{Y}=\overline{1}$ and $\phi(\bb)=\overline{\tau_0}$. If
$\overline{\tau_0} \notin \Img(\phi)$, then
we have a contradiction, and so our original assumption~\eqref{eqn:kbound}
is false. This proves (ii).

Suppose now that $\overline{\tau_0} \in \Img(\phi)$ and write
$\overline{\tau_0}=\phi(\ww)$ with $\ww \in \Z^r$.
Then
$\bb \in \ww+L$. Thus
$\lVert \bb\rVert_2 \ge D(L,\ww)$. If $D(L,\ww) > \cB_2$
then $\lVert \bb \rVert_2 > \cB_2$ and we contradict \eqref{eqn:cB1}.
This proves (iii).
\end{proof}

\medskip

\noindent \textbf{Remarks.}
\begin{itemize}
\item $\theta_0$ can be easily computed using Hensel's Lemma.
\item To apply the proposition in practice,
it is necessary to compute the abelian group structure
of $(\OO_K/\fb)^\times$ for ideals $\fb$ of very large norm (but
supported on the primes above $p$). For this we may apply
the algorithms in \cite[Section 4.2]{CohenAdvanced}.
\item $\cc(-\ww,L)$ (and therefore $D(\ww,L)$) can be computed using a closest
vector algorithm such as Fincke and Pohst \cite{FinckePohst}.
\item To effectively apply Proposition~\ref{prop:valbd}
in practice, we need to guess a value of $k$
such that $D(L,\ww)>\cB_2$.
We expect $D(L,\ww)$ to be around $I^{1/r}$
where $I$ is the index $[\Z^r : L]$.
Let us make two simplifying
assumptions: the first is that $\phi$ is surjective,
and the second is that $\gcd(\fa^k,\cT_1)=1$
so that
$\fb=\fa^k$ and $k^\prime=k$. Then
\[
I=\frac{\# (\OO_K/\fa^k)^\times}{\# (\Z/p^k\Z)^\times}
\approx \frac{\Norm(\fa)^k}{{p^k}}=p^{(d-2)k}
\]
where $d$ is the degree of $K$. Thus we should
expect a contradiction if $p^{(d-2)k/r}$ is much bigger than $\cB_2$,
or equivalently
\[k \gg \frac{r \log{\cB_2}}{(d-2) \log{p}}.\]
This heuristic gives a good guide for which values of $k$ to try.
\end{itemize}

%-------------------------------------------------------------------------------------------%

\subsection{Example~\ref{ex:Ex4} continued}\label{page:fpbounds}
We continue giving details for Example~\ref{ex:Ex4},
and in particular for the tuple
$(\tau,\delta_1,\dotsc,\delta_{10})$ alluded to
on page~\pageref{page:taudeltatuple}.
In Section~\ref{sec:sunit}
we noted that $B \le 1.57 \times 10^{222}$. Thus we take
\begin{equation}\label{eqn:cB0bd}
\cB_{\infty}=1.57 \times 10^{222}, \qquad
\cB_2=\sqrt{10} \cdot \cB_{\infty} \approx 4.96 \times 10^{222}.
\end{equation}
We let $\fp=\fp_1=\langle 11, 3+\theta \rangle$
which is a prime above $11$.
The above heuristic suggests that we choose $k$ to
be larger than
\[
\frac{10 \log{\cB_2}}{(11-2) \cdot \log{11}} \approx 237.60\, .
\]
Our program tries $k=238$. It turns out (in the notation
of Proposition~\ref{prop:valbd}) that
$\gcd(\fa^k,\theta-\theta_0) = \gcd(\fa^k,\cT_1)=1$,
thus $\fb=\fa^k$, and moreover $k^\prime=k=238$.
The map $\phi$ is surjective, and thus $L$
does indeed have index
\[
I=\frac{\# (\OO_K/\fa^k)^\times}{\# (\Z/p^k\Z)^\times}
=2^7 \times 3^2 \times 5 \times 11^{2133} \times 61 \times 7321
\approx 5.02 \times 10^{2230}.
\]
We do not give $L$ as its basis vectors are naturally huge.
However, we find that
\[
D(L,\ww) \approx 1.14 \times 10^{223}.
\]
This is much larger than $\cB_2$ and we therefore know from Proposition~\ref{prop:valbd} that
$\ord_\fp(a_0 X-\theta Y) \le k-1=237$.

It is interesting to note that $I^{1/10} \approx 1.18 \times 10^{223}$
which is rather close to $D(L,\ww)$. If instead we take $k=237$,
we find that $I^{1/10} \approx 1.36 \times 10^{222}$ and
${D(L,\ww) \approx 9.55 \times 10^{221}}$ which is somewhat less
than $\cB_2$. We have generally found the above heuristic to
be remarkably accurate in predicting a good choice for $k$.
%We are unable to reduce our bound for $\ord_{\fp}(a_0 X-\theta Y)$ further
%without first reducing our initial bound for $B$.

Now let $\fp_1,\dotsc,\fp_5$ be the primes of $S$
as in \eqref{eqn:primesS}, where $\fp_1=\fp$ as above.
Proposition~\ref{prop:valbd} gives upper bounds
$237$, $292$, $354$, $518$, $821$ for $\ord_{\fp_j}(a_0 X- \theta Y)$
with $j=1,\dotsc,5$ respectively.

\section{Linear Forms in Real Logarithms}\label{sec:LFRL}

In this section, we determine bounds on linear forms in logarithms which we will subsequently use in Section~\ref{sec:red} to successively reduce the large upper bound $\cB_{\infty}$ established in Section~\ref{sec:sunit}.

We let $s:=\#S$ and write
\[
S=\{\fp_1,\dotsc,\fp_s\}.
\]
Using Proposition~\ref{prop:valbd}, we suppose that we have obtained, for $1 \le j \le s$, integers $k_j$
such that
\begin{equation}\label{eqn:kjminus1}
\ord_{\fp_j}(a_0 X- \theta Y) \le k_j-1.
\end{equation}
Recall that
\begin{equation}\label{eqn:varepsilondef}
\delta_1^{b_1} \cdots \delta_r^{b_r} \; = \; \varepsilon
\; = \;
(a_0 X- \theta Y)/\tau \, .
\end{equation}
We write $k_j^\prime:=\ord_{\fp_j}(\tau)$, and
$k_j^{\prime\prime}:=k_j-1-k_j^\prime$. We obtain
\begin{equation}\label{eqn:valbds}
-k_j^\prime \; \le \; \ord_{\fp_j}(\varepsilon) \; \le \; k_j^{\prime\prime} \, .
\end{equation}

\subsection{Updating $\cB_1$ and $\cB_2$}
Recall that $\cB_\infty$, $\cB_1$, $\cB_2$
are respectively
the known bounds for $\lVert \bb \rVert_\infty$,
$\lVert \bb \rVert_1$, $\lVert \bb \rVert_2$
as in \eqref{eqn:cB1}. Initially we take
these as in \eqref{eqn:cB0tocB1}.
In practice, we are often able to update $\cB_1$ and $\cB_2$ with a smaller
bound after each iteration of Proposition~\ref{prop:valbd}.
Let $(u,v)$ be the signature of $K$.
Since $r$ is the rank of the $S$-unit group $\OO_S^\times$, we have
\[
	r=u+v-1+s.
\]
Recall our convention (page~\pageref{page:convention}) on the choice of
$S$-unit basis $\delta_1,\dotsc,\delta_r$: we suppose that
the basis is chosen so that $\delta_1,\dotsc,\delta_{u+v-1}$
is in fact a basis for the unit group modulo torsion.
Thus $\log \lVert \delta_i \rVert_{\nu}=0$
for all $\nu \in M_K^0$ and $1 \le i \le u+v-1$.
 Let $\cM_0$ denote the $s \times s$ matrix
\[\cM_0=(\, \log{\lVert \delta_j \rVert_\nu} \, )_{\nu \in S,~ u+v \le j \le r} \, .\]
In Lemma~\ref{lem:htlbB} let $U=\{\nu_1,\dotsc,\nu_r\}$
where $\nu_1,\dotsc,\nu_{u+v-1}$ are any $u+v-1$ elements of $M_K^\infty$
and the remainder are the elements of $S$. Then, in the notation of
Lemma~\ref{lem:htlbB},
\[
	\cM=
	\left(
	\begin{array}{c|c}
	* & *\\
	\hline
	0 & \cM_0
	\end{array}
	\right).
\]
Since $\cM$ is invertible by Lemma~\ref{lem:htlbB},
it follows that $\cM_0$ is invertible.
We partition our exponent vector $\bb$ as
\[
	\bb=\big[\bb^\prime \vert \bb^{\prime\prime} \big], \qquad
\bb^\prime=\big[b_i\big]_{i=1,\dotsc,u+v-1},
\qquad
\bb^{\prime\prime}=\big[b_i\big]_{i=u+v,\dotsc,r}.
\]
Write
$\uu^{\prime\prime}:=\big[\log{\lVert \varepsilon \rVert_\nu} \big]_{\nu \in S}$ in $\R^s$.
By the above, we have $\uu^{\prime\prime} = \cM_0 \bb^{\prime\prime}$
and thus $\bb^{\prime\prime} = \cM_0^{-1}\uu^{\prime\prime}$. That is, for $1 \leq i \leq s$,
\[b_{u+v-1+i} = m_{i1}\log\lVert \varepsilon \rVert_{\fp_1} + \cdots + m_{is}\log\lVert \varepsilon \rVert_{\fp_s},\]
where $\cM_0^{-1} = \big[m_{ij}\big]$. It follows that
\begin{equation}\label{eqn:finbound}
|b_{u+v-1+i}| \; \leq\;  |m_{i1}|\cdot \lvert\log\lVert \varepsilon \rVert_{\fp_1}\rvert + \cdots + |m_{is}| \cdot \lvert\log\lVert \varepsilon \rVert_{\fp_s}\rvert.
\end{equation}
Applying Proposition~\ref{prop:valbd} to any $\fp_j$ for $1 \leq j \leq s$ and using \eqref{eqn:valbds} and \eqref{eqn:normval}, we obtain
\[
	|\log\lVert \varepsilon \rVert_{\fp_j}| \leq
	\log(\Norm(\fp_j))\cdot\max\{|k_j^{\prime}|,|k_j^{\prime\prime}|\}.
\]
Write
\begin{equation}\label{eqn:rho}
	\rho_i^\prime :=\sum_{j=1}^s
\lvert m_{i,j} \rvert \cdot \log(\Norm(\fp_j))\cdot\max\{|k_j^{\prime}|,|k_j^{\prime\prime}|\}, \qquad \rho_i=\min\{\cB_\infty,\rho_i^\prime\}.
\end{equation}
From equation \eqref{eqn:finbound} it follows that
$\lvert b_{u+v-1+i} \rvert \le \rho_i^\prime$ for $1 \le i \le s$.
However,
$\max\{\lvert b_i \rvert \}_{i=1}^r=\lVert \bb \rVert_\infty \le \cB_\infty$,
so we know that $\lvert b_{u+v-1+i} \rvert \le \cB_\infty$.
We deduce that
\begin{equation}\label{eqn:rhobound}
	\lvert b_{u+v-1+i} \rvert \le \rho_i, \qquad 1 \le i \le s.
\end{equation}
Hence
\[
	\lVert \bb \rVert_1 \; =\; \lVert \bb^\prime\rVert_1+
	\lVert \bb^{\prime\prime}\rVert_1
	 \; \le \;
	(u+v-1) \cB_\infty+\rho_1+\cdots+ \rho_s
\]
and
\[
	\lVert \bb \rVert_2 \; =\;
	\sqrt{\lVert \bb^\prime\rVert_2^2+\lVert \bb^{\prime\prime}
	\rVert_2^2} \;  \le \;
	\sqrt{(u+v-1) \cB_\infty^2+\rho_1^2+\cdots+ \rho_s^2}.
\]
We now update our values for $\cB_1$ and $\cB_2$:
\begin{equation}\label{eqn:cB1new}
	\cB_1=(u+v-1) \cB_\infty+\rho_1+\cdots+ \rho_s,
\end{equation}
\begin{equation}\label{eqn:cB2new}
	\cB_2=\sqrt{(u+v-1) \cB_\infty^2+\rho_1^2+\cdots+ \rho_s^2}.
\end{equation}
Note, since by \eqref{eqn:rho} we have $\rho_i \le \cB_\infty$,
these new values for $\cB_1$ and $\cB_2$ are bounded above
by the old values given in \eqref{eqn:cB0tocB1}.
In practice we usually find that these give
significantly better bounds
for $\lVert \bb \rVert_1$, $\lVert \bb \rVert_2$.

\subsection{Embeddings and Improving the initial bound \eqref{eqn:cB0}}
To improve our initial bound \eqref{eqn:cB0}, we rely on the inequality ${B \le 2 c_{17} \cdot d \cdot h(\varepsilon)}$
furnished by Lemma~\ref{lem:htlbB}. However
$h(\varepsilon)=h(\varepsilon^{-1})$ and so
\[
B \; \le \; 2 c_{17} \sum_{\nu \in M_K} \log{\max\{1,\, \lVert \varepsilon^{-1} \rVert_\nu\}} \, .
\]
Since $\varepsilon$ is an $S$-unit,
for $\nu \notin M_K^\infty \cup S$, we have $\lVert \varepsilon \rVert_\nu=1$.
Thus
\begin{equation}\label{eqn:insum2}
B \; \le \; 2 c_{17} \sum_{\nu \in M_K^\infty \cup S} \log{\max\{1,\, \lVert \varepsilon^{-1} \rVert_\nu\}} \, .
\end{equation}
Therefore, to obtain a better bound for $B$, it is enough to
gain good control on
the contributions to the sum on the right-hand side of \eqref{eqn:insum2}.

\begin{lem}\label{lem:c21}
Let
\[
c_{21} \; = \; \sum_{i=1}^s \max\{0,\, k_i^{\prime\prime}\} \cdot \log{(\Norm(\fp_i))} \, .
\]
Then
\begin{equation}\label{eqn:insum3}
B \; \le \; 2 c_{17}
\left( c_{21} \, +\, \sum_{\nu \in M_K^\infty} \log{\max\{1,\, \lVert \varepsilon^{-1} \rVert_\nu\}} \right) \, .
\end{equation}
\end{lem}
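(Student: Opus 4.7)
The plan is to start from the inequality \eqref{eqn:insum2} and bound the contribution of the finite places $\nu \in S$ to the sum on the right-hand side by $c_{21}$; the contribution from the infinite places will of course be left as is.

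First I would fix $\nu \in S$, and let $\fp_i$ be the corresponding prime ideal. By \eqref{eqn:normval}, we have
\[
	\lVert \varepsilon \rVert_\nu \; = \; \Norm(\fp_i)^{-\ord_{\fp_i}(\varepsilon)},
	\qquad \text{so} \qquad
	\lVert \varepsilon^{-1} \rVert_\nu \; = \; \Norm(\fp_i)^{\ord_{\fp_i}(\varepsilon)}.
\]
Hence
\[
	\log\max\{1, \lVert \varepsilon^{-1}\rVert_\nu\} \; = \; \max\{0,\, \ord_{\fp_i}(\varepsilon)\} \cdot \log(\Norm(\fp_i)).
\]
Now I would invoke the right-hand bound of \eqref{eqn:valbds}, namely $\ord_{\fp_i}(\varepsilon) \le k_i^{\prime\prime}$, which gives
\[
	\log\max\{1, \lVert \varepsilon^{-1}\rVert_\nu\} \; \le \; \max\{0,\, k_i^{\prime\prime}\} \cdot \log(\Norm(\fp_i)).
\]
Summing over all $s$ finite places in $S$ yields exactly $c_{21}$ as defined in the statement. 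Substituting this bound for the $S$-contribution in \eqref{eqn:insum2} produces the claimed inequality \eqref{eqn:insum3}.

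No step here looks like an obstacle: the argument is a direct consequence of splitting the sum in \eqref{eqn:insum2} according to whether $\nu$ is archimedean or lies in $S$, and then applying the place-by-place upper bound provided by \eqref{eqn:valbds} and \eqref{eqn:normval}. The only minor subtlety is the presence of $\max\{1, \cdot\}$ in the height sum, which is what forces the $\max\{0,\, k_i^{\prime\prime}\}$ (rather than $k_i^{\prime\prime}$) in the definition of $c_{21}$, since $\ord_{\fp_i}(\varepsilon)$ may a priori be negative even when $k_i^{\prime\prime}$ is positive, but in that case the corresponding term in the height sum simply contributes $0$.
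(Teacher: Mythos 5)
Your proof is correct and follows exactly the same route as the paper: split the sum in \eqref{eqn:insum2} into archimedean and $S$-places, use \eqref{eqn:normval} to express the $S$-contribution via $\ord_{\fp_i}(\varepsilon)$, and bound it by \eqref{eqn:valbds}. You have just written out explicitly the one-line estimate the paper records.
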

\begin{proof}
From \eqref{eqn:valbds} and \eqref{eqn:normval} we have
\[
\sum_{\nu \in S}
\log\max\{1,\lVert \varepsilon^{-1} \rVert_\nu \}
\; \le \;
c_{21} \, .
\]
The lemma now follows from \eqref{eqn:insum2}.
\end{proof}
%The lemma, together with \eqref{eqn:insum2} gives

%\bigskip

We shall write
\[
M_K^\infty \; = \; M_K^{\R} \, \cup \, M_K^{\C} \, ,
\]
where $M_K^{\R}$ and $M_K^{\C}$ are respectively the sets
of real and complex places.
Recall that $(u,v)$ denotes the signature of $K$. Thus
we have embeddings
\[
\sigma_1,\dotsc,\sigma_{u}, \qquad
\sigma_{u+1},\dotsc,\sigma_{u+v},\overline{\sigma_{u+1}},
\dotsc, \overline{\sigma_{u+v}}
\]
of $K$, where $\sigma_i$ are real embeddings for $1 \le i \le u$,
and $\sigma_{u+i}$, $\overline{\sigma_{u+i}}$ are pairs of complex
conjugate embeddings. Let
\begin{equation}\label{eqn:cEdef}
\cE_K^{\R} \; := \; \{\sigma_1,\dotsc,\sigma_{u}\},
\qquad
\cE_K^{\C} \; := \; \{\sigma_{u+1},\dotsc,\sigma_{u+v}\}.
\end{equation}
For the membership of $\cE_K^{\C}$, we are making an arbitrary
choice of a member from each pair of conjugate complex embeddings,
but that is unimportant.
Note that $M_K^\R$ is in one-to-one correspondence
with $\cE_K^{\R}$ and $M_K^{\C}$ is in one-to-one correspondence
with $\cE_K^{\C}$.
We consider the contribution
to the sum \eqref{eqn:insum3} coming from $\nu \in M_K^{\C}$,
or equivalently from $\sigma \in \cE_K^{\C}$.

Let $\Im(z)$ denote the imaginary part of a complex number $z$.
\begin{lem}\label{lem:cmxbd}
%If $v=0$ (i.e. $\cE_K^{\C}=\emptyset$)
% then let $c_{22} :=0$. Otherwise let
Let
\[
c_{22} \; = \; 2 \sum_{\sigma \in \cE_K^{\C}}
\log \max \left\{1,\, \frac{\lvert \sigma(\tau)\rvert}{
\lvert \Im(\sigma(\theta))\rvert } \right\} \, .
\]
Then
\begin{equation}\label{eqn:insum}
B \; \le \; 2 c_{17}
\left( c_{21} \, +\, c_{22} \, +\,
 \sum_{\sigma \in \cE_K^{\R}} \log{\max\{1,\, \lvert \sigma(\varepsilon) \rvert^{-1}\}} \right) \, .
\end{equation}
\end{lem}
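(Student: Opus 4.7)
The plan is to start from the bound in Lemma~\ref{lem:c21} and split the remaining archimedean sum according to whether a place is real or complex, handling the two cases by different means. For a real place $\nu$ associated to $\sigma\in\cE_K^\R$, equation \eqref{eqn:arch} gives $\lVert\varepsilon^{-1}\rVert_\nu=|\sigma(\varepsilon)|^{-1}$, so these places contribute precisely the last summand of \eqref{eqn:insum}. For a complex place $\nu$ associated to $\sigma\in\cE_K^\C$, \eqref{eqn:arch} gives $\lVert\varepsilon^{-1}\rVert_\nu=|\sigma(\varepsilon)|^{-2}$, hence
\[
\log\max\bigl\{1,\lVert\varepsilon^{-1}\rVert_\nu\bigr\}\;=\;2\log\max\bigl\{1,|\sigma(\varepsilon)|^{-1}\bigr\}.
\]
Thus it suffices to prove the termwise inequality
\[
|\sigma(\varepsilon)|^{-1}\;\le\;\max\!\left\{1,\,\frac{|\sigma(\tau)|}{|\Im(\sigma(\theta))|}\right\}\qquad\text{for each }\sigma\in\cE_K^\C;
\]
summing $\log\max\{1,\cdot\}$ of both sides over $\sigma\in\cE_K^\C$ and multiplying by $2$ then reproduces $c_{22}$.

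The required termwise inequality is where one uses that $\sigma$ is strictly complex. Recall $\varepsilon=(a_0X-\theta Y)/\tau$, so $|\sigma(\varepsilon)|^{-1}=|\sigma(\tau)|/|a_0X-\sigma(\theta)Y|$. Writing $a_0X-\sigma(\theta)Y=(a_0X-\Re(\sigma(\theta))Y)-i\,\Im(\sigma(\theta))\,Y$, the imaginary part alone yields
\[
|a_0X-\sigma(\theta)Y|\;\ge\;|\Im(\sigma(\theta))|\cdot|Y|.
\]
Since $\sigma\in\cE_K^\C$ is not real, $\Im(\sigma(\theta))\ne0$, and since $\gcd(X,Y)=1$ together with the standing assumption $(X,Y)\ne(\pm1,0)$ forces $Y$ to be a nonzero rational integer, we have $|Y|\ge1$. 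Therefore $|a_0X-\sigma(\theta)Y|\ge|\Im(\sigma(\theta))|$, giving $|\sigma(\varepsilon)|^{-1}\le|\sigma(\tau)|/|\Im(\sigma(\theta))|$, as required. Feeding both the real and complex contributions back into \eqref{eqn:insum3} of Lemma~\ref{lem:c21} produces \eqref{eqn:insum}.

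No genuine obstacle is expected: the one substantive point to identify is that a complex archimedean place cannot be controlled by merely invoking that $\varepsilon^{-1}$ is an algebraic integer over a smaller ring (as happens implicitly in the real case), and must instead be bounded by discarding the real part of $a_0X-\sigma(\theta)Y$ and using $|Y|\ge1$. This trade — real part of the linear form for $|\Im(\sigma(\theta))|$ — is precisely what the quantity $c_{22}$ is engineered to record.
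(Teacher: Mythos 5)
Your proof is correct and follows essentially the same route as the paper: it splits the archimedean sum via the local normalizations, and for each complex embedding bounds $|\sigma(\varepsilon)|^{-1}\le|\sigma(\tau)|/|\Im(\sigma(\theta))|$ by using $|a_0X-\sigma(\theta)Y|\ge|\Im(\sigma(\theta))|\cdot|Y|\ge|\Im(\sigma(\theta))|$, just as in the paper's proof of Lemma~\ref{lem:cmxbd}. The concluding remark about ``$\varepsilon^{-1}$ being an algebraic integer over a smaller ring'' is not how the real contribution is actually handled here (those terms simply remain in the inequality and are treated in subsequent lemmas), but that commentary does not affect the validity of the argument.
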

\begin{proof}
Note that \eqref{eqn:insum3} can be rewritten as
\[
B \; \le \; 2 c_{17}
\left( c_{21} \, +\,
 2 \sum_{\sigma \in \cE_K^{\C}} \log{\max\{1,\, \lvert \sigma(\varepsilon) \rvert^{-1}\}}
\,
+\,
 \sum_{\sigma \in \cE_K^{\R}} \log{\max\{1,\, \lvert \sigma(\varepsilon) \rvert^{-1}\}} \right) \, .
\]
Let $\sigma \in \cE_K^{\C}$. Then as ${a_0 X- \theta Y=\tau \cdot \varepsilon}$,
we have
\[
\lvert \sigma(\varepsilon) \rvert \; = \;
\frac{1}{\lvert \sigma(\tau) \rvert} \cdot
\lvert \sigma(a_0X- \theta Y) \rvert \; \ge
\;
\frac{1}{\lvert \sigma(\tau) \rvert} \cdot
\lvert Y \rvert \cdot \lvert \Im(\sigma(\theta)) \rvert
\;
\ge
\;
\frac{
\lvert \Im(\sigma(\theta)) \rvert
}{\lvert \sigma(\tau) \rvert}
\, ,
\]
because of our assumption $\lvert Y \rvert \ne 0$. The lemma
follows.
\end{proof}

The following is immediate.
\begin{prop}\label{prop:totallycomplexbound}
If $K$ is totally imaginary then
\[B \; \le \; {2 c_{17}}({c_{21}+c_{22}})\, .\]
\end{prop}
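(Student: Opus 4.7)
The plan is to observe that this proposition is an immediate corollary of Lemma~\ref{lem:cmxbd}. If $K$ is totally imaginary, then its signature is $(u,v) = (0,v)$ with $d = 2v$, so $K$ has no real embeddings at all. Consequently, by the definition \eqref{eqn:cEdef}, the set $\cE_K^{\R}$ is empty.

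Applying Lemma~\ref{lem:cmxbd}, I would note that the final sum
\[
\sum_{\sigma \in \cE_K^{\R}} \log\max\{1,\, \lvert \sigma(\varepsilon) \rvert^{-1}\}
\]
appearing in \eqref{eqn:insum} is an empty sum, and is therefore equal to $0$. Substituting this into the bound of Lemma~\ref{lem:cmxbd} yields $B \le 2 c_{17}(c_{21} + c_{22})$, which is exactly the claim.

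There is no real obstacle here; the work has already been done in Lemma~\ref{lem:cmxbd}, where the contribution from the complex places was absorbed into $c_{22}$ via the estimate $\lvert \sigma(\varepsilon) \rvert \ge \lvert \Im(\sigma(\theta))\rvert / \lvert \sigma(\tau)\rvert$ (which used $\lvert Y \rvert \ge 1$). The only point to verify is that the separate treatment of real and complex places in Lemma~\ref{lem:cmxbd} is compatible with the degenerate case $u=0$, and it clearly is since an empty sum vanishes. So the proof is essentially a single-line deduction.
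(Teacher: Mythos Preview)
Your proposal is correct and matches the paper's approach exactly: the paper simply states ``The following is immediate'' before the proposition, and your argument spells out precisely why---when $K$ is totally imaginary, $\cE_K^{\R}=\emptyset$, so the real-embedding sum in \eqref{eqn:insum} vanishes.
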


%-------------------------------------------------------------------------------------------%

\subsection{The non-totally complex case}
Suppose now that $K$ has one or more
real embeddings. Recall that the signature
of $K$ is $(u,v)$. Thus $u \ge 1$.
\begin{lem}\label{lem:atmostone}
If $u=1$ we let $c_{23}:=1$. If $u \, \ge\, 2$ we let
\[
c_{23}:=\min\left\{
\frac{\lvert \sigma(\theta)-\sigma^\prime(\theta) \rvert}{\lvert \sigma(\tau) \rvert + \lvert \sigma^\prime(\tau) \rvert}
\; : \; \sigma,~\sigma^\prime \in \cE_K^{\R},\; \sigma \ne \sigma^\prime
\right\} \, .
\]
Then there is at most one $\sigma \in \cE_K^\R$ such that
$\lvert \sigma(\varepsilon) \rvert < c_{23}$.
\end{lem}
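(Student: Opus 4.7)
The plan is to argue by contradiction in the case $u \ge 2$; the case $u=1$ is trivial since $\cE_K^\R$ only has one element to begin with (so any $c_{23}$ works, in particular $c_{23}=1$). For $u\ge 2$, I would suppose there exist two distinct real embeddings $\sigma, \sigma^\prime \in \cE_K^\R$ with both $\lvert \sigma(\varepsilon)\rvert < c_{23}$ and $\lvert \sigma^\prime(\varepsilon)\rvert < c_{23}$, and derive a contradiction with the definition of $c_{23}$.

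The key identity to exploit is the defining equation $a_0 X - \theta Y = \tau\cdot \varepsilon$, which under any embedding $\sigma$ of $K$ becomes
\[
a_0 X - \sigma(\theta)\, Y \; = \; \sigma(\tau)\cdot \sigma(\varepsilon).
\]
So both $\lvert a_0 X - \sigma(\theta) Y\rvert$ and $\lvert a_0 X - \sigma^\prime(\theta) Y\rvert$ would be bounded by $\lvert \sigma(\tau)\rvert\cdot c_{23}$ and $\lvert \sigma^\prime(\tau)\rvert\cdot c_{23}$ respectively. Subtracting, the $a_0 X$ cancels, and the triangle inequality gives
\[
\lvert \sigma^\prime(\theta)-\sigma(\theta)\rvert \cdot \lvert Y\rvert
\; \le \; \bigl(\lvert \sigma(\tau)\rvert+\lvert \sigma^\prime(\tau)\rvert\bigr)\cdot c_{23}.
\]
Since we are in the standing setup where $(X,Y)\ne(\pm 1,0)$ combined with $\gcd(X,Y)=1$ forces $Y\ne 0$, and $Y$ is a rational integer, $\lvert Y\rvert \ge 1$. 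Therefore
\[
\lvert \sigma(\theta)-\sigma^\prime(\theta)\rvert \; < \; \bigl(\lvert \sigma(\tau)\rvert+\lvert \sigma^\prime(\tau)\rvert\bigr)\cdot c_{23},
\]
which contradicts the minimum in the definition of $c_{23}$ taken over all pairs of distinct real embeddings.

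There is essentially no obstacle here; the argument is a straightforward Liouville-style separation of real embeddings. The only subtle points are (i) remembering that $Y\ne 0$ has been arranged (this is exactly the role of the assumption $(X,Y)\ne(\pm 1,0)$ made just before Lemma~\ref{lem:quotient}), and (ii) the fact that $\sigma,\sigma^\prime$ being real lets us use ordinary absolute value on $\R$ for the triangle inequality, which is what makes the definition of $c_{23}$ use $\lvert \sigma(\theta)-\sigma^\prime(\theta)\rvert$ rather than some max/min over conjugate pairs. No further machinery from Sections~\ref{sec:controlling-vals}--\ref{sec:LFRL} is needed.
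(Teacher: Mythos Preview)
Your proposal is correct and follows essentially the same argument as the paper's own proof: assume two distinct real embeddings violate the bound, apply the embedded identity $a_0X-\sigma(\theta)Y=\sigma(\tau)\sigma(\varepsilon)$, subtract, use $\lvert Y\rvert\ge 1$, and contradict the definition of $c_{23}$. The only cosmetic difference is that you explicitly dispose of the case $u=1$, which the paper leaves implicit.
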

\begin{proof}
Suppose otherwise. Then there are
$\sigma$, $\sigma^\prime \in \cE_K^\R$ with $\sigma \ne \sigma^\prime$
such that
$\lvert \sigma(\varepsilon) \rvert < c_{23}$ and $\lvert \sigma^\prime(\varepsilon) \rvert< c_{23}$.
As $a_0 X- \theta Y=\tau \cdot \varepsilon$ we find that
\[
\lvert a_0 X- \sigma(\theta) Y \rvert\; < c_{23} \cdot
\lvert \sigma(\tau) \rvert,\qquad
\lvert a_0 X- \sigma^\prime(\theta) Y \rvert\; < c_{23} \cdot
\lvert \sigma^\prime(\tau) \rvert\, .
\]
Thus
\[
\lvert \sigma(\theta)- \sigma^\prime(\theta) \rvert \cdot \lvert Y \rvert <
c_{23} \cdot ( \lvert \sigma(\tau) \rvert+
\lvert \sigma^\prime(\tau) \rvert).
\]
Recall our assumption that $Y \ne 0$. This inequality now
contradicts our definition of $c_{23}$.
\end{proof}

\begin{lem}\label{lem:eta}
Let
\[
c_{24}\; := \; c_{21}\; + \; c_{22} \; + \; (u \, - \, 1) \log\max\{1, \, c_{23}^{-1}\},
\]
and
\[
c_{25}\; := \; \exp(c_{24}) \, ,
\qquad
c_{26} \; := \frac{1}{2 c_{17}} \, .
\]
Suppose $B > 2 c_{17} \cdot c_{24}$.
Let $\sigma \in \cE_K^\R$ be chosen so that
$\lvert \sigma(\varepsilon) \rvert$ is minimal. Then
\begin{equation}\label{eqn:epseta}
\lvert \sigma(\varepsilon) \rvert \; \le \;
c_{25} \cdot \exp(-c_{26} \cdot B) \, .
\end{equation}
\end{lem}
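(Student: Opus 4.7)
The strategy is to feed Lemma~\ref{lem:cmxbd} into Lemma~\ref{lem:atmostone}. Lemma~\ref{lem:cmxbd} already bounds $B$ in terms of a sum of $\log\max\{1,\lvert \sigma'(\varepsilon)\rvert^{-1}\}$ over real embeddings $\sigma'\in\cE_K^{\R}$, and Lemma~\ref{lem:atmostone} tells us that at most one such $\sigma'$ can make $\lvert \sigma'(\varepsilon)\rvert$ small (namely smaller than $c_{23}$). Hence the minimizing embedding $\sigma$ carries essentially the whole contribution to the sum, and consequently $\lvert \sigma(\varepsilon)\rvert^{-1}$ must grow at least linearly in $B$.

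Concretely, I would first choose $\sigma\in\cE_K^{\R}$ with $\lvert\sigma(\varepsilon)\rvert$ minimal among real embeddings. By Lemma~\ref{lem:atmostone}, each of the remaining $u-1$ embeddings $\sigma'\neq\sigma$ in $\cE_K^{\R}$ must satisfy $\lvert\sigma'(\varepsilon)\rvert\ge c_{23}$, and therefore contributes at most $\log\max\{1,c_{23}^{-1}\}$ to the sum appearing in \eqref{eqn:insum}. Plugging this into Lemma~\ref{lem:cmxbd} and using the definition of $c_{24}$ yields
\[
B \;\le\; 2c_{17}\bigl(c_{24} + \log\max\{1,\lvert\sigma(\varepsilon)\rvert^{-1}\}\bigr).
\]
Rearranging and invoking $c_{26}=1/(2c_{17})$ gives $\log\max\{1,\lvert\sigma(\varepsilon)\rvert^{-1}\} \ge c_{26}B - c_{24}$.

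Under the hypothesis $B > 2c_{17}\cdot c_{24}$, the right-hand side is strictly positive, which forces the inner $\max$ to equal $\lvert\sigma(\varepsilon)\rvert^{-1}$; in particular $\lvert\sigma(\varepsilon)\rvert<1$. Exponentiating then gives $\lvert\sigma(\varepsilon)\rvert\le \exp(c_{24}-c_{26}B) = c_{25}\exp(-c_{26}B)$, which is precisely \eqref{eqn:epseta}. There is no substantive obstacle here: the whole argument is a careful consolidation of the two preceding lemmas, with the $\max\{1,\cdot\}$ handled cleanly by the hypothesis on $B$ that places us in the regime where $\lvert\sigma(\varepsilon)\rvert<1$.
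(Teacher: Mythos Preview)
Your proof is correct and follows essentially the same approach as the paper's own proof: invoke Lemma~\ref{lem:atmostone} to bound the $u-1$ non-minimal real contributions by $(u-1)\log\max\{1,c_{23}^{-1}\}$, substitute into \eqref{eqn:insum} from Lemma~\ref{lem:cmxbd}, rearrange using $c_{26}=1/(2c_{17})$, and use the hypothesis $B>2c_{17}c_{24}$ to strip the $\max$ before exponentiating.
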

\begin{proof}
From Lemma~\ref{lem:atmostone}, we have
\[
\lvert \sigma^\prime(\varepsilon) \rvert \; \ge c_{23}
\]
for all $\sigma^\prime \in \cE_K^\R$ with $\sigma^\prime \ne \sigma$.
From \eqref{eqn:insum} we deduce that
\[
\begin{split}
B \; & \le \;
2 c_{17}
\left( c_{21} \, +\, c_{22} \, +\,
(u \, -\, 1) \log\max\{1, \, c_{23}^{-1}\}
\, + \,
 \log{\max\{1,\, \lvert \sigma(\varepsilon) \rvert^{-1}\}} \right) \\
& = \;
2 c_{17}
\left( c_{24} \, +\,
 \log{\max\{1,\, \lvert \sigma(\varepsilon) \rvert^{-1}\}} \right) \, .
\end{split}
\]
It follows that
\[
\log \max\{ 1,\, \lvert \sigma(\varepsilon)\rvert^{-1}\}
\; \ge \;
\frac{1}{2 c_{17}} B \, - \, c_{24}
\; = \;
c_{26} \cdot B \, - \, c_{24} \, .
\]
The hypothesis $B > 2 c_{17} \cdot c_{24}$ forces the right-hand side
to be positive, and so the left-hand side must simply be
$\log {\lvert \sigma(\varepsilon)^{-1}\rvert}$. After
exponentiating and rearranging, we obtain \eqref{eqn:epseta}.
\end{proof}

%-------------------------------------------------------------------------------------------%

\subsection{Approximate relations}
As in Lemma~\ref{lem:eta} we shall let $\sigma \in \cE_K^{\R}$
be the real embedding that makes
$\lvert \sigma(\varepsilon) \rvert$  minimal. Recall that the signature of $K$ is $(u,v)$; we keep the assumption that $u\geq 1$. Let
\begin{equation}\label{eqn:nrv}
n \; := r+v\, .
\end{equation}
In this section we introduce additional unknown integers
$b_{r+1},\dotsc ,{b_{n}}$, closely related to
the exponents $b_1,\dotsc,b_r$ found in \eqref{eqn:TMdelta}.
We shall use Lemma~\ref{lem:eta} to write down $d-2$
 linear forms in $b_1,\dotsc,b_n$ with real coefficients,
whose values are very small. We shall later give a method,
based on standard ideas originally due to de Weger,
that uses these
\lq approximate relations\rq\
to reduce our bound for $B = \max(\lvert b_1 \rvert,\dotsc,\lvert b_r \rvert,1)$.

%\medskip

We label the elements of $\cE_K^{\R}$ and $\cE_K^{\C}$ as in
\eqref{eqn:cEdef}, where $\sigma_1=\sigma$.
Write
\[
\theta_j=\sigma_j(\theta),
\quad
\tau_j=\sigma_j(\tau),
\quad
\varepsilon_j=\sigma_j(\varepsilon),
\quad
\delta_{i,j}=\sigma_j(\delta_i),
\qquad 1 \le j \le u+v, \; 1 \le i \le r.
\]
Let
$2 \le j \le u+v$ and write
\[
z_j \; := \; \frac{a_0 X- \theta_1 Y}{a_0 X - \theta_j Y} \, .
\]
Observe that
\begin{equation}\label{eqn:prelinform}
\begin{split}
 Y (\theta_1-
\theta_j)  \; &= \;
  (a_0 X - \theta_j Y) \; - \;
(a_0 X- \theta_1 Y)    \\
&= \;  (a_0 X -\theta_j Y) \cdot
 (1-z_j) \\
& = \;
\tau_j \cdot \delta_{1,j}^{b_1} \cdots \delta_{r,j}^{b_r} \cdot (1-z_j).
\end{split}
\end{equation}
In the following lemma, as always, $\log$ denotes the principal
determination of the logarithm (i.e.\ the imaginary part of
$\log$ lies in $(-\pi,\pi]$).
\begin{lem}\label{lem:log1minusz}
Let
%\[
%c_{27} \; := \; \min\{ \, {\lvert \tau \rvert_\nu} \; : \;
%\nu \in M_K^{\edit{\R}} \, \}\, ,\qquad
%c_{28} \; := \; \max\{ \, {\lvert \tau \rvert_\nu} \; : \;
%\nu \in M_K^\infty \, \}\, ,
%\]
%and
%\[
%c_{29} \; := \; \frac{c_{25}\cdot c_{28}}{ c_{23} \cdot c_{27}} \, ,
%\qquad
%c_{30} \; := \;
%\max\{ 2 c_{17} \cdot c_{24} \, , \; \log{(2 c_{29})}/c_{26}\}.
%\]
\[
c_{27} \; := \; \frac{\lvert \tau_1 \rvert \cdot c_{25}}{\min\{ \, {\lvert \tau_i \rvert} \; : \; \sigma_i \in \cE_K^{\R} \, , \sigma_i \neq \sigma \} \cdot c_{23}}\, ,\qquad
c_{28} \; := \: \frac{\lvert \tau_1 \rvert \cdot c_{25}}{\min\{ \, \lvert \Im(\theta_i) \rvert \; : \; \sigma_i \in \cE_K^{\C} \, \}},\]
and
\[c_{29}(j) \; :=
\begin{dcases}
\frac{\lvert \tau_1 \rvert \cdot c_{25}}{\lvert \tau_j \rvert \cdot c_{23}} & \text{ for }\ 2 \leq j \leq u \\ % \sigma_j \in \cE_K^{\R}\\
\frac{\lvert \tau_1 \rvert \cdot c_{25}}{\lvert \Im(\theta_j)\rvert} & \text{ for }\ u+1 \leq j \leq u+v. %\sigma_j \in \cE_K^{\C},
\end{dcases}\]
Define
\[c_{30} \; := \;
\max\{ 2 c_{17} \cdot c_{24} \, , \; \log{(2 c_{27})}/c_{26} , \; \log{(2 c_{28})}/c_{26}\}
\]
and suppose
$B \, > \, c_{30}$.
Then
\[
\lvert \, \log (1- z_j) \rvert \;
\le \;
2 c_{29}(j)
\cdot \exp(-c_{26} \cdot B)
 \,  \qquad \text{for } 2 \le j \le u+v.\]
\end{lem}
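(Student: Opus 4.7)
The plan is to bound $|z_j|$ first, and then pass to $|\log(1-z_j)|$ via the standard estimate $|\log(1-w)|\le 2|w|$ valid for $|w|\le 1/2$.

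First I would write $z_j=\tau_1\varepsilon_1/(\tau_j\varepsilon_j)$, which is immediate from $a_0X-\theta_j Y=\tau_j\varepsilon_j$ (applying $\sigma_j$ to \eqref{eqn:TMdelta}). The numerator is controlled by Lemma~\ref{lem:eta}: since $B>c_{30}\ge 2c_{17}c_{24}$, that lemma gives $|\varepsilon_1|=|\sigma(\varepsilon)|\le c_{25}\exp(-c_{26}B)$. The denominator needs a lower bound on $|\varepsilon_j|$, handled by two separate cases according to whether $\sigma_j$ is real or complex.

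For $2\le j\le u$ with $\sigma_j\in\cE_K^{\R}$, Lemma~\ref{lem:atmostone} applied to $\sigma_j\ne\sigma$ gives $|\varepsilon_j|\ge c_{23}$, so
\[
|z_j|\;\le\;\frac{|\tau_1|\,c_{25}\exp(-c_{26}B)}{|\tau_j|\,c_{23}}\;=\;c_{29}(j)\exp(-c_{26}B).
\]
For $u+1\le j\le u+v$ with $\sigma_j\in\cE_K^{\C}$, I would take imaginary parts of $\tau_j\varepsilon_j=a_0X-\theta_j Y$: since $X,Y\in\Z$ and $|Y|\ge 1$ (by the standing assumption $(X,Y)\ne(\pm 1,0)$ combined with $\gcd(X,Y)=1$),
\[
|\tau_j||\varepsilon_j|\;\ge\;|\Im(\tau_j\varepsilon_j)|\;=\;|Y|\cdot|\Im(\theta_j)|\;\ge\;|\Im(\theta_j)|,
\]
hence $|\varepsilon_j|\ge |\Im(\theta_j)|/|\tau_j|$, leading once again to $|z_j|\le c_{29}(j)\exp(-c_{26}B)$. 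In either case the factor $c_{29}(j)$ is bounded above by $c_{27}$ (real case) or $c_{28}$ (complex case), by the very definitions of $c_{27},c_{28}$ as involving the minima over the relevant embeddings.

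The hypothesis $B>c_{30}$ was cooked up precisely so that $c_{26}B>\log(2c_{27})$ and $c_{26}B>\log(2c_{28})$, which in turn forces $c_{29}(j)\exp(-c_{26}B)\le 1/2$, i.e.\ $|z_j|\le 1/2$. Since $1-z_j$ then stays in the disc $|w-1|\le 1/2$ (well away from the branch cut of $\log$), the principal logarithm satisfies
\[
|\log(1-z_j)|\;=\;\Bigl|\sum_{k\ge 1}\frac{z_j^{\,k}}{k}\Bigr|\;\le\;\frac{|z_j|}{1-|z_j|}\;\le\;2|z_j|,
\]
and substituting the bound on $|z_j|$ yields the claimed inequality $|\log(1-z_j)|\le 2c_{29}(j)\exp(-c_{26}B)$. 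No single step is a real obstacle; the only subtlety is bookkeeping to check that the three quantities in the definition of $c_{30}$ are each needed (the first to activate Lemma~\ref{lem:eta}, the other two to guarantee $|z_j|\le 1/2$ in the real and complex cases respectively).
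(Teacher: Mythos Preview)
Your proof is correct and follows essentially the same route as the paper's own argument: bound the numerator of $z_j$ via Lemma~\ref{lem:eta}, bound the denominator via Lemma~\ref{lem:atmostone} (real case) or the imaginary-part trick (complex case), then use $B>c_{30}$ to ensure $|z_j|\le 1/2$ and apply $|\log(1-z_j)|\le 2|z_j|$. The only cosmetic difference is that the paper states the intermediate inequality $c_{30}\ge\log(2c_{29}(j))/c_{26}$ directly, whereas you route it through $c_{29}(j)\le\max(c_{27},c_{28})$ explicitly.
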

\begin{proof}
Let $2 \le j \le u+v$.
If $\sigma_j \in \cE_K^{\R}$, Lemma~\ref{lem:atmostone} yields
\[
\lvert a_0 X -\theta_j Y \rvert
\; = \; \lvert \tau_j \rvert \cdot \lvert \varepsilon_j \rvert
% \; \ge \; c_{23} \cdot c_{27}\, .
\; \ge \;\lvert \tau_j \rvert \cdot c_{23}\, .
\]
Conversely, if $\sigma_j \in \cE_K^{\C}$, following the proof of Lemma~\ref{lem:cmxbd}, we have
\[
\lvert a_0 X -\theta_j Y \rvert
\; = \; \lvert \tau_j \rvert \cdot \lvert \varepsilon_j \rvert
\; \ge \;\lvert \Im(\theta_j)\rvert \, .
\]
Now, by Lemma~\ref{lem:eta} we have
\[
\lvert a_0 X -\theta_1 Y \rvert \; =\;
\lvert \tau_1 \rvert \cdot \lvert \varepsilon_1 \rvert \; \le \;
% c_{28} \cdot c_{25} \cdot \exp(-c_{26} \cdot B)\, ;
\lvert \tau_1 \rvert \cdot c_{25} \cdot \exp(-c_{26} \cdot B)\, ;
\]
it is in invoking this lemma that we have
made use of the assumption $B> 2 c_{17} \cdot c_{24}$.
Thus
\[\lvert z_j \rvert \; \le \; c_{29}(j) \cdot \exp(-c_{26} \cdot B) \, .\]
Our assumption $B > c_{30} \ge \log{(2 c_{29}(j))}/c_{26}$
gives $\lvert z_j \rvert < 1/2$.
%We conclude (see \cite[page 230]{Smart}) that
From the standard Maclaurin expansion for $\log(1-x)$ we conclude
that
$\lvert  \log (1- z_j) \rvert \, \le \, 2  \cdot \lvert z_j \rvert$,
completing the proof.
\end{proof}

To ease notation, let
\begin{equation}\label{eqn:w}
w\; :=\; u+v-2.
\end{equation}
We now give our first set of $w$ approximate relations.
These only involve our original unknown exponents
$b_1,\dotsc,b_r$ found in \eqref{eqn:TMdelta}.
\begin{lem}\label{lem:linformreal}
Suppose $B>c_{30}$ holds.
Let $1 \le j \le w$.
Let
\[
\beta_j :=
\log \left\lvert
\frac{(\theta_1-\theta_2)\cdot \tau_{j+2}}{(\theta_1-\theta_{j+2})\cdot \tau_2}
\right\rvert \, , \qquad
\alpha_{1,j} := \log \left\lvert \frac{\delta_{1,{j+2}}}{\delta_{1,2}}\right\rvert
 \, ,
\ldots, \,
\alpha_{r,j} := \log \left\lvert \frac{\delta_{r,{j+2}}}{\delta_{r,2}}\right\rvert
\, .
\]
Then
\begin{equation}\label{eqn:linformrl}
\lvert \beta_j+b_1 \alpha_{1,j}+\cdots+b_{r} \alpha_{r,j} \lvert
\; \le \;  2 (c_{29}(2) + c_{29}(j+2)) \cdot \exp(-c_{26} \cdot B).
% 4 c_{29} \cdot \exp(-c_{26} \cdot B).
\end{equation}
\end{lem}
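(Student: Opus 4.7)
The plan is to derive \eqref{eqn:linformrl} by comparing two instances of the identity \eqref{eqn:prelinform} and passing to absolute values before taking logarithms. This is what keeps the argument clean when $\sigma_{j+2}$ is a complex embedding, since it sidesteps any branch-of-logarithm issue.

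First I would apply \eqref{eqn:prelinform} at the indices $k=2$ and $k=j+2$ and form the quotient; the unknown factor $Y$ cancels, producing
\[
\frac{\theta_1 - \theta_2}{\theta_1 - \theta_{j+2}} \; = \; \frac{\tau_2}{\tau_{j+2}} \cdot \prod_{i=1}^{r} \left(\frac{\delta_{i,2}}{\delta_{i,j+2}}\right)^{b_i} \cdot \frac{1 - z_2}{1 - z_{j+2}}.
\]
Rearranging so that the $\delta$ quotients appear with exponents matching those in $\alpha_{i,j}$, taking ordinary absolute values of both sides, and then taking the real logarithm would yield the exact identity
\[
\beta_j \; + \; \sum_{i=1}^{r} b_i\, \alpha_{i,j} \; = \; \log\lvert 1 - z_2\rvert \; - \; \log\lvert 1 - z_{j+2}\rvert.
\]
Taking moduli first is the key step: it collapses any unrecorded argument of a complex $\delta_{i,j+2}$ or of $1-z_{j+2}$ and keeps the relation genuinely real, which is exactly what is needed for the real approximation lattice to be constructed in Section~\ref{sec:red}.

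To bound the right-hand side I would invoke the elementary inequality $\lvert \log\lvert w\rvert\rvert = \lvert \operatorname{Re}(\log w)\rvert \le \lvert \log w\rvert$, valid for the principal branch. Since by hypothesis $B > c_{30}$, Lemma~\ref{lem:log1minusz} applies at both indices $2$ and $j+2$, yielding
\[
\lvert \log\lvert 1 - z_2\rvert\rvert \; \le \; 2 c_{29}(2)\cdot \exp(-c_{26}\cdot B), \qquad \lvert \log\lvert 1 - z_{j+2}\rvert\rvert \; \le \; 2 c_{29}(j+2)\cdot \exp(-c_{26}\cdot B).
\]
The triangle inequality then delivers \eqref{eqn:linformrl}. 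There is no real obstacle in this argument once Lemma~\ref{lem:log1minusz} is established: the only point requiring care, as emphasised above, is the order of operations (absolute values before logarithms) so that the complex-embedding case is handled without having to control arguments of the $\delta_{i,j+2}$ or to verify branch compatibility in a would-be complex logarithmic identity.
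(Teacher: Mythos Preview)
Your proposal is correct and follows essentially the same approach as the paper: form the quotient of the two instances of \eqref{eqn:prelinform}, take absolute values then real logarithms to obtain the exact identity, bound $\lvert\log\lvert 1-z_k\rvert\rvert$ by $\lvert\log(1-z_k)\rvert$, and invoke Lemma~\ref{lem:log1minusz}. Your emphasis on why taking moduli before logarithms avoids branch issues when $\sigma_{j+2}$ is complex is a helpful gloss, but the underlying argument is the paper's.
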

\begin{proof}
From \eqref{eqn:prelinform},
\[
\frac{(\theta_1-\theta_2)\cdot \tau_{j+2}}{(\theta_1-\theta_{j+2})\cdot \tau_{2}}
\cdot
\left(\frac{\delta_{1,j+2}}{\delta_{1,2}}\right)^{b_1}
\cdots
\left(\frac{\delta_{r,j+2}}{\delta_{r,2}}\right)^{b_r}
 \; =\;
\frac{1-z_2}{1-z_{j+2}} \, .
\]
Taking absolute values and then logs
gives
\[
\lvert \beta_j+b_1 \alpha_{1,j}+\cdots+b_{r} \alpha_{r,j} \lvert
\; \le \;  \lvert \log \lvert 1-z_2 \rvert \rvert+
\lvert \log\lvert 1-z_{j+2} \rvert \rvert.
\]
For a complex number $z$, we have
\[
\lvert \log \lvert z \rvert \rvert \; \le \; \lvert \log{z} \rvert
\]
since $\log \lvert z \rvert$ is the real part of $\log{z}$.
 The lemma now follows
from Lemma~\ref{lem:log1minusz}.
\end{proof}
In essence, in the above lemma, we have made use
of the fact that
\begin{equation}\label{eqn:logembed}
K^{\times} \rightarrow \R, \qquad \phi \mapsto \log \lvert \sigma(\phi) \rvert
\end{equation}
is a homomorphism for each embedding $\sigma$ of $K$,
and applied this to the approximate multiplicative
relation \eqref{eqn:prelinform} to obtain an
approximate (additive) relation \eqref{eqn:linformrl}.
If $\sigma$ is complex, then $\sigma$ and its conjugate $\overline{\sigma}$
induce the same homomorphism \eqref{eqn:logembed},
and thus we need only consider
the embeddings $\sigma_1,\dotsc,\sigma_{u+v}$.
Note that although these are $u+v$ embeddings,
we have obtained
only $u+v-2$ approximate relations so far.
That is,
we have had to sacrifice embeddings
because
we wanted to eliminate the two unknowns, $X$ and $Y$.
For $\sigma$ real, $\log \lvert \sigma(\phi) \rvert$
determines $\sigma(\phi)$ up to signs.
However, if $\sigma$ is complex, then \eqref{eqn:logembed}
loses the argument of $\sigma(\phi)$. Thus we should
consider another homomorphism
\begin{equation}\label{eqn:cxlogembed}
K^{\times} \rightarrow \R/\Z \pi, \qquad \phi \mapsto \Im(\log  \sigma(\phi))
\end{equation}
where $\Im(z)$ denotes the imaginary part of a complex number $z$.
Observe that $\Im(\log \sigma(\phi))$ denotes the argument of $\sigma(\phi)$
which naturally lives in $\R/\Z 2\pi$, whilst here we use $\R/\Z\pi$
as the codomain. In
practice, we have found that using $\R/\Z 2\pi$ introduces extra factors but
only results in negligible improvements to the bounds. 
Applying these homomorphisms to
\eqref{eqn:prelinform} allows us to obtain additional approximate relations.
Since there are $v$ complex embeddings,
we obtain an additional $v$ approximate relations. However
since these homomorphism are into $\R/\Z\pi$, the approximate
relations are only valid after shifting by an appropriate multiple
of $\pi$; thus for each complex embedding $\sigma_{u+j}$, we will need
an additional parameter $b_{r+j}$.

Recall that $w = u + v -2$.
\begin{lem}\label{lem:linformimag}
Let $1 \le j \le v$.
Let
\[
\beta_{w+j} :=
\Im\left(\log \left(\frac{\theta_1-\theta_{u+j}}{\tau_{u+j}}\right)\right),
\]
\[
\alpha_{1,w+j} := -\Im(\log \delta_{1,u+j}) \, ,
\ldots, \,
\alpha_{r,w+j} := -\Im(\log \delta_{r,u+j})\, ,
\quad
\alpha_{r+j,w+j} := \pi\, .
\]
Suppose $B \, > \, c_{30}$ holds.
Then there is some $b_{r+j} \in \Z$ such that
\begin{multline}\label{eqn:linformcx}
\lvert \beta_{w+j}+b_1 \alpha_{1,w+j}+\cdots+b_{r} \alpha_{r,w+j}+
b_{r+j} \alpha_{r+j,w+j}\lvert
\\
\; \le \;  2 c_{29}(u+j)
\cdot \exp(-c_{26} \cdot B).
\end{multline}
Moreover,
\begin{equation}\label{eqn:bdp1}
\lvert b_{r+j} \rvert \; \le %\; r \cdot B+2 \, .
\; \lvert b_1 \rvert + \cdots+ \lvert b_r\rvert+\frac{\pi+1}{\pi}\, .
\end{equation}
\end{lem}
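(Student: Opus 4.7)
The plan is to mirror the proof of Lemma~\ref{lem:linformreal}, but in place of the homomorphism $\phi \mapsto \log\lvert\sigma(\phi)\rvert$ I will apply the homomorphism $\phi \mapsto \Im(\log\sigma(\phi))$, which lands in $\R / 2\pi \Z$ rather than $\R$. This is the reason we get only one additional relation per complex place and pay the price of one extra integer unknown $b_{r+j}$.

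Concretely, I start from the identity
\begin{equation*}
Y(\theta_1 - \theta_{u+j}) \;=\; \tau_{u+j}\cdot \delta_{1,u+j}^{b_1}\cdots \delta_{r,u+j}^{b_r}\cdot (1-z_{u+j}),
\end{equation*}
which is just \eqref{eqn:prelinform} specialised to the complex embedding $\sigma_{u+j}$. I then take the principal logarithm of both sides and extract imaginary parts. Since the principal logarithm of a product of nonzero complex numbers agrees with the sum of principal logarithms only modulo $2\pi i$, several integer multiples of $2\pi$ arise; collecting these, using that $Y\in\Z\setminus\{0\}$ contributes $\Im(\log Y)\in\{0,\pi\}$, and recognising that $\beta_{w+j}$ equals $\Im(\log(\theta_1-\theta_{u+j}))-\Im(\log\tau_{u+j})$ modulo $2\pi$, I obtain a clean equality
\begin{equation*}
\beta_{w+j} + \sum_{i=1}^{r} b_i \alpha_{i,w+j} + b_{r+j}\pi \;=\; \Im(\log(1-z_{u+j})),
\end{equation*}
for an integer $b_{r+j}$ absorbing the sign of $Y$ together with all the $2\pi$-shifts. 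Bounding the right hand side via $\lvert\Im(\log(1-z_{u+j}))\rvert \le \lvert\log(1-z_{u+j})\rvert$ and then invoking Lemma~\ref{lem:log1minusz} delivers \eqref{eqn:linformcx} immediately.

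For \eqref{eqn:bdp1}, I rearrange this equality so that $b_{r+j}\pi$ is expressed in terms of the other quantities, and apply the triangle inequality. Each $\alpha_{i,w+j}$ and $\beta_{w+j}$ is the imaginary part of a principal logarithm, hence bounded in absolute value by $\pi$. The error term $\lvert \Im(\log(1-z_{u+j}))\rvert$ is bounded by $2\lvert z_{u+j}\rvert$, and the proof of Lemma~\ref{lem:log1minusz} already guarantees $\lvert z_{u+j}\rvert < 1/2$ under the hypothesis $B>c_{30}$, so this contribution is strictly less than $1$. Combining these bounds gives $\lvert b_{r+j}\rvert \pi \le 1 + \pi + \pi(\lvert b_1\rvert+\cdots+\lvert b_r\rvert)$, which rearranges to the stated bound $\lvert b_{r+j}\rvert\le \lvert b_1\rvert+\cdots+\lvert b_r\rvert+(\pi+1)/\pi$.

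The only subtle point, which I regard as the main obstacle, is the careful bookkeeping of the $2\pi i$ branch-cut ambiguities together with the $\pi i$ coming from a possibly negative $Y$: both contribute integer multiples of $\pi$, and one must verify that they combine into a single integer multiple of $\pi$ (rather than $2\pi$), thereby justifying the coefficient $\alpha_{r+j,w+j}=\pi$ in the lemma statement. Everything else is essentially a direct imitation of Lemma~\ref{lem:linformreal}.
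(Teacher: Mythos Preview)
Your proposal is correct and follows essentially the same route as the paper's proof: both start from \eqref{eqn:prelinform}, take the principal logarithm, extract imaginary parts, absorb the $2\pi$-ambiguities together with the $\{0,\pi\}$ contribution from $\Im(\log Y)$ into the single integer $b_{r+j}$, and then invoke Lemma~\ref{lem:log1minusz}. Your derivation of \eqref{eqn:bdp1} is likewise the same triangle-inequality estimate as in the paper, the only cosmetic difference being that you bound the error term by $2\lvert z_{u+j}\rvert<1$ directly from the proof of Lemma~\ref{lem:log1minusz}, whereas the paper quotes its conclusion $2c_{29}(u+j)\exp(-c_{26}B)<1$; these are the same bound.
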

\begin{proof}
From \eqref{eqn:prelinform},
and Lemma~\ref{lem:log1minusz},
\begin{multline*}
\lvert \log(Y)+
\log((\theta_1-\theta_{u+j})/\tau_{u+j})
-b_1 \log{\delta_{1,u+j}}-\cdots-b_r \log{\delta_{r,u+j}}+b^\prime \cdot \pi i \rvert
\\
\; \le \;
2 c_{29}(u+j)
\cdot \exp(-c_{26} \cdot B),
\end{multline*}
for some $b^\prime \in \Z$.
Thus
\begin{multline*}
\lvert \Im(\log(Y))+ \beta_{w+j}+
b_1 \alpha_{1,w+j}+\cdots+b_r \alpha_{r,w+j}+b^\prime \pi
\rvert
\\
\; \le \;
2 c_{29}(u+j)
\cdot \exp(-c_{26} \cdot B).
\end{multline*}
Recall that $Y \in \Z \setminus \{0\}$,  so
$\Im(\log(Y))$ is either $0$ or $\pi$
depending on whether $Y$ is positive or negative.
We take $b_{r+j}=b^\prime$ in the former case
and $b_{r+j}=b^{\prime}+1$ in the latter case.
This gives \eqref{eqn:linformcx}.

It remains to prove \eqref{eqn:bdp1}.
Our assumption $B>c_{30}$
gives
\[2 c_{29}(u+j) \cdot \exp(-c_{26} \cdot B)<1.\]
Moreover,
$\lvert \beta_{w+j} \rvert \le \pi$ and
$\lvert \alpha_{i,w+j} \rvert \le \pi$ for $0 \le i \le r$.
%Recall that $B=\max(\lvert b_1 \rvert,\dotsc,\lvert b_r \rvert)$.
From \eqref{eqn:linformcx},
%\[
%\begin{split}
%\pi \cdot \lvert b_{r+j} \rvert
%\; & = \; \lvert \alpha_{r+j,w+j}\rvert \cdot \lvert b_{r+j} \rvert \\
%& \le \; \lvert \beta_{w+j} \rvert+\lvert \alpha_{1,w+j} \rvert \cdot \lvert b_1 \rvert
%+ \cdots+ \lvert \alpha_{r,w+j} \rvert \cdot \lvert b_r\rvert+1\\
%& \le \; \pi+rB \cdot \pi+1.
%\end{split}
%\]
\[
\begin{split}
\pi \cdot \lvert b_{r+j} \rvert
\; & = \; \lvert \alpha_{r+j,w+j}\rvert \cdot \lvert b_{r+j} \rvert \\
& \le \; \lvert \beta_{w+j} \rvert+\lvert \alpha_{1,w+j} \rvert \cdot \lvert b_1 \rvert
+ \cdots+ \lvert \alpha_{r,w+j} \rvert \cdot \lvert b_r\rvert+1\\
& \le \; \pi(1 + \lvert b_1 \rvert
+ \cdots+ \lvert b_r\rvert)+1.\\
\end{split}
\]

The lemma follows.
\end{proof}

Summing up, Lemma~\ref{lem:linformreal} and Lemma~\ref{lem:linformimag}
give us $(u+v-2)+v=d-2$ approximate relations \eqref{eqn:linformrl},
\eqref{eqn:linformcx} in integer unknowns $b_1,\dotsc,b_{r+v}$.

%-------------------------------------------------------------------------------------------%
%-------------------------------------------------------------------------------------------%

\section{Reduction of Bounds} \label{sec:red}
We do not know which real embedding $\sigma \in \cE_K^\R$ makes
$\lvert \sigma(\varepsilon) \rvert$ minimal. So the procedure
described below for reducing the bound \eqref{eqn:cB0}
needs
to be repeated for each possible choice of embedding $\sigma$ in $\cE_K^\R$.
Thus, for every possible choice of $\sigma \in \cE_K^\R$, we let
$\sigma_1=\sigma$ and we choose an ordering of the other
embeddings as in \eqref{eqn:cEdef}.
Given a real number $\gamma$, we denote by $[\gamma]$
the nearest integer to $\gamma$, with the convention
that $[k+1/2]=k+1$ for $k \in \Z$.
Let $n$ be as in \eqref{eqn:nrv} and observe that
\[
n=(s+1)+d-2,
\]
where we recall that $s=\#S$.
Let $C$ be a positive integer to be chosen later. Let $\mathbf{I}_{m}$ and $\mathbf{0}_{i,j}$ be the $m \times m$ identity matrix and $i \times j$ zero matrix, respectively. Let $M$ be
the following $n \times n$ matrix
\[
  M:=
  \left[\begin{array}{c:@{}c@{}:@{}c@{}}
          \mathbf{0}_{w,s+1}
          & \begin{array}{ccc}
              [C\alpha_{1,1}] & \cdots & [C\alpha_{1,w}]\\ \relax
              \vdots & & \vdots \\ \relax
              [C\alpha_{w,1}] & \cdots & [C\alpha_{w,w}]
            \end{array}
          & \begin{array}{ccc}
              [C\alpha_{1,w+1}] & \cdots & [C\alpha_{1,d-2}] \\ \relax
              \vdots & &\vdots \\ \relax
              [C\alpha_{w,w+1}] & \cdots & [C\alpha_{w,d-2}]
            \end{array} \Bstrut \\ \hdashline
          \mathbf{I}_{s+1}
          & \begin{array}{ccc}
              [C\alpha_{w+1,1}] & \cdots & [C\alpha_{w+1,w}]\\ \relax
              \vdots & & \vdots \\ \relax
              [C\alpha_{r,1}] & \cdots & [C\alpha_{r,w}]
            \end{array}
          & \begin{array}{ccc}
              [C\alpha_{w+1,w+1}] & \cdots & [C\alpha_{w+1,d-2}] \\ \relax
              \vdots & &\vdots \\ \relax
              [C\alpha_{r,w+1}] & \cdots & [C\alpha_{r,d-2}]
            \end{array} \Bstrut \Tstrut \\ \hdashline
          \mathbf{0}_{v,s+1}
          & \mathbf{0}_{v,w}
          & [C\pi] \cdot \mathbf{I}_{v} \Tstrut
        \end{array}\right] \]
and let $L$ be the sublattice of $\Z^n$ spanned by the rows of $M$.
Recall that $\cB_\infty$, $\cB_1$, $\cB_2$ are respectively the known bounds for $\lVert \bb \rVert_\infty$, $\lVert \bb \rVert_1$, $\lVert \bb \rVert_2$. Let
%\begin{gather*}
%\ww:= (\underbrace{0,0,\dotsc,0}_{s+1},[C \beta_1],\dotsc,[C \beta_{d-2}])
%\in \Z^n, \\
%\cB_2 \; :=\; \frac{ r \cB_{\infty}+1}{2} , \qquad
%\cB_3 \; :=\; \frac{2 r \cB_{\infty}+3}{2} , \qquad
%\cB_4 \; :=\; \frac{2w \cdot \cB_2+v \cdot \cB_3}{4w+v} \, ,\\
%\cB_5 \; :=\; \sqrt{(s+1) \cB_{\infty}^2+ w \cdot \cB_2^2+v \cdot \cB_3^2} \, ,
%\end{gather*}
\begin{gather*}
\ww:= (\underbrace{0,0,\dotsc,0}_{s+1},[C \beta_1],\dotsc,[C \beta_{d-2}])
\in \Z^n, \\
\cA_1 \; :=\; \frac{1+\cB_1}{2}, \qquad \cA_2 \; :=\; \frac{2\pi(1+\cB_1) + 1}{2\pi},\\
\cB_3 \; :=\; \sum_{j=1}^w(c_{29}(2) + c_{29}(j+2))^2 + \sum_{j=1}^vc_{29}(u+j)^2,\\
\cB_4 \; :=\; \cA_1\sum_{j=1}^w(c_{29}(2) + c_{29}(j+2)) + \cA_2\sum_{j=1}^vc_{29}(u+j) , \\
\text{ and } \quad \cB_5 \; :=\; \sqrt{\cB_2^2 - w\cB_{\infty}^2+ w\cA_1^2 + v\cA_2^2} \, .
\end{gather*}
By \eqref{eqn:cB2new}, we observe that
\begin{equation*}
  \cB_2^2=(w+1) \cB_\infty^2+\rho_1^2+\cdots+ \rho_s^2
\end{equation*}
so that $\cB_2^2 - w\cB_{\infty}^2 \geq 0$ and 
thus the argument of the square root in the above definition
of $\cB_5$ is positive.

Write
\[
  \bb_e \, := \,
  (b_1,b_2,\dotsc,b_r,b_{r+1},\dotsc,b_{r+v})
\]
where $b_{r+1},\dotsc,b_{r+v}$ are as in Lemma~\ref{lem:linformimag}.
We think of this as the \lq extended exponent vector\rq.
Note that the number of entries in $\bb_e$ is
\[
  r+v=u+v+s-1+v=d+s-1=n.
\]
If $\bb_e$ is known, then the solution is known.

\begin{lem}\label{lem:never}
\begin{equation}\label{eqn:never}
\lVert \bb_e \rVert_2 \; \le \; \sqrt{
  \cB_2^2 + v \left( \cB_1 + \frac{\pi+1}{\pi} \right)^2
%  r \cB_{\infty}^2 + v \left( r \cB_{\infty} + \frac{\pi+1}{\pi} \right)^2
}.
\end{equation}
\end{lem}
\begin{comment}
\begin{gather*}
  \lVert \bb_e \rVert \; \le \; \sqrt{
    b_1^2+b_2^2+ \cdots+b_r^2+b_{r+1}^2 + \cdots + b_{r+v}^2}\\
  \sqrt{\lVert \bb \rVert^2 +b_{r+1}^2 + \cdots + b_{r+v}^2}\\
  \sqrt{\lVert \bb \rVert^2 +\; v\left(\lvert b_1 \rvert + \cdots+ \lvert b_r\rvert+\frac{\pi+1}{\pi}\right)^2}\\
    \sqrt{\cB_2^2 +\; v\left(\cB_1+\frac{\pi+1}{\pi}\right)^2}\\
  \end{gather*}
\end{comment}
\begin{proof}
This follows immediately from \eqref{eqn:bdp1} and the definitions of $\cB_1,\cB_2$.
\end{proof}

\begin{prop}\label{prop:boundImprove}
  Suppose $\bb_e \cdot M \ne -\ww$.  Let
  \begin{equation}\label{eqn:Ddef}
    \cD \; := \;
    \begin{dcases}
      D(L,\ww) & \text{ if } \ww \notin L \\
      \min_{\substack{\xx\in L \\ \xx \ne \mathbf{0}}} \lVert \xx \rVert_2 & \text{ if } \ww \in L.
    \end{dcases}
  \end{equation}
  Suppose $\cD > \cB_5$. Then
%\begin{equation}\label{eqn:Bfinal}
%B \; \le \;
%\max \left(
%\; c_{30} \quad , \quad
%\frac{1}{c_{26}}
%\cdot
%\log{\left(
%\frac{2 C \cdot c_{29}}{
%\sqrt{\frac{D(L,\ww)^2-\cB_5^2+(4w+v)\cdot \cB_4^2}{4w+v}} \, - \, \cB_4
%}
%\right)} \right) \, .
%\end{equation}
\begin{equation}\label{eqn:Bfinal}
B \; \le \;
\max \left(
\; c_{30} \quad , \quad
\frac{1}{c_{26}}
\cdot
\log{\left(
\frac{2 C \cdot \cB_3}{\sqrt{\cB_3(\cD^2-\cB_5^2)+\cB_4^2}\, - \, \cB_4}
\right)} \right) \, .
\end{equation}
\end{prop}
\begin{proof}
If $B \le c_{30}$ then \eqref{eqn:Bfinal} holds. We will therefore
suppose that $B> c_{30}$. Thus,
inequalities \eqref{eqn:linformrl} and \eqref{eqn:linformcx} hold.
\begin{comment}
  Write
  \[
    \bb_e \, := \,
    (b_1,b_2,\dotsc,b_r,b_{r+1},\dotsc,b_{r+v})
  \]
  where $b_{r+1},\dotsc,b_{r+v}$ are as in Lemma~\ref{lem:linformimag}.
  We think of this as the \lq extended exponent vector\rq.
  Note that the number of entries in $\bb_e$ is
  \[
    r+v=u+v+s-1+v=d+s-1=n.
  \]
\end{comment}
Write
\[
\ww+\bb_e \cdot M  \; = \;
(b_{u+v-1},b_{u+v},\dotsc,b_{r}, \Theta_1, \Theta_2,\dotsc,\Theta_{d-2})\, ,
\]
where we take this equality as the definition
	of $\Theta_1,\dotsc,\Theta_{d-2}$.
That is, for $1 \le j \le w$,
%\[
%\begin{split}
%\lvert \Theta_j \rvert
%\; & \le \; \frac{1}{2}+\frac{\lvert b_1 \rvert}{2}+\cdots
%+ \frac{\lvert b_r \rvert}{2} \, + \,
%C \cdot \lvert \beta_j+b_1\alpha_{1,j}+\cdots + b_r \alpha_{r,j} \rvert\\
%\; & \le  \; \frac{r \cB_{\infty}+1}{2} \, +\,
%4 C \cdot c_{29} \cdot \exp(-c_{26} \cdot B)
%\qquad
%\text{(by \eqref{eqn:cB0} and \eqref{eqn:linformrl})}\\
%\; &\le \; \cB_2 \, + \,
% 2 \eta, \\
%\end{split}
%\]
\[
	\Theta_j \; = \;
	[C \beta_j]+ b_1 [C \alpha_{1,j}]+\cdots+b_r [ C \alpha_{r,j}].
\]
Hence, again for $1 \le j \le w$,
\[
\begin{split}
\lvert \Theta_j \rvert
\; & \le \; \frac{1}{2}+\frac{\lvert b_1 \rvert}{2}+\cdots
+ \frac{\lvert b_r \rvert}{2} \, + \,
C \cdot \lvert \beta_j+b_1\alpha_{1,j}+\cdots + b_r \alpha_{r,j} \rvert\\
\; & \le \frac{1 + \cB_1}{2} \, +
2 C\cdot(c_{29}(2) + c_{29}(j+2)) \cdot \exp(-c_{26} \cdot B) \\
%\qquad
%\text{(by \eqref{eqn:cB0} and \eqref{eqn:linformrl})}\\
\; &\le \; \cA_1 \, + (c_{29}(2) + c_{29}(j+2))\cdot\eta, \\
\end{split}
\]
where the second inequality follows by \eqref{eqn:cB0} and
\eqref{eqn:linformrl}, and
\[
	\eta := 2 C \cdot \exp(-c_{26} \cdot B).
\]

Recall that $w=u+v-2$. For $1 \le j \le v$,
\[
	\Theta_{w+j} \; = \;
	[C \beta_{w+j}]+ b_1 [C \alpha_{1,w+j}]+\cdots+b_r [ C \alpha_{r,w+j}]+
	b_{r+j} [C \pi] .
\]
%\[
%\begin{split}
%\lvert \Theta_{w+j} \rvert
%\; & \le \; \frac{1}{2}+\frac{\lvert b_1 \rvert}{2}+\cdots
%+ \frac{\lvert b_r \rvert}{2}+\frac{\lvert b_{r+j}\rvert}{2} \, + \,
%C \cdot \lvert \beta_j+b_1\alpha_{1,w+j}+\cdots + b_r \alpha_{r,w+j}+b_{r+j}\pi \rvert\\
% & \le  \;
%\frac{r \cB_{\infty}+1}{2} \, +\,
%\frac{r \cB_{\infty}+2}{2} \, +\,
%2 C \cdot c_{29} \cdot \exp(-c_{26} \cdot B)
%\qquad \text{(by \eqref{eqn:cB0}, \eqref{eqn:bdp1} and \eqref{eqn:linformcx})}\\
%&\le \; \frac{2r \cB_{\infty}+3}{2} \, + \,
%2 C \cdot c_{29} \cdot \exp(-c_{26} \cdot B) \\
%&\le \; \cB_3 \, + \, \eta \, . \\
%\end{split}
%\]
Thus, for $1 \le j \le v$,
\begin{align*}
\lvert \Theta_{w+j} \rvert
\; & \le \; \frac{1}{2}+\frac{\lvert b_1 \rvert}{2}+\cdots
+ \frac{\lvert b_r \rvert}{2}+\frac{\lvert b_{r+j}\rvert}{2} \\
& \qquad \qquad+ C \cdot \lvert \beta_{w+j}+b_1\alpha_{1,w+j}+\cdots + b_r \alpha_{r,w+j}+b_{r+j}\pi \rvert\\
\; & \le \; \frac{2\pi + 1}{2\pi}+ \cB_1+ \,
2C \cdot c_{29}(u+j)\cdot \exp(-c_{26} \cdot B)\\
%\qquad \text{(by \eqref{eqn:cB0}, \eqref{eqn:bdp1} and \eqref{eqn:linformcx})}\\
&\le \; \cA_2 + c_{29}(u+j)\cdot \eta,
\end{align*}
where the second inequality follows from \eqref{eqn:cB0}, \eqref{eqn:linformcx}, and \eqref{eqn:bdp1}.

By assumption, $\ww+ \bb_e \cdot M \neq \mathbf{0}$, hence
%\begin{equation}\label{eqn:DLT}
%\begin{split}
%D(L,\ww)^2 \; & \le \; \lVert \ww+ \bb M \rVert^2 \\
%& = \; \edit{b_{u+v-1}^2+\cdots +b_{u+v}^2}+
%\Theta_1^2+\cdots+\Theta_{d-2}^2\\
%& \le \; (s+1) \cB_{\infty}^2+
%w\cdot \left(
% \cB_2 \, + \,
% 2 \eta
%\right)^2 + v \cdot
%\left(
%\cB_3 \, + \, \eta
%\right)^2\\
%& = \; \cB_5^2+(4w\cB_2+2v \cB_3)\eta+(4w+v)\eta^2 \\
%& = \; \cB_5^2+2(4w+v)\cB_4 \cdot \eta+(4w+v)\eta^2 \\
%& = \;
%(4w+v) \cdot \left(\eta \, + \, \cB_4 \right)^2
%\,+ \,
%\cB_5^2 - (4w+v) \cB_4^2 \, .
%\end{split}
%\end{equation}
\begin{equation*}%\label{eqn:DLT}
\begin{split}
\cD^2 \; & \le \; \lVert \ww+ \bb_e \cdot M \rVert_2^2 \\
& = \; b_{u+v-1}^2+\cdots +b_{r}^2+
\Theta_1^2+\cdots+\Theta_{d-2}^2\\
	& \le \;
	b_{u+v-1}^2+\cdots +b_{r}^2+w\cA_1^2+v\cA_2^2+
	2\cB_4\eta+\cB_3\eta^2.
\end{split}
\end{equation*}
However, $\lvert b_{u+v-1} \rvert \le \lVert \bb \rVert_\infty \le \cB_\infty$.
Moreover, by \eqref{eqn:rhobound} we have
$\lvert b_{u+v-1+i} \rvert \le \rho_i$ for $i=1,\dotsc,s$,
where $\rho_i$ is given in \eqref{eqn:rho}.
It follows that
\begin{equation*}%\label{eqn:DLT}
\begin{split}
	\cD^2 \; & \le \;
	\cB_\infty^2+\rho_1^2+\cdots+\rho_s^2+w\cA_1^2+v\cA_2^2+
        2\cB_4\eta+\cB_3\eta^2\\
	&=\; \cB_2^2-w \cB_\infty^2 +w\cA_1^2+v\cA_2^2+
	2\cB_4\eta+\cB_3\eta^2 \qquad \text{(from \eqref{eqn:cB2new}})\\
& = \; \cB_5^2+2\cB_4\eta+\cB_3\eta^2 \\
& = \; \cB_5^2+ \cB_3\left(\eta + \frac{\cB_4}{\cB_3}\right)^2 - \frac{\cB_4^2}{\cB_3}.
\end{split}
\end{equation*}
Recall our assumption that $\cB_5 \, < \, \cD$.
Thus
%\[
%(4w+v)\cdot \cB_4^2 \; < \; D(L,\ww)^2 - \cB_5^2 + (4w+v) \cdot \cB_4^2 \; \le \; (4w+v) \cdot (\eta+\cB_4)^2.
%\]
\[\frac{\cB_4^2}{\cB_3} \; < \; \cD^2 - \cB_5^2 + \frac{\cB_4^2}{\cB_3} \; \le \; \cB_3 \left(\eta + \frac{\cB_4}{\cB_3}\right)^2,\]
and so
%\[
%0 \; < \; \sqrt{\frac{D(L,\ww)^2-\cB_5^2+(4w+v)\cdot \cB_4^2}{4w+v}} \, - \, \cB_4 \;
%\le \; \eta \, .
%\]
\[0 \; < \; \frac{\sqrt{\cB_3(\cD^2-\cB_5^2)+\cB_4^2}\, - \, \cB_4}{\cB_3} \;
\le \; \eta.\]
However $\eta = 2 C \cdot \exp(-c_{26} \cdot B)$.
This yields the bound
%\[
%B \; \le \;
%\frac{1}{c_{26}}
%\cdot
%\log{\left(
%\frac{2 C \cdot c_{29}}{
%\sqrt{\frac{D(L,\ww)^2-\cB_5^2+(4w+v)\cdot \cB_4^2}{4w+v}} \, - \, \cB_4
%}
%\right)}
%\]
\[
B \; \le \;
\frac{1}{c_{26}}
\cdot
\log{\left(
\frac{2 C \cdot \cB_3}{\sqrt{\cB_3(\cD^2-\cB_5^2)+\cB_4^2}\, - \, \cB_4}
\right)},
\]
which gives \eqref{eqn:Bfinal}.
\end{proof}

\noindent \textbf{Heuristic.}
It remains to decide on a reasonable choice
for $C$. We expect that the determinant of the matrix $M$ is approximately $C^{d-2}$. Thus the distance between adjacent vectors in $L$
is expected to be in the region of $C^{(d-2)/n}$, and so we anticipate
(very roughly) that $\cD \sim C^{(d-2)/n}$. We would like
$\cD> \cB_5$. Therefore it is reasonable to choose $C \gg \cB_5^{n/(d-2)}$.
If, for a particular choice of $C$, the condition $\cD>\cB_5$ fails,
then we simply try again with a larger choice of $C$.\\

%\bigskip

\noindent \textbf{Remarks.} Our approach is somewhat unusual
in that it uses all $d-2$ available approximate relations
to reduce the initial bound. In contrast, it is much more
common to use one relation (e.g.\ \cite[Section 16]{TW})
to reduce the bound. In most examples, we have found
that both approaches give similar reductions in the size
of the bound and that there is no advantage in using one
over the other. However in some examples the approach
of using only one relation fails spectacularly. Here
are two such scenarios. % where it does fail.
\begin{enumerate}
\item[(i)] Suppose $\delta_1$ (say) belongs to a
proper subfield $K^\prime$ of $K$. Now let
$\sigma_2$, $\sigma_3$ be distinct embeddings of $K$
that agree on $K^\prime$. Then, in the notation
of Lemma~\ref{lem:linformreal} we find
 $\alpha_{1,3}=\log\lvert \sigma_3(\delta_1)/\sigma_2(\delta_1) \rvert=0$, and so the coefficient of the unknown $b_1$ is zero
in the approximate relation \eqref{eqn:linformrl}. Therefore
the one relation \eqref{eqn:linformrl} on its own fails to provide
any information on the size of $b_1$. In practice,
the lattice constructed in \cite[Section 16]{TW}
from this one relation will contain the tiny
vector $(1,0,\dotsc,0)$, and this will result in
the computational failure of the closest vector algorithm.
\item[(ii)] We continue to suppose that
 $\delta_1$ belongs to the proper subfield $K^\prime$
as above. Let $\sigma_{u+1}$ be a complex embedding of $K$
that extends a real embedding of $K^\prime$, and suppose for
simplicity that $\sigma_{u+1}(\delta_1)$ is positive. Then
again, $\alpha_{1,w+1}=0$ in the approximate relation
\eqref{eqn:linformcx}, and so that relation on its own
fails to control $b_1$.
\end{enumerate}
In the above two examples, we chose to illustrate the possible
failure of the approach of using one relation by imposing
$\delta_1 \in K^\prime$ where $K^\prime$ is a subfield of $K$.
However, a similar failure occurs (and is more difficult to
find) if the $S$-unit basis $\delta_1,\dotsc,\delta_r$
is multiplicatively dependent over $K^\prime$, meaning that
there is some non-trivial $(c_1,\dotsc,c_r) \in \Z^r$
such that $\delta_1^{c_1} \cdots \delta_r^{c_r} \in K^\prime$.

\medskip

In Proposition~\ref{prop:boundImprove}, we require $\bb_e \cdot M \ne -\ww$.
Of course, if $M$ is non-singular, we can simply check whether
$\bb_e=-\ww \cdot M^{-1}$ yields a solution, and therefore there
is no harm in making this assumption. In all our examples, $M$
has been non-singular, and we expect that by choosing $C$
large enough we can ensure that this happens. However,
if $M$ is singular then the equation $\bb_e \cdot M=-\ww$
either has no solutions or the solutions belong to
the translate of a sublattice of $\Z^n$ whose rank is
the co-rank of the matrix $M$. A glance at the matrix $M$
reveals that this co-rank is at most $w=u+v-2$. If this
case was to ever arise in practice, we would need
to enumerate all vectors $\bb_e$ satisfying
$\bb_e \cdot M = -\ww$ and the bound in Lemma~\ref{lem:never}
and test if they lead to solutions.

%-------------------------------------------------------------------------------------------%

\subsection{Example~\ref{ex:Ex4} continued}
We continue giving details for
the tuple
$(\tau,\delta_1,\dotsc,\delta_{10})$
alluded to on page~\pageref{page:taudeltatuple}.
The values of $\cB_{\infty}$ and $\cB_2$ are given
in \eqref{eqn:cB0bd}. The set $S$
consists of five primes
$\fp_1,\dotsc,\fp_5$
given by \eqref{eqn:primesS}.
By applying Proposition~\ref{prop:valbd} we
had
(page~\pageref{page:fpbounds})
obtained bounds
$237$, $292$, $354$, $518$, $821$ for $\ord_{\fp_j}(a_0 X- \theta Y)$
with $i=1,\dotsc,5$ respectively; these are the values
denoted $k_j-1$ in \eqref{eqn:kjminus1}.
Now $\ord_{\fp_j}(\tau)=0,0,1,0,0$ respectively for
$j=1,\dotsc,5$. Letting $\varepsilon$ be as in \eqref{eqn:varepsilondef},
we may take $(-k_j^\prime,k_j^{\prime\prime})$ in \eqref{eqn:valbds}
to be $(0,237)$, $(0,292)$, $(-1,353)$, $(0,518)$, $(0,821)$
respectively.
This allows us to compute the constant $c_{21}$ defined in Lemma~\ref{lem:c21}.
We find that $c_{21} \approx 2842.79$.
The field $K$ has signature $(u,v)=(1,5)$
and thus there is only one possibility for $\sigma \in \cE_K^{\R}$.
We therefore take $\sigma_1=\sigma$ to be the unique real embedding.
For illustration, we give the values of constants appearing
in Section~\ref{sec:LFRL}:
\begin{gather*}
c_{22}\approx 30.31, \quad
c_{23}=1, \quad
c_{24} \approx 2873.10, \quad
c_{25} \approx 5.91 \times 10^{1247}, \quad
c_{26} \approx 0.35, \\
c_{27} \approx 5.91 \times 10^{1247}, \quad
c_{28} \approx 1.40 \times 10^{1246}, \quad
c_{29}(2) \approx 1.25 \times 10^{1246}, \\
c_{29}(3) \approx 8.30 \times 10^{1245}, \quad
c_{29}(4) \approx 7.83 \times 10^{1245}, \quad
c_{29}(5) \approx 9.21 \times 10^{1245}, \\
c_{29}(6) \approx 1.40 \times 10^{1246}, \quad
c_{30} \approx 8290.02.
\end{gather*}
Lemma~\ref{lem:linformreal} gives $w=u+v-2=4$ approximate relations
and Lemma~\ref{lem:linformimag} gives another $v=5$ relations.
Therefore we have $d-2=9$ relations altogether,
and $n=\#S+d-2=15$. Therefore the matrix $M$ is $15 \times 15$
and the lattice $L$ belongs to $\Z^{15}$.
We find that
\begin{gather*}
\cB_1\approx 7.85 \times 10^{222},\quad
\cA_1 \approx 3.92 \times 10^{222}, \quad
\cA_2 \approx 7.85 \times 10^{222},\quad
\cB_3 \approx 2.59 \times 10^{2493},\\
\cB_4 \approx 7.57 \times 10^{1469},\quad \text{ and } \quad
\cB_5 \approx 1.93 \times 10^{223}.
\end{gather*}
In accordance with the above heuristic, our program chooses
\[
C=\left[\cB_5^{n/(d-2)} \right] \approx 1.39 \times 10^{372}.
\]
The matrix $M$ and the lattice $L$ are too huge to reproduce here,
but we point out that
\[
[\Z^{15} : L] \approx 2.66 \times 10^{3357}; %6.48 \times 10^{3360},
\qquad \cD \approx 7.23 \times 10^{223}. % 7.763 \times 10^{225}.
\]
In this case, $\ww \notin L$ so that $\cD$ is computed using $D(L,\ww)$.
Hence the hypothesis $\cD> \cB_5$ of Proposition~\ref{prop:boundImprove}
is satisfied. We may therefore apply Proposition~\ref{prop:boundImprove}
to obtain a new bound for $B$ given by \eqref{eqn:Bfinal}:
\[
B \le 9270.82. %9346.61.
\]
We now start again with $\cB_{\infty}=9270$ and repeat the previous
steps, first for obtaining bounds for $\ord_{\fp_j}(a_0 X- \theta Y)$
and then for writing down the lattice $L$ and applying
Proposition~\ref{prop:boundImprove}. The following table
illustrates the results.

\begin{table}[h]
{\renewcommand{\arraystretch}{1.1} %<- modify value to suit your needs
\begin{tabular}{|c||c||c|c|c|c|c|}
\hline
Iteration & $\cB_{\infty}$ &
\multicolumn{5}{c|}{bounds for $\ord_{\fp_j}(a_0 X- \theta Y)$}\\
& & \multicolumn{5}{c|}{with $1 \le j \le 5$}\\
\hline\hline
0 & $1.57 \times 10^{222}$ &
237 & 292 & 354 & 518 & 821\\
\hline
1 & 9270 & 4 & 5 & 8 & 10 & 15\\
\hline
2 & 251 & 3 & 3 & 5 & 6 & 10 \\
\hline
3 & 190 & 2 & 3 & 5 & 6 & 9 \\
\hline
4 & 180 & 2 & 3 & 5 & 6 & 9 \\
\hline
5 & 180 & 2 & 3 & 5 & 6 & 9 \\
\hline\hline
\end{tabular}}
\vspace{0.5em}
\caption{We successively reduce the bounds for
$B$ and for $\ord_{{\fp_j}}(a_0 X- \theta Y)$,
where $j=1,\dotsc,5$.}\label{table:bounds}
\end{table}

Note that at the fifth iteration we fail to obtain any improvement
on the bounds, and so we stop there. Recall that $r=10$
and that $B=\max(\lvert b_1\rvert,\dotsc,\lvert b_{10}\rvert)$, where
$b_1,\dotsc,b_{10}$ are the exponents in \eqref{eqn:TMdelta}.
Our final bound is $B \le 180$. The set of possible
integer tuples $(b_1,\dotsc,b_{10})$ satisfying this bound
has size
\[
(2 \times 180+1)^{10} \; =\; 362^{10} \approx 3.86 \times 10^{25}.
\]
The huge size of this region does not allow
brute force enumeration of the solutions. Instead, one can reduce
the number of tuples to consider by using the bounds
we have obtained for $\ord_{\fp_j}(a_0 X-\theta Y)$.
We let $\kappa_j= 2$, $3$, $5$, $6$, $9$ for $j=1,\dotsc,5$,
respectively. We know that $0 \le \ord_{\fp_j}(a_0 X-\theta Y) \le \kappa_j$,
and so there are $\kappa_j+1$ possibilities for the
$\ord_{\fp_j}(a_0 X-\theta Y)$. Let $(k_1,\dotsc,k_5)$ be some tuple of integers
satisfying $0 \le k_j \le \kappa_j$. The condition
$\ord_{\fp_j}(a_0 X-\theta Y)=k_j$
simply defines a hyperplane of codimension $1$
in the space of possible $(b_1,\dotsc,b_{10})$.
Imposing all five conditions
$\ord_{\fp_j}(a_0 X-\theta Y)=k_j$
with $j=1,\dotsc,5$ cuts down the dimension from $10$ to $5$.
Thus we expect that the search region should (very roughly) have size
\[
(\kappa_1+1) \cdots (\kappa_{5}+1) \cdot 362^{5}
\approx 3.13 \times 10^{16}.
\]
This is still way beyond brute force enumeration
and motivates our next section.

\section{Sieving}\label{sec:sieve}

In order to resolve the Thue--Mahler equation
\begin{equation*}
  F(X,Y)=a \cdot
  p_1^{z_1} \cdots p_v^{z_v}, \qquad
  X,~Y \in \Z, \;
  \gcd(X,Y)=\gcd(a_0,Y)=1,
\end{equation*}
we have first reduced the problem to that of resolving a number of equations of the form \eqref{eqn:TMdelta}, subject to the restrictions \eqref{eqn:restrictions}, \eqref{eqn:restrictions2}.
%  We have explained how to reduce our original
%Thue--Mahler equation \eqref{eqn:TM} to equations
%of the form \eqref{eqn:TMdelta}
%subject to the restrictions \eqref{eqn:restrictions}.
Recall that $B=\max\{\lvert b_1 \rvert, \dotsc,\lvert b_r \rvert\}$,
where $\bb=(b_1,\dotsc,b_r) \in \Z^r$ denotes the
vector of unknown exponents to solve for.
%$B:=\max(\lvert b_1 \rvert, \dotsc,\lvert b_r \rvert)$.
For each such equation \eqref{eqn:TMdelta}, we have used the theory of linear forms in logarithms %We have explained how to use the theory of linear
% forms in logarithms
to obtain a bound for $B$, and moreover, we have explained
how to repeatedly reduce this bound. % on $B$.
During each of these iterations, we have simultaneously reduced the bounds on
the $\infty$-norm, the $1$-norm, and the $2$-norm of $\bb$.  Let us
denote the final bound for the $\infty$-norm of $\bb$ by $\cB_f^{\prime}$ and
write $\cB_f$ for the final bound on the $2$-norm of $\bb$.
%Write \edit{$\cB_{f,1}:=\sqrt{r} \cdot \cB_f^\prime$}.
Thus
\begin{equation}\label{eqn:cBf}
\lVert \bb \rVert_2 \le \cB_{f}, \qquad \lVert \bb \rVert_\infty \le
\cB_f^\prime.
\end{equation}
We have also explained how to obtain and reduce the bounds on
$\ord_\fp(a_0 X- \theta Y)$
for $\fp \in S$. Suppose that at the end of this process,
our bounds are
\begin{equation}\label{eqn:finalvalbd}
0 \le \; \ord_\fp(a_0 X- \theta Y) \; \le \kappa_\fp, \quad \text{for $\fp \in S$}.
\end{equation}
Unfortunately, in our high rank examples (i.e. when the
$S$-unit rank $r$ is large) the final bound $\cB_{f}^{\prime}$
is often
too large to allow for brute force enumeration of solutions. Instead, we shall sieve for solutions using both the primes $\fp$ in $S$,
and also rational primes $p$ whose support in $\OO_K$ is disjoint from $S$.
The objective of the sieve is to show that the solutions $\bb$
belong to a union of a certain (hopefully small) number of cosets $\ww+L$,
where the $L$ are sublattices of $\Z^r$. As the sieve progresses,
the determinants of the lattices $L$ will grow. The larger the
determinant, the fewer vectors we expect belonging to
$\ww+L$ and satisfying $\lVert \bb \rVert_2 \le \cB_{f}$,
and the easier it should be to find these vectors using
the algorithm of Fincke and Pohst \cite{FinckePohst}.  The following lemma
is a helpful guide to when Fincke and Pohst should be applied.
\begin{lem}\label{lem:lambdaL}
Let $L$ be a sublattice of $\Z^r$. Suppose $\lambda(L)>2 \cB_{f}$,
where $\lambda(L)$ denotes the length of the shortest
non-zero vector in $L$. Then there is at most one vector
$\bb$ in the coset $\ww+L$
satisfying $\lVert \bb \rVert_2 \le \cB_{f}$.
Moreover, any such $\bb$ is equal to $\ww+\cc(L,-\ww)$.
\end{lem}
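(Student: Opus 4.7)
The plan is to argue uniqueness first, then identify the unique candidate with $\ww+\cc(L,-\ww)$.

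For uniqueness, I would suppose that $\bb_1,\bb_2 \in \ww+L$ both satisfy $\lVert \bb_i \rVert_2 \le \cB_f$. Then their difference $\bb_1-\bb_2$ lies in $L$, and by the triangle inequality
\[
\lVert \bb_1-\bb_2 \rVert_2 \;\le\; \lVert \bb_1 \rVert_2 + \lVert \bb_2 \rVert_2 \;\le\; 2\cB_f \;<\; \lambda(L).
\]
Since $\lambda(L)$ is the length of the shortest nonzero vector in $L$, this forces $\bb_1-\bb_2 = 0$.

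For the explicit description of the unique candidate, I would observe that $\ww + \cc(L,-\ww)$ is itself an element of the coset $\ww+L$. By Lemma~\ref{lem:closest}, its $2$-norm equals $D(L,\ww)$, which is the minimum of $\lVert \bb \rVert_2$ over $\bb \in \ww+L$. Hence if there exists any $\bb \in \ww+L$ with $\lVert \bb \rVert_2 \le \cB_f$, then in particular $\lVert \ww+\cc(L,-\ww) \rVert_2 = D(L,\ww) \le \cB_f$ as well. The uniqueness statement just proved then forces $\bb = \ww+\cc(L,-\ww)$, completing the lemma.

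The proof is essentially a one-line triangle-inequality argument combined with the defining property of $D(L,\ww)$ from Lemma~\ref{lem:closest}; there is no real obstacle to overcome.
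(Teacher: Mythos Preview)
Your proof is correct and follows the same approach as the paper: the uniqueness step via the triangle inequality and $\lambda(L)>2\cB_f$ is identical, and your identification of the unique candidate with $\ww+\cc(L,-\ww)$ via Lemma~\ref{lem:closest} is exactly what the paper intends (it simply says ``the second part follows from Lemma~\ref{lem:closest}'').
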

\begin{proof}
Suppose there are vectors $\bb_1$, $\bb_2 \in \ww+L$
both satisfying $\lVert \bb_i \rVert_2 \le \cB_{f}$.
Then $\bb_1-\bb_2 \in L$ and $\lVert \bb_1-\bb_2 \rVert_2 \le 2 \cB_{f}$.
As $\lambda(L)> 2 \cB_{f}$ we see that $\bb_1=\bb_2$.
The second part follows from Lemma~\ref{lem:closest}.
\end{proof}
We continue
sieving until the lattices $L$ satisfy  $\lambda(L)> 2 \cB_{f}$.
We then apply the Fincke-Pohst algorithm to determine
$\cc(L,-\ww)$ and check whether the vector $\bb= \ww+\cc(L,-\ww)$
leads to a solution.

%-------------------------------------------------------------------------------------------%

\subsection{Sieving using the primes of $S$}
%We continue with the above notation. In particular,
To recap, we seek solutions $(X,Y,\bb)$ to
\[a_0 X- \theta Y=\tau \cdot \delta_1^{b_1} \cdots \delta_r^{b_r},\]
subject to the conditions
\[X,~Y \in \Z, \quad \gcd(X,Y)=1, \quad \gcd(a_0,Y)=1, \quad
b_i \in \Z,\]
and such that
\[\lVert \bb \rVert_2 \le \cB_{f}, \quad \text{ and } \quad  0 \le \; \ord_\fp(a_0 X- \theta Y) \; \le \kappa_\fp \quad \text{for every $\fp \in S$}.\]
%(where
%$\bb=(b_1,\dotsc,b_r)$) to \eqref{eqn:TMdelta}
%satisfying \eqref{eqn:restrictions}, \eqref{eqn:cBf}
%and also satisfying \eqref{eqn:finalvalbd} for every $\fp \in S$,
%where $\kappa_\fp$ is some fixed non-negative integer.
In particular, this last inequality \eqref{eqn:finalvalbd} asserts that
$\ord_\fp(a_0 X-\theta Y)$ belongs to a certain
set of values $0,1,\dotsc,\kappa_\fp$.
The following proposition %Proposition~\ref{prop:siftI}
reduces this list somewhat, and for any $k$
in this reduced list, yields a vector $\ww_k$
and a sublattice $L_k$ of $\Z^r$ such that $\bb \in \ww_k+L_k$ whenever
$\ord_{\fp}(a_0 X- \theta Y)=k$.% then $\bb \in \ww_k+L_k$.
\begin{prop}\label{prop:siftI}
Let $\fp \in S$.
Let $\theta_0 \in \Z$ satisfy $\theta_0 \equiv \theta \pmod{\fp^{\kappa_{\fp}}}$.
Let $\fa$ and $\cT_1$ be as in \eqref{eqn:facT1}.
Define
\[
\eta \; : \; \Z^r \rightarrow \Z, \qquad
\eta(n_1,\dotsc,n_r)=n_1 \ord_\fp(\delta_1)+\cdots+n_r \ord_\fp(\delta_r),
\qquad
L^{\prime\prime}=\Ker(\eta).
\]
Let
\[
\cK^{\prime\prime} \; := \; \left(\ord_\fp(\tau) + \Img(\eta) \right)
\; \cap \; \{ 0 \le k \le \kappa_\fp \; : \; \gcd(\fa^k,\theta-\theta_0) = \gcd(\fa^k,\cT_1) \}.
\]
For $k \in \cK^{\prime\prime}$,
let $\ww^{\prime\prime}_k$
be any vector in $\Z^r$ satisfying
$\eta(\ww^{\prime\prime}_k)=k-\ord_\fp(\tau)$.
If $k \in \cK^{\prime\prime}$ satisfies $k \ge 1$,
we will let $\phi$ and $\tau_0$ be as in Proposition~\ref{prop:valbd}
(these depend on $k$).
Let
\[
\cK^\prime \; := \begin{cases}
\{ k \in \cK^{\prime\prime} \; :\;
\overline{\tau_0} \in \Img(\phi)
\} & \text{if $0 \notin \cK^{\prime\prime}$}\\
\{0\} \cup \{ k \in \cK^{\prime\prime} \; :\; \overline{\tau_0} \in \Img(\phi)
\} & \text{if $0 \in \cK^{\prime\prime}$}.\\
\end{cases}
\]
For $k \in \cK^\prime$ with $k \ge 1$,
we let $L^\prime_k=\Ker(\phi)$ and $\ww_k^\prime$ be
any vector in $\Z^r$ satisfying $\phi(\ww_k^\prime)=\overline{\tau_0}$.
Let $\ww_0^\prime=\mathbf{0}$, $L_0^\prime=\Z^r$, and
\[
\cK \; :=
\{ k \in \cK^{\prime} \; :\;
(\ww^{\prime\prime}_k+L^{\prime\prime}) \cap (\ww^\prime_k+L_k^\prime) \ne \emptyset
\}.
\]
For $k \in \cK$, write
\[
L_k \; := \; L^{\prime\prime} \cap L_k^\prime
\]
and choose any $\ww_k \in \Z^r$ such that
\[
\ww_k+L_k \; := \;
(\ww^{\prime\prime}_k+L^{\prime\prime}) \cap (\ww^\prime_k+L_k^\prime).
\]
Let $k=\ord_\fp(a_0 X-\theta Y)$. Then $k \in \cK$
and $\bb \in \ww_k+L_k$.
\end{prop}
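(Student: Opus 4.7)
The plan is to set $k := \ord_\fp(a_0 X - \theta Y)$ and to verify the two assertions $k \in \cK$ and $\bb \in \ww_k + L_k$ by walking through the three nested sets $\cK'' \supseteq \cK' \supseteq \cK$ in turn, at each stage simultaneously locating $k$ in the set and placing $\bb$ in a corresponding coset. The proof is essentially a bookkeeping argument driven by Proposition~\ref{prop:valbd}; there is no deep new content beyond that result.

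First I would show $k \in \cK''$ and $\bb \in \ww_k'' + L''$. Taking $\ord_\fp$ of the equation $a_0 X - \theta Y = \tau \cdot \delta_1^{b_1} \cdots \delta_r^{b_r}$ gives $k = \ord_\fp(\tau) + \eta(\bb)$, so $k - \ord_\fp(\tau) \in \Img(\eta)$, i.e.\ $k \in \ord_\fp(\tau) + \Img(\eta)$. The bound $0 \le k \le \kappa_\fp$ holds by \eqref{eqn:finalvalbd}. If $k \ge 1$, then Proposition~\ref{prop:valbd}(i), applied with that value of $k$, yields $\gcd(\fa^k, \theta - \theta_0) = \gcd(\fa^k, \cT_1)$, since otherwise we would have $\ord_\fp(a_0 X - \theta Y) \le k-1$, contradicting the definition of $k$; the case $k = 0$ of this condition is in the definition of $\cK''$ as a trivial disjunct. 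Either way $k \in \cK''$. Since $\eta(\bb) = k - \ord_\fp(\tau) = \eta(\ww_k'')$, we have $\bb - \ww_k'' \in \Ker(\eta) = L''$.

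Next I would promote $k$ to $\cK'$ and show $\bb \in \ww_k' + L_k'$. For $k = 0$ this is immediate from the conventions $\ww_0' = \mathbf{0}$, $L_0' = \Z^r$. For $k \ge 1$, Proposition~\ref{prop:valbd}(ii), again applied with the same $k$, forces $\overline{\tau_0} \in \Img(\phi)$: in the contrary case we would again obtain $\ord_\fp(a_0 X - \theta Y) \le k-1$. Hence $k \in \cK'$. Moreover, following the proof of Proposition~\ref{prop:valbd}(ii), the reduction of the defining equation modulo $\fb$ yields $\phi(\bb) = \overline{\tau_0} = \phi(\ww_k')$, so $\bb - \ww_k' \in \Ker(\phi) = L_k'$, i.e.\ $\bb \in \ww_k' + L_k'$.

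Combining the two previous paragraphs gives $\bb \in (\ww_k'' + L'') \cap (\ww_k' + L_k')$, so this intersection is non-empty; hence $k \in \cK$. By the definition of $\ww_k$ and $L_k$, the intersection equals $\ww_k + L_k$, and therefore $\bb \in \ww_k + L_k$, as required. The only mildly awkward point is the parallel bookkeeping for $k = 0$ (where part (ii) of Proposition~\ref{prop:valbd} does not apply, and the conventions $\ww_0' = \mathbf{0}$, $L_0' = \Z^r$ must be used); apart from that the argument is a direct translation of Proposition~\ref{prop:valbd} into the language of cosets.
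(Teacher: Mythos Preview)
Your proof is correct and follows essentially the same approach as the paper: walking through the chain $\cK'' \supseteq \cK' \supseteq \cK$ using \eqref{eqn:finalvalbd}, \eqref{eqn:TMdelta}, and parts (i) and (ii) of Proposition~\ref{prop:valbd} (together with its proof for the coset membership $\phi(\bb)=\overline{\tau_0}$), with the $k=0$ case handled separately via the conventions $\ww_0'=\mathbf{0}$, $L_0'=\Z^r$. Your write-up is in fact slightly more careful than the paper's in distinguishing the $k=0$ case for the gcd condition, where Proposition~\ref{prop:valbd} does not directly apply.
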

\begin{proof}
By \eqref{eqn:finalvalbd}, the valuation $k:=\ord_\fp(a_0 X- \theta Y)$
satisfies
$0 \le k \le \kappa_\fp$. Moreover, by Proposition~\ref{prop:valbd}, part (i),
we have $\gcd(\fa^k,\theta-\theta_0)=\gcd(\fa^k,\cT_1)$. By \eqref{eqn:TMdelta}, we know $k \in \ord_\fp(\tau)+\eta(\bb)$ and thus $k \in \cK^{\prime\prime}$
and $\bb \in \ww_k^{\prime\prime}+L^{\prime\prime}$.
In particular, the proposition follows in the case $k=0$.
We therefore suppose $k \ge 1$. By Proposition~\ref{prop:valbd},
part (ii) and its proof, it follows that $k \in \cK^\prime$ and $\bb \in \ww_k^\prime+L_k^\prime$, completing the proof.
\end{proof}

%\medskip

\noindent \textbf{Remark.} For each prime $\fp \in S$, Proposition~\ref{prop:siftI}
yields a number of cosets $\ww_k+L_k$ and tells us that
$\bb$ belongs to one of them. Note that $L^{\prime\prime}$ is a subgroup of
$\Z^r$ of rank $r-1$. Moreover, the subgroup $L_k^\prime$ has
finite index in $\Z^r$. Therefore $L_k:=L_k^\prime \cap L^{\prime\prime}$
has rank $r-1$.
From the remarks following Proposition~\ref{prop:valbd} (where
$L_k^\prime$ is called $L$) we expect $L_k^\prime$ to have
index $p^{(d-2)k}$ in $\Z^r$. In particular, the larger the
value of $k$, the larger the index of $L_k^\prime$.
Of course the number of cosets is bounded above by $\kappa_p+1$.

%-------------------------------------------------------------------------------------------%

\subsection{Sieving with other primes}
% Let $\OO_K$ be the ring of integers of $K=\Q(\theta)$.
Given a prime
ideal $\fq$ of $\OO_K$, write $\OO_\fq$ for the localization
of $\OO_K$ at $\fq$,
\[
\OO_\fq=\{ \alpha \in K \; : \; \ord_\fq(\alpha) \ge 0\}.
\]
Now let $q$ be a rational prime. Define
\[
\OO_q=\bigcap_{\fq \mid q} \OO_\fq=\{ \alpha \in K \; : \;
\text{$\ord_\fq(\alpha) \ge 0$ for all $\fq \mid q$}\}.
\]
The group of invertible elements $\OO_q^\times$
consists of all $\alpha \in K$
such that $\ord_\fq(\alpha)= 0$ for all
prime ideals $\fq \mid q$.

Let $\tau$, $\delta_1,\dotsc,\delta_r$ be as in \eqref{eqn:TMdelta}.
Let $q$ be a rational prime coprime to the supports
of $\tau$, $\delta_1,\dotsc,\delta_r$.
Thus $\tau$, $\delta_1,\dotsc,\delta_r$ all belong
to $\OO_q^\times$.
Let
\[
\fA_q:=(\OO_q /q\OO_q)^\times.
\]
This is canonically isomorphic to $(\OO_K/ q\OO_K)^\times$.
Let
\[
\mu : \F_q^\times \hookrightarrow \fA_q, \qquad \alpha+q\Z \mapsto \alpha+q\OO_q
\]
be the natural map, and let
\[
\fB_q:=\fA_q/\mu(\F_q^\times)
\]
be the cokernel of $\mu$.
We denote the induced homomorphism $\OO_q^\times \rightarrow \fB_q$ by
\[
\pi_q \; : \; \OO_q^\times \rightarrow \fB_q,
\qquad \beta \mapsto (\beta+q \OO_q) \cdot \mu(\F_q^\times).
\]

Define
\[
\phi_q \; : \; \Z^r  \rightarrow \fB_q,
\qquad
(m_1,\dotsc,m_r) \mapsto \pi_q(\delta_1)^{m_1} \cdots \pi_q(\delta_r)^{m_r}.
\]

\begin{prop}\label{prop:siftII}
%Let $L_q \subset \Z^r$ be the kernel of $\phi_q$.
Let
\[
R_q=\{a_0 u-\theta  : u \in \{0,1,\dotsc,q-1\}\} \cup \{ a_0\}
\]
and
\[
S_q=\{\pi_q(r)/\pi_q(\tau) \; : \; r \in R_q \cap \OO_q^\times\}
\subseteq \fB_q.
\]
Let
\[
T_q=S_q \cap \phi_q(\Z^r)
\]
and $L_q=\Ker(\phi_q)$. Finally, let $W_q \subset \Z^r$ be a set
of size $\# T_q$ such that for every $t \in T_q$
there is some $\ww \in W_q$ with $\phi_q(\ww)=t$.
Then $\bb \in W_q+L_q$.
%Then $\phi_q(\bb) \in T_q$.
%Let $\sS$ be the set of
% solutions $\bb=(b_1,\dotsc,b_r) \in \Z^r$ to \eqref{eqn:TMdelta}
%satisfying \eqref{eqn:restrictions}.
%Then $\phi_q(\sS) \subseteq T_q$.
\end{prop}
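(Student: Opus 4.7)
\emph{Proof plan for Proposition~\ref{prop:siftII}.}

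The goal is to produce an element $r\in R_q\cap\OO_q^\times$ such that $\pi_q(r)/\pi_q(\tau)=\phi_q(\bb)$ in $\fB_q$; this will immediately give $\phi_q(\bb)\in T_q$, and by the defining property of $W_q$ there will then be some $\ww\in W_q$ with $\phi_q(\ww)=\phi_q(\bb)$, whence $\bb-\ww\in\Ker(\phi_q)=L_q$, so $\bb\in W_q+L_q$. First I would observe that since $q$ is coprime to the supports of $\tau,\delta_1,\dotsc,\delta_r$, all of these elements lie in $\OO_q^\times$; hence by \eqref{eqn:TMdelta} the quantity $a_0X-\theta Y=\tau\cdot\delta_1^{b_1}\cdots\delta_r^{b_r}$ lies in $\OO_q^\times$ as well, and moreover
\[
\pi_q(a_0X-\theta Y)/\pi_q(\tau)\;=\;\pi_q(\delta_1)^{b_1}\cdots\pi_q(\delta_r)^{b_r}\;=\;\phi_q(\bb)\,.
\]

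The main work will be to show $\pi_q(a_0X-\theta Y)=\pi_q(r)$ in $\fB_q$ for a suitable $r\in R_q\cap\OO_q^\times$, and this splits naturally into two cases using $\gcd(X,Y)=\gcd(a_0,Y)=1$. If $q\nmid Y$, let $u\in\{0,1,\dotsc,q-1\}$ be such that $u\equiv XY^{-1}\pmod{q}$; then $a_0X-\theta Y\equiv Y(a_0u-\theta)\pmod{q\OO_q}$. Since $Y\in\Z\setminus q\Z$ its image in $\fA_q$ lies in $\mu(\F_q^\times)$ and so is killed in $\fB_q$, giving $\pi_q(a_0X-\theta Y)=\pi_q(a_0u-\theta)$, and I would take $r=a_0u-\theta$. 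If instead $q\mid Y$, then $q\nmid X$ and $q\nmid a_0$, so $a_0X-\theta Y\equiv a_0X\pmod{q\OO_q}$; again $X$ has image in $\mu(\F_q^\times)$, so $\pi_q(a_0X-\theta Y)=\pi_q(a_0)$, and I would take $r=a_0$. In both cases $r\in R_q$ by construction.

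It remains to verify $r\in\OO_q^\times$; this is the step that needs a small but careful argument. In the first case, $r\equiv Y^{-1}(a_0X-\theta Y)\pmod{q\OO_q}$ is the product in $\fA_q$ of two elements whose images in $\fA_q$ are invertible (namely $Y$ and $a_0X-\theta Y$, the latter because $a_0X-\theta Y\in\OO_q^\times$), so the image of $r$ in $\fA_q=\OO_q/q\OO_q$ is a unit; since the maximal ideals of $\OO_q$ are precisely the extensions of the $\fq\mid q$ and these are visible inside $\fA_q$, an element of $\OO_q$ whose reduction modulo $q\OO_q$ is a unit is itself in $\OO_q^\times$. In the second case $r=a_0$ with $q\nmid a_0$, so $r\in\OO_q^\times$ is immediate. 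Thus $r\in R_q\cap\OO_q^\times$, and combining with the calculation above yields $\pi_q(r)/\pi_q(\tau)=\phi_q(\bb)$, i.e.\ $\phi_q(\bb)\in S_q\cap\phi_q(\Z^r)=T_q$, completing the proof as outlined in the first paragraph. The only real subtlety is the bookkeeping around $\mu(\F_q^\times)$ and the identification $\OO_q/q\OO_q=\OO_K/q\OO_K$; no deeper ingredient is required.
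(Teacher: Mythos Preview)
Your proof is correct and follows essentially the same approach as the paper's own proof: both establish $a_0X-\theta Y\in\OO_q^\times$, perform the same case split on $q\mid Y$ versus $q\nmid Y$ to identify $r\in R_q$ with $a_0X-\theta Y\equiv v\cdot r\pmod{q\OO_q}$ for some $v\in\F_q^\times$, deduce $r\in\OO_q^\times$ from the fact that $a_0X-\theta Y\in\OO_q^\times$, and conclude $\phi_q(\bb)\in T_q$. Your write-up is slightly more explicit about why $r\in\OO_q^\times$ (via the unit image in $\OO_q/q\OO_q$), but the argument is the same.
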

\begin{proof}
%Let $\bb \in \sS$ and let $(X,Y)$ be the corresponding
%rational integers such that \eqref{eqn:TMdelta},
%\eqref{eqn:restrictions} are satisfied.
Since $\tau$, $\delta_1,\dotsc,\delta_r \in \OO_q^\times$,
we have $a_0 X-\theta Y \in \OO_q^\times$.
We want to determine the possibilities for
the image of the algebraic integer $a_0 X- \theta Y$ in
$\fB_q$.
Since $X$ and $Y$ are coprime, $q$ divides at most one of $X$, $Y$.
If $q \nmid Y$ then
\[
a_0X- \theta Y \equiv v \cdot (a_0 u -\theta) \pmod{q \OO_q}
\]
for some $u \in \{0,1,\dotsc,q-1\}$ and some $v \in \F_q^\times$.
If $q \mid Y$ then $q \nmid X$ and
\[a_0 X-\theta Y \equiv a_0 v \pmod{q \OO_q}\]
for some $v \in \F_q^\times$. We conclude that
$a_0 X-\theta Y \equiv v \cdot r \pmod{q \OO_q}$
where $v \in \F_q^\times$ and $r \in R_q$. Moreover,
since $a_0X-\theta Y\in \OO_q^\times$ we see
that $r \in R_q \cap \OO_q^\times$. Now
\[
\pi_q(a_0 X - \theta Y)=\pi_q(r) \pi_q(v)=\pi_q(r).
\]
It follows that $\pi_q(a_0 X- \theta Y)/\pi_q(\tau) \in S_q$.
However
\[
\phi_q(\bb)=\pi_q(\delta_1)^{b_1} \cdot \cdots \cdot
\pi_q(\delta_r)^{b_r}=\pi_q(a_0 X-\theta Y)/\pi_q(\tau),
\]
where the first equality follows from the definition of $\phi_q$
and the second from \eqref{eqn:TMdelta}.
Thus $\phi_q(\bb)=t$ for some $t \in T_q$.
By definition of $W_q$, there is some $\ww \in W_q$
with $\phi_q(\ww)=t=\phi_q(\bb)$, thus $\bb-\ww \in L_q$.
\end{proof}

%\bigskip

\noindent \textbf{Heuristic.}
It is appropriate that we heuristically \lq measure\rq\ the quality of
information that Proposition~\ref{prop:siftII} gives us about the solutions.
A priori, $\phi_q(\bb)$ could be any element in $\phi_q(\Z^r) \subseteq \fB_q$.
However, the lemma tells us that $\phi_q(\bb)$ belongs to
$T_q$. We want to estimate the ratio
$\# T_q/\#\phi_q(\Z^r)$; the smaller this ratio is, the better
the information is. It is convenient to
suppose that $q$ is unramified in $\OO_K$. Thus
\[
\OO_q/q\OO_q \cong \OO_K/q \OO_K \cong \bigoplus_{\fq \mid q} \OO_K/\fq.
\]
Each summand $\OO_K/\fq$ is a finite field of cardinality $\Norm(\fq)$.
By definition
\[
\fA_q:=(\OO_q/q\OO_q)^\times \cong \prod_{\fq \mid q} (\OO_K/\fq)^\times.
\]
Thus
\[
\# \fA_q=\prod_{\fq \mid q} (\Norm(\fq)-1),
\qquad
\# \fB_q=\frac{1}{q-1}\cdot \prod_{\fq \mid q} (\Norm(\fq)-1).
\]
%It is important to note, for later reference, that $\#\fB_q$ can be a smooth number
%(i.e.\ divisible only by small primes)
%if $q$ is chosen appropriately.
Moreover
$\prod_{\fq \mid q} \Norm(\fq)=q^d$ where $d=[K:\Q]$
is the degree of the original Thue--Mahler equation.
Thus $\# \fB_q \approx q^{d-1}$. However
$S_q \subseteq \phi(\Z^r) \subseteq \fB_q$ has
at most $q+1$ elements,
and so $\# S_q/\#\fB_q \lessapprox 1/q^{d-2}$. Now
\[
\frac{\#T_q}{\#\phi_q(\Z^r)}=\frac{\# S_q \cap \phi_q(\Z^r)}{
\# \phi_q(\Z^r)}.
\]
It is reasonable to expect that the elements of $S_q$
are uniformly distributed among the elements of $\fB_q$ and so we expect $\#T_q/\#\phi_q(\Z^r) \lessapprox 1/q^{d-2}$.

\subsection{The sieve}
We will sieve with the primes $\fp \in S$ as in Proposition~\ref{prop:siftI}
and also with additional rational primes $q$ as
in Proposition~\ref{prop:siftII}. We would like to choose
a suitable set $\sS$ of such primes $q$.
%We choose $S^\prime$ a list of primes $q$ whose support in $K$
%is disjoint from that of $\tau$ and $\delta_1,\dotsc,\delta_r$.
The most expensive computation we will need to do for $q \in \sS$
is to
compute, for each $t \in T_q$, some $\ww \in \Z^r$
such that $\phi_q(\ww)=t$. This involves
a discrete logarithm computation in the group
$\mathfrak{B}_q$, and to do this quickly
we need $\mathfrak{B}_q$ to be a product
of cyclic factors that have relatively small order.
We therefore like to avoid those $q$ where there are $\fq \mid q$
that have large norm.  In all our examples we found it enough to
take $\sS$ to be the set of primes $q \le 500$, where
each $\fq \mid p$ satisfies $\norm(\fq) \le 10^{10}$ and where
the support of $q$ is disjoint from the supports of
$\tau$, $\delta_1,\dotsc,\delta_r$.

\begin{proc}\label{proc}
$\texttt{Solutions}(L_c,\ww_c,S_c,\sS_c)$.\\[2ex]
\textbf{Input:} $L_c$ sublattice of $\Z^r$, $\ww_c \in \Z^r$, $S_c \subseteq S$,
$\sS_c\subseteq \sS$.\\[2ex]
\textbf{Output:} Set of solutions $(X,Y,\bb)$ to \eqref{eqn:TMdelta},
\eqref{eqn:restrictions} satisfying $\bb \in \ww_c+L_c$
and $\lVert \bb \rVert_2 \le \cB_{f}$.\\[2ex]
\begin{tabbing}
1. \textbf{IF} \= $\lambda(L_c)>2\cB_{f}$ or ($S_c=\emptyset$ and $\sS_c=\emptyset$) \textbf{THEN}\\
2. \> Apply Fincke--Pohst to find all vectors in $\bb \in \ww_c+L$
satisfying $\lVert \bb \rVert_2 \le \cB_{f}$. \\
3. \> Keep only those $\bb$ that lead to
solutions $(X,Y)$ on \eqref{eqn:TMdelta}, \eqref{eqn:restrictions}.\\
4. \> \textbf{RETURN:} Set  of $(X,Y,\bb)$.\\
5. \> \textbf{END}.\\
6. \textbf{ELSE}\\
7. \> \textbf{IF} \= $S_c \ne \emptyset$ \textbf{THEN}\\
8. \>\> Choose $\fp \in S_c$. Let $S_c^\prime=S_c \setminus \{\fp\}$.\\
9. \>\> Compute $\cK$ as in Proposition~\ref{prop:siftI}.\\
10. \>\> For each $k \in \cK$ compute $\ww_k$, $L_k$ as in Proposition~\ref{prop:siftI}.\\
11. \>\>Let $\cK^*$ be the subset of $k\in \cK$ such that $(\ww_k+L_k)\cap (\ww_c+L_c) \ne \emptyset$.\\
12. \>\>For each $k \in \cK^*$ let $L_{c,k}=L_c \cap L_k$.\\
13. \>\> For each $k \in \cK^*$ choose
$\ww_{c,k}\in \Z^r$ so that $\ww_{c,k}+L_{c,k}=(\ww_k+L_k)\cap (\ww_c+L_c)$.\\
14. \>\> \textbf{RETURN:} $\displaystyle \bigcup_{k \in \cK^*} \mathtt{Solutions}(L_{c,k},\ww_{c,k},S_c^\prime,\sS_c)$.\\
15. \>\> \textbf{END}.\\
16. \>\textbf{ELSE}\\
17. \>\> Choose $q \in \sS_c$. Let $\sS_c^\prime=\sS_c \setminus \{q\}$.\\
18. \>\> Compute $W_q$, $L_q$ as in Proposition~\ref{prop:siftII}.\\
19. \>\> Let $L_c^\prime=L_c \cap L_q$.\\
20. \>\> Let $W_q^*=\{\ww_1,\dotsc,\ww_m\}$ be the subset of $\ww \in W_q$ such that
$(\ww+L_q) \cap (\ww_c+L_c) \ne \emptyset$. \\
21. \>\> For $i=1,\dotsc,m$ choose $\ww_{c,i}$ such that
$\ww_{c,i}+L_c^\prime=(\ww+L_q) \cap (\ww_c+L_c)$.\\
22. \>\> \textbf{RETURN:} $\displaystyle \bigcup_{i=1}^m \mathtt{Solutions}(L_{c}^\prime,\ww_{c,i},\emptyset,\sS_c^\prime)$.\\
23. \>\> \textbf{END}.\\
24. \>\textbf{ENDIF}\\
25. \textbf{ENDIF}
\end{tabbing}
\end{proc}
Let us explain how Procedure~\ref{proc} works.
The procedure starts with a coset $\ww_c+L_c$
and sets $S_c \subseteq S$ and $\sS_c \subseteq \sS$
(the subscript $c$ stands for \lq cumulative\rq).
The objective is to return all solutions $(X,Y,\bb)$ to
\eqref{eqn:TMdelta}, \eqref{eqn:restrictions}
with $\bb \in \ww_c+L_c$ and
satisfying $\lVert \bb\rVert_2 \le \cB_f$. The primes in $S_c$ and $\sS_c$
are used, via Propositions~\ref{prop:siftI} and~\ref{prop:siftII},
to replace $\ww_c+L_c$ by a union of cosets of sublattices of $L_c$.

We now explain lines 1--5 of the procedure.
If $\lambda(L)>2 \cB_f$, then by Lemma~\ref{lem:lambdaL}, the coset
$\ww_c+L_c$ has at most one vector $\bb$ that satisfies
$\lVert \bb \rVert_2 \le \cB_f$, and this maybe found by
the algorithm of Fincke and Pohst. If $S_c=\emptyset$ and $\sS_c=\emptyset$,
then we have run out of sieving primes and we simply apply
the Fincke-Pohst algorithm to determine all $\bb \in \ww_c+L_c$
such that $\lVert \bb \rVert_2 \le \cB_f$.
We test
all resulting $\bb$ to see if they lead to solutions
$(X,Y,\bb)$ and return the set of solutions. We end here.
In both these cases,
no further branching of the procedure occurs.

If we have reached line 6, then either $S_c$ is non-empty
or $\sS_c$ is non-empty. We first treat the case where
$S_c$ is non-empty (lines 8--14). We choose $\fp \in S_c \subseteq S$
to sieve with and let $S_c^\prime=S_c \setminus \{\fp\}$.
Here we apply Proposition~\ref{prop:siftI}.
This gives a finite set $\cK$ of values $k$ and lattice
cosets $\ww_k+L_k$ such that $\bb \in \ww_k+L_k$ for some $k \in \cK$.
However, the $\bb$ we are interested in belong to $\ww_c+L_c$.
We let $\cK^*$ be those values $k \in \cK$ such that
$(\ww_c+L_c) \cap (\ww_k+L_k) \ne \emptyset$. It is now clear
that every $\bb$ we seek belongs to
$(\ww_c+L_c) \cap (\ww_k+L_k)$ for some $k \in \cK^*$.
However $(\ww_c+L_c) \cap (\ww_k+L_k)=\ww_{c,k}+L_{c,k}$
where $L_{c,k}=L_c \cap L_k$, for a suitable coset
representative $\ww_{c,k}$. We apply the procedure
to the set $(L_{c,k},\ww_{c,k},S_c^\prime,\sS_c)$
for each $k \in \cK^*$ to compute those $\bb$ belonging
to $\ww_{c,k}+L_{c,k}$ and return the union.

If however $S_c=\emptyset$, then (lines 17--22) we
choose a prime $q \in \sS_c \subseteq \sS$ to sieve with,
and we let $\sS_c^\prime=\sS_c \setminus \{q\}$.
Now we apply Proposition~\ref{prop:siftII}.
This gives a lattice $L_q$ and a finite set $W_q$
such that $\bb \in W_q+L_q$. Therefore there is some
$\ww \in W_q$ such that $\bb \in (\ww+L_q) \cap (\ww_c+L_c)$.
We let $W_q^*$ be the subset of those $\ww \in W_q$
such that $(\ww+L_q) \cap (\ww_c+L_c) \ne \emptyset$,
and write $W_q^*=\{\ww_1,\dotsc,\ww_m\}$.
Now $\bb \in (\ww_i+L_q) \cap (\ww_c+L_c)$ for some
$i=1,\dots,m$. Write $L_{c}^\prime=L_c \cap L_q$.
Then $(\ww_i+L_q) \cap (\ww_c+L_c)$ is a coset of $L_c^\prime$
for $i=1,\dotsc,m$, and we choose $\ww_{c,i}$
so that $\ww_{c,i}+L_c^\prime=(\ww_i+L_q) \cap (\ww_c+L_c)$.
It is therefore enough to find the $\bb$
belonging to each one of these $\ww_{c,i}+L_c^\prime$.
Thus we apply the procedure to $(L_c^\prime,\ww_{c,i},\emptyset,\sS_q^\prime)$
for $i=1,\dotsc,m$, collect the solutions and return their union (line 22).

\medskip

\noindent \textbf{Remarks.}
\begin{itemize}
\item To compute the solutions to \eqref{eqn:TMdelta}
satisfying \eqref{eqn:restrictions}, it is clearly enough
to apply the above procedure to $(\Z^r,\textbf{0},S,\sS)$.
\item Recall that $\delta_1,\dots,\delta_r$ is a basis
for the $S$-units (modulo torsion); in particular this allows us to identify the $S$-units (modulo torsion) with $\Z^r$.
Let $\fp \in S$ and $\eta$ be as in
Proposition~\ref{prop:siftI}. Note that $L_k$ is a subgroup
of finite index in $\Ker(\eta)$. Now $\Ker(\eta)$
itself corresponds to the $(S\setminus\{\fp\})$-units, and therefore
has rank $r-1$. Therefore $L_k$ has rank $r-1$.
That is, if we apply the procedure to
$(\Z^r,\textbf{0},S,\sS)$, then at depth $\#S+1$ (when the set $S$
has been entirely depleted),
the lattice $L_c$ will have rank $r-\#S$ which is the unit rank.
Beyond this depth, the rank remains constant but the determinant
of the lattice grows.
\item The reader will note that we have not specified how to choose
the next prime $\fp \in S$ or $q \in \sS$. In our implementation
we order the primes in $\fp \in S$ by the size of their norms; from
largest to smallest. The reason is that the primes $\fp \in S$
of large norm lead to lattices of large determinants and we
therefore expect few short vectors. Once $S$ is exhausted,
the choices we make for the next $q \in \sS$  actually depend
on the cumulative lattice $L_c$. We choose the prime $q \in \sS_c$
that minimizes $\# W_q / [L_c : L_c \cap L_q]$. Our justification
for this is that we are replacing one coset of $L_c$ with a union
of cosets of $L_c \cap L_q$. The number of such cosets is bounded
by $\#W_q$. The function $q \mapsto \# W_q / [L_c : L_c \cap L_q]$
estimates the \lq relative change in density\rq\ between the old
lattice and the new union for that particular choice of $q$.
\end{itemize}

%-------------------------------------------------------------------------------------------%

\subsection{Example~\ref{ex:Ex4} continued}
Recall that $\cB_f^\prime=180$. Following the remark in Section~\ref{sec:LFRL}, we find that $\cB_f \approx 402.67$. %approx 831.679.
Consider the information given by Proposition~\ref{prop:siftI}.
Recall there are five possibilities for $\fp \in S$, ordered as $\fp_1,\dotsc,\fp_{5}$,
in order of decreasing norm. Table~\ref{table:bounds}
yields $2$, $3$, $5$, $6$, $9$ for $\kappa_{\fp_j}$ with $j = 1, \dots, 5$, respectively.

\begin{table}[h]
{\renewcommand{\arraystretch}{1.1} %<- modify value to suit your needs
\begin{tabular}{|c|l|l|}
\hline
$\fp$ & $\cK$ & $\det(L_k)$ with $k \in \cK$ \\
\hline\hline
$\fp_1$ & $\{0,1\}$ &
$1$, $6616761038619033600$ \\
\hline
$\fp_2$ & $\{0,1,2,3\}$ &
$1$, $2114272224838656$, $3442909640611645594437761516544$,\\
& & $5606480875148980721912830543593855583743968256$\\
\hline
$\fp_3$ & $\{0,1,2,3,4,5\}$ &
$1$, $1$, $3800066789376$, $14496104390625000000000000$,\\
& & $55298249781131744384765625000000000000$,\\
& &  $210946082233931520022451877593994140625000000000000$\\
\hline
$\fp_4$ & $\{0, 1, 2, 3, 6\}$ &
$1$, $504631296$, $21722722606780416$, \\
& &  $935091979414469275815936$, \\
& & $54375352676603537816702220559499682956095667933184$
\\
\hline
$\fp_5$ & $\{0, 1, 5\}$ &
$1$, $57600$, $1062532458645670173081600$
\\
\hline \hline
\end{tabular}}
\vspace{0.5em}
\caption{This table gives the sets $\cK$ and the
determinants of the sublattices $L_k \subset \Z^{10}$ with $k \in \cK$
as in Proposition~\ref{prop:siftI}. Observe that the
sublattices $L_k$ all have rank $r-1=9$.}
\end{table}

We take $\sS$ to be the set of rational primes $q<200$ coprime to the prime ideals in $S$ and such that every prime ideal factor of $q\OO_K$ has norm $\le 10^{10}$. This is done in order to keep our computations fast, as previously explained. However, of this set, our program only needs to use the primes $23$ and $71$, selected in that order using the heuristic detailed in the above remarks. For $q = 23$ and $q = 71$, Proposition~\ref{prop:siftII} gives a lattice $L_{q} \subset \Z^{10}$ (now
of rank $10$) and
a set $W_{q}$ such that $\bb \in W_{q}+L_{q}$. We find that
\[
[\Z^{10}: L_{71}]=3253933989048960000 \approx 3.26 \times 10^{18}
\quad \text{ with } \quad
\# W_{71}=71,
\]
and
% \[
% [\Z^{10}:L_{1753}]=273574387411035393424034142093312 \approx 2.74 \times 10^{32},
% \qquad
% \# W_{1753}=1750,
% \]
% and
\[
[\Z^{10}: L_{23}]=41191874887680 \approx 4.12 \times 10^{13}
\quad \text{ with } \quad
\# W_{23}=23.
\]
Observe that in Procedure~\ref{proc} branching occurs
at lines 14, 20. Thus we obtain \lq paths\rq\ through the
algorithm depending on the choice of $k \in \cK^*$
(line 14) or the choice of $\ww_i \in W_q^*$
(line 20). A path \lq dies\rq\ if the criterion of
line 1 is satisfied, or if $\cK^*$ (defined in line 11)
is empty, or if $W_q^*$ (defined in line 20) is
empty. Our program needs to check a total of $98$ paths.
Five of these terminate at line 1 with the condition
$\lambda(L_c)>2 \cB_f$ being satisfied, and the remaining $93$ paths terminate at line 11 with $\cK^*=\emptyset$.
%and the remaining 97 paths terminated at line 20
%with $W_q^*=\emptyset$.
Of the $5$ paths that terminate
at line 1, three of these yield a vector $\bb \in W_c+L_c$
satisfying $\lVert \bb \rVert_2 \le \cB_f$. These three vectors
are
\begin{gather*}
(-1, -1, -2, 0, 2, 0, 3, 0, 1, 1), \quad (0, 0, -1, 1, 1, 1, 1, 1, 0, 0),\\
\text{ and } \quad (-1, -1, -2, 3, 2, 5, 0, 0, 0, 0).
\end{gather*}
These vectors respectively lead to the solutions
\[
F(1,2)=3^3 \cdot 7 \cdot 11,\qquad
F(1,-1)=2 \cdot 3 \cdot 5, \qquad
F(1,1)=2^5.
\]

\bibliographystyle{abbrv}
\bibliography{samir}

\begin{thebibliography}{10}

\bibitem{AgrawalCoates}
M.~K. Agrawal, J.~H. Coates, D.~C. Hunt, and A.~J. van~der Poorten.
\newblock Elliptic curves of conductor {$11$}.
\newblock {\em Math. Comp.}, 35(151):991--1002, 1980.

\bibitem{BennettDahmen}
M.~A. Bennett and S.~R. Dahmen.
\newblock Klein forms and the generalized superelliptic equation.
\newblock {\em Ann. of Math. (2)}, 177(1):171--239, 2013.

\bibitem{BeGhKr}
M.~A. Bennett, A.~Gherga, and D.~Kreso.
\newblock An old and new approach to {G}oormaghtigh's equation.
\newblock {\em Trans. Amer. Math. Soc.}, 373(8):5707--5745, 2020.

\bibitem{BGPS}
M.~A. Bennett, A.~Gherga, V.~Patel, and S.~Siksek.
\newblock Odd values of the {R}amanujan tau function.
\newblock {\em Math. Ann.}, 382(1-2):203--238, 2022.

\bibitem{BeGhRe}
M.~A. Bennett, A.~Gherga, and A.~Rechnitzer.
\newblock Computing elliptic curves over {$\Bbb{Q}$}.
\newblock {\em Math. Comp.}, 88(317):1341--1390, 2019.

\bibitem{BeSiNew}
M.~A. Bennett and S.~Siksek.
\newblock Differences between perfect powers: prime power gaps.
\newblock {\em Algebra Number Theory}, 17(10):1789--1846, 2023.

\bibitem{BeSiLN}
M.~A. Bennett and S.~Siksek.
\newblock Differences between perfect powers: the {L}ebesgue-{N}agell equation.
\newblock {\em Trans. Amer. Math. Soc.}, 376(1):335--370, 2023.

\bibitem{BombieriTM}
E.~Bombieri.
\newblock On the {T}hue-{M}ahler equation.
\newblock In {\em Diophantine approximation and transcendence theory ({B}onn,
  1985)}, volume 1290 of {\em Lecture Notes in Math.}, pages 213--243.
  Springer, Berlin, 1987.

\bibitem{Magma}
W.~Bosma, J.~Cannon, and C.~Playoust.
\newblock The {M}agma algebra system. {I}. {T}he user language.
\newblock volume~24, pages 235--265. 1997.
\newblock Computational algebra and number theory (London, 1993).

\bibitem{BruinStollMWI}
N.~Bruin and M.~Stoll.
\newblock Deciding existence of rational points on curves: an experiment.
\newblock {\em Experiment. Math.}, 17(2):181--189, 2008.

\bibitem{BruinStollMWII}
N.~Bruin and M.~Stoll.
\newblock The {M}ordell-{W}eil sieve: proving non-existence of rational points
  on curves.
\newblock {\em LMS J. Comput. Math.}, 13:272--306, 2010.

\bibitem{BGTM}
Y.~Bugeaud and K.~Gy\H{o}ry.
\newblock Bounds for the solutions of {T}hue-{M}ahler equations and norm form
  equations.
\newblock {\em Acta Arith.}, 74(3):273--292, 1996.

\bibitem{BG}
Y.~Bugeaud and K.~Gy\H{o}ry.
\newblock Bounds for the solutions of unit equations.
\newblock {\em Acta Arith.}, 74(1):67--80, 1996.

\bibitem{BMS1}
Y.~Bugeaud, M.~Mignotte, and S.~Siksek.
\newblock Classical and modular approaches to exponential {D}iophantine
  equations. {I}. {F}ibonacci and {L}ucas perfect powers.
\newblock {\em Ann. of Math. (2)}, 163(3):969--1018, 2006.

\bibitem{IntegralHyp}
Y.~Bugeaud, M.~Mignotte, S.~Siksek, M.~Stoll, and S.~Tengely.
\newblock Integral points on hyperelliptic curves.
\newblock {\em Algebra Number Theory}, 2(8):859--885, 2008.

\bibitem{Cangul}
I.~N. Cang\"{u}l, M.~Demirci, G.~Soydan, and N.~Tzanakis.
\newblock On the {D}iophantine equation {$x^2+5^a\cdot 11^b=y^n$}.
\newblock {\em Funct. Approx. Comment. Math.}, 43(part 2):209--225, 2010.

\bibitem{CoatesTM}
J.~Coates.
\newblock An effective {$p$}-adic analogue of a theorem of {T}hue. {II}. {T}he
  greatest prime factor of a binary form.
\newblock {\em Acta Arith.}, 16:399--412, 1969/70.

\bibitem{CohenAdvanced}
H.~Cohen.
\newblock {\em Advanced topics in computational number theory}, volume 193 of
  {\em Graduate Texts in Mathematics}.
\newblock Springer-Verlag, New York, 2000.

\bibitem{EvertseTM}
J.-H. Evertse.
\newblock On equations in {$S$}-units and the {T}hue-{M}ahler equation.
\newblock {\em Invent. Math.}, 75(3):561--584, 1984.

\bibitem{FinckePohst}
U.~Fincke and M.~Pohst.
\newblock Improved methods for calculating vectors of short length in a
  lattice, including a complexity analysis.
\newblock {\em Math. Comp.}, 44(170):463--471, 1985.

\bibitem{Homero}
H.~R. Gallegos-Ruiz.
\newblock {$S$}-integral points on hyperelliptic curves.
\newblock {\em Int. J. Number Theory}, 7(3):803--824, 2011.

\bibitem{Hambrook}
K.~D. Hambrook.
\newblock {\em Implementation of a Thue-Mahler equation solver}.
\newblock PhD thesis, University of British Columbia, 2011.

\bibitem{KimTM}
D.~Kim.
\newblock A modular approach to cubic {T}hue-{M}ahler equations.
\newblock {\em Math. Comp.}, 86(305):1435--1471, 2017.

\bibitem{LLL}
A.~K. Lenstra, H.~W. Lenstra, Jr., and L.~Lov\'{a}sz.
\newblock Factoring polynomials with rational coefficients.
\newblock {\em Math. Ann.}, 261(4):515--534, 1982.

\bibitem{Mahler}
K.~Mahler.
\newblock Zur {A}pproximation algebraischer {Z}ahlen. {I}.
\newblock {\em Math. Ann.}, 107(1):691--730, 1933.

\bibitem{Matveev}
E.~M. Matveev.
\newblock An explicit lower bound for a homogeneous rational linear form in
  logarithms of algebraic numbers. {II}.
\newblock {\em Izv. Ross. Akad. Nauk Ser. Mat.}, 64(6):125--180, 2000.

\bibitem{Smart}
N.~P. Smart.
\newblock {\em The algorithmic resolution of {D}iophantine equations},
  volume~41 of {\em London Mathematical Society Student Texts}.
\newblock Cambridge University Press, Cambridge, 1998.

\bibitem{SoTz}
G.~Soydan and N.~Tzanakis.
\newblock Complete solution of the {D}iophantine equation {$x^2+5^a\cdot
  11^b=y^n$}.
\newblock {\em Bull. Hellenic Math. Soc.}, 60:125--151, 2016.

\bibitem{Thue}
A.~Thue.
\newblock \"{U}ber {A}nn\"{a}herungswerte algebraischer {Z}ahlen.
\newblock {\em J. Reine Angew. Math.}, 135:284--305, 1909.

\bibitem{TW}
N.~Tzanakis and B.~M.~M. de~Weger.
\newblock On the practical solution of the {T}hue equation.
\newblock {\em J. Number Theory}, 31(2):99--132, 1989.

\bibitem{VinogradovSprindzuk}
A.~I. Vinogradov and V.~G. Sprind\v{z}uk.
\newblock The representation of numbers by binary forms.
\newblock {\em Mat. Zametki}, 3:369--376, 1968.

\bibitem{KanelMatschke}
R.~von K\"{a}nel and B.~Matschke.
\newblock Solving {$S$}-unit, {M}ordell, {T}hue, {T}hue--{M}ahler and
  {G}eneralized {R}amanujan--{N}agell {E}quations via the {S}himura--{T}aniyama
  {C}onjecture.
\newblock {\em Mem. Amer. Math. Soc.}, 286(1419), 2023.

\bibitem{Yu}
K.~Yu.
\newblock {$p$}-adic logarithmic forms and group varieties. {III}.
\newblock {\em Forum Math.}, 19(2):187--280, 2007.

\end{thebibliography}

\end{document}